\title[Large data global solutions for the cubic Dirac equation]%
{On the cubic Dirac equation with potential
and the Lochak--Majorana condition}
\numberwithin{equation}{section}
\newtheorem{theorem}{Theorem}[section]
\newtheorem{corollary}[theorem]{Corollary}
\newtheorem{lemma}[theorem]{Lemma}
\newtheorem{proposition}[theorem]{Proposition}
\theoremstyle{remark}
\newtheorem{remark}[theorem]{Remark}
\theoremstyle{definition}
\newtheorem{definition}[theorem]{Definition}
\newcommand{\bra}[1]{\langle #1 \rangle}
\newcommand{\one}[1]{\mathbf{1}_{#1}}
\newcommand{\R}{\mathbb{R}}
\newcommand{\C}{\mathbb{C}}
\newcommand{\D}{\mathcal{D}}
\newcommand{\Si}{\mathbb{S}}
\newcommand{\eps}{\epsilon}
\date{\today}
\author[P.~D'Ancona]{Piero D'Ancona}
\address{Piero D'Ancona: 
Dipartimento di Matematica\\
Sapienza Universit\`{a} di Roma\\
Piazzale A.~Moro 2\\
00185 Roma\\
Italy}
\email{dancona@mat.uniroma1.it}
\author[M.~Okamoto]{Mamoru Okamoto}
\address{Mamoru Okamoto:
Division of Mathematics and Physics\\
Faculty of Engineering\\
Shinshu University\\
4-17-1 Wakasato\\
Nagano City 380-8553\\
Japan}
\email{m\_okamoto@shinshu-u.ac.jp}
\thanks{%
}
\subjclass[2010]{%
35Q41, 35A01}
\keywords{%
Nonlinear Dirac equation%
; Strichartz estimates%
; Dispersive equations%
; Resolvent estimates%
}
\begin{document}

\begin{abstract}
  We study a cubic Dirac equation on $\mathbb{R}\times\mathbb{R}^{3}$
  \begin{equation*}
    i \partial _t u + \D u  + V(x) u = 
    \langle \beta u,u \rangle \beta u
  \end{equation*}
  perturbed by a large potential with almost critical regularity. We prove global existence and scattering for small initial data in $H^{1}$ with additional angular regularity. The main tool is an endpoint Strichartz estimate for the perturbed Dirac flow. In particular, the result covers the case of spherically symmetric data with small $H^{1}$ norm.
  
  When the potential $V$ has a suitable structure, we prove global existence and scattering for \emph{large} initial data having a small chiral component, related to the Lochak--Majorana condition.
\end{abstract}

\maketitle

%%% >>> INDEX (toc)
%\tableofcontents

% e_f_pre  <<<<<<<<< PREAMBOLO

\section{Introduction}

We consider the Cauchy problem for a 
cubic Dirac equation with potential
\begin{equation} \label{eq:Dirac}
  i \partial _t u + \D u  + V(x) u = 
  \langle \beta u,u \rangle \beta u, \qquad
  u(0,x) = u_0(x).
\end{equation}
in an unknown function
$u = u(t,x) : \R \times \R ^3 \rightarrow \C ^4$,
with initial data $u_{0}:\mathbb{R}^{3}\to \mathbb{C}^{4}$.
Here $\langle\cdot , \cdot\rangle$ is the $\C^4$ inner product, 
$\mathcal{D}$ is the Dirac operator defined by
\begin{equation*}
  \textstyle
  \mathcal{D}=i^{-1}\sum _{j=1}^3 \alpha _j \partial _j
  =
  i^{-1}\alpha \cdot \partial,
  \qquad
  \alpha=(\alpha_{1},\alpha_{2},\alpha_{3})
\end{equation*}
where $\partial=(\partial_{1},\partial_{2},\partial_{3})$
are the partial derivatives,
and $\beta$, $\alpha _j$ are the Dirac matrices
\begin{equation*}
  \scriptstyle
  \beta=
  \begin{pmatrix} 1 & 0 & 0 & 0 \\ 0 & 1 & 0 & 0 \\ 0 & 0 & -1 & 0 \\ 0 & 0 & 0 & -1 \end{pmatrix},
  \ 
  \alpha_1 = \begin{pmatrix} 0 & 0 & 0 & 1 \\ 0 & 0 & 1 & 0 \\ 0 & 1 & 0 & 0 \\ 1 & 0 & 0 & 0 \end{pmatrix},
  \ 
  \alpha_2 = \begin{pmatrix} 0 & 0 & 0 & -i \\ 0 & 0 & i & 0 \\ 0 & -i & 0 & 0 \\ i & 0 & 0 & 0 \end{pmatrix},
  \ 
  \alpha_3 = \begin{pmatrix} 0 & 0 & 1 & 0 \\ 0 & 0 & 0 & -1 \\ 1 & 0 & 0 & 0 \\ 0 & -1 & 0 & 0 \end{pmatrix}.
\end{equation*}
We recall the basic anticommuting relations
\begin{gather*}
  \alpha _j ^{\ast}  = \alpha _j,\quad
  \beta ^{\ast}  = \beta, \quad
  \beta ^2 = I_4, \quad
  \beta \alpha _j + \alpha _j \beta = 0 
  \quad\text{for $j =1, 2, 3$},
  \\
  \alpha _j \alpha _k + \alpha _k \alpha _j = 2 \delta _{jk} I_4 
  \quad\text{for $j,k =1, 2 , 3$},
\end{gather*}
where $M^{*}$ is the conjugate transpose of the matrix $M$,
$\delta _{jk}$ the Kronecker delta and $I_4$ the $4 \times 4$
identity matrix.

Concerning the potential $V(x):\mathbb{R}^{3}\to M_{4}(\mathbb{C})$,
we decompose it in the form
\begin{equation}\label{eq:decompV}
  V=\sum_{j=1}^{3}A_{j}(x)\alpha_{j}+A_{0}(x)\beta +V_{0}(x)=
  A \cdot \alpha +A_{0}\beta+V_{0}
\end{equation}
where the \emph{magnetic potential} $A$, the 
\emph{pseudoscalar potential} $A_{0}$ and $V_{0}$ are such that
\begin{equation*}
  A=(A_{1},A_{2},A_{3}):\mathbb{R}^{3}\to \mathbb{R}^{3},
  \qquad
  A_{0}:\mathbb{R}^{3}\to \mathbb{R},
  \qquad
  V_{0}=V_{0}^{*}:\mathbb{R}^{3}\to M_{4}(\mathbb{C}).
\end{equation*}
The \emph{magnetic field} associated to the
potential $A$ will be denoted by
\begin{equation}\label{eq:magf}
  B=[B_{jk}]_{j,k=1}^{3},
  \qquad
  B_{jk}=\partial_{j}A_{k}-\partial_{k}A_{j},
  \qquad
  j,k=1,2,3.
\end{equation}

The first goal of the paper is to study the dispersive
properties of the Dirac flow perturbed by a \emph{large} 
potential, and to prove several
smoothing and (endpoint) Strichartz estimates for it.
We then apply the estimates to prove
the global existence of small solutions for the nonlinear
equation \eqref{eq:Dirac},
for $H^{1}$ initial data with additional
angular regularity, in the spirit of \cite{MNNO} and 
\cite{CacDAn}. Moreover, if the potential has an additional
structure, we are able to reduce the smallness assumption to
smallness of the chiral component of the initial data;
to this end we exploit the Lochak--Majorana condition.

A crucial but natural assumption concerns the absence of a
resonance at 0 for the operator $\mathcal{D}+V$.
It is well known that in presence of
a resonance the dispersive proerties of the flow 
deteriorate. For the Dirac equation with potential, the natural
notion is the following:

\begin{definition}[Resonance at 0]\label{def:resonDirac}
  We say that $0$ is a \emph{resonance} for the operator
  $\mathcal{D}+V$ if there exists 
  $v\in H^{1}_{loc}(\mathbb{R}^{3}\setminus0)$
  solution of $(\mathcal{D}+V) v=0$
  such that $|x|^{-\frac 12-\sigma}v\in L^{2}$
  for all $\sigma$ in a right nbd of 0;
  $v$ is called a \emph{resonant state}.
\end{definition}

In order to state the results we introduce the dyadic norms
\begin{equation}\label{eq:dyadicsp}
  \|v\|_{\ell^{p}L^{q}}
  :=
  \Bigl(
  \sum_{j\in \mathbb{Z}}
    \|v\|_{L^{q}(2^{j}\le|x|<2^{j+1})}^{p}
  \Bigr)^{1/p},
\end{equation}
with obvious modification when $p=\infty$.
More generally, we denote the
mixed radial--angular $L^{q}L^{r}$ norms on a
spherical ring $C= \{R_{1}\le|x|\le R_{2} \}$ with
\begin{equation*}
  \|v\|_{L^{q}_{|x|}L^{r}_{\omega}(C)}=
  \|v\|_{L^{q}L^{r}(C)}:=
  \textstyle
  (\int_{R_{1}}^{R_{2}}
    (\int_{|x|=\rho}|v|^{r}dS)^{q/r}d\rho)^{1/q}.
\end{equation*}
and we define for all $p,q,r\in[1,\infty]$
\begin{equation}\label{eq:lpLp}
  \|v\|_{\ell^{p}L^{q}L^{r}}
  :=
  \|\{\|v\|_{L^{q}L^{r}(2^{j}\le|x|<2^{j+1})}\}
  _{j\in \mathbb{Z}}\|_{\ell^{p}}.
\end{equation}
Clearly, when $q=r$ we have simply
$ \|v\|_{\ell^{p}L^{q}L^{q}}=\|v\|_{\ell^{p}L^{q}}$.
In the following we shall also need mixed space--time norms,
and to avoid confusion we shall always write $L^{p}_{t}$
with an explicit index $t$,
the $L^{p}$ norms with respect to time variable.
Thus we write
\begin{equation*}
  \textstyle
  \|u\|_{L^{p}_{t}L^{q}L^{r}}
  :=
  \|u\|_{L^{p}_{t}L^{q}_{|x|}L^{r}_{\omega}}
  =
  \Bigl(
    \int\|u(t,x)\|_{L^{q}_{|x|}L^{r}_{\omega}}^{p}
    dt
  \Bigr)^{1/p}
\end{equation*}
Angular regularity will be expressed via fractional powers of
the Laplace--Beltrami operator on the sphere $\mathbb{S}^{2}$
\begin{equation*}
  \Lambda^s_{\omega} := (1-\Delta_{\Si^2})^{\frac{s}{2}}.
\end{equation*}

We shall impose several
decay and smoothness conditions on the potential $V$.
The minimal set of assumptions is the following.

\medskip
\textbf{Condition (V)}.
The operator $\mathcal{D}+V$ 
is selfadjoint on $L^{2}(\mathbb{R}^{3};\mathbb{C}^{4})$
with domain $H^{1}(\mathbb{R}^{3})$,
0 is not an resonance,
and $V$ satisfies (see \eqref{eq:decompV}, \eqref{eq:magf})
$A\in \ell^{\infty}L^{3}$, and
\begin{equation}\label{eq:assIIdirac}
  \||x|V_{0}\|_{\ell^{1}L^{\infty}}< \sigma
\end{equation}
\begin{equation}\label{eq:assIdirac}
  |V|^{2}+|\mathcal{D}V|\lesssim|x|^{-2-\delta},
  \quad
  |x||B|+
  |x|^{2}(|V|^{2}+|\mathcal{D}V|+|\mathcal{D}V_{0}|)
  \in \ell^{1}L^{\infty},
\end{equation}
for some $\delta>0$
(recall \eqref{eq:magf}).
\smallskip

The decay properties of the flow $e^{it(\mathcal{D}+V)}$
are summarized in Theorem \ref{the:smooDV} below.
For the following statement, we fix a 
radially symmetric weight function
$\rho\in \ell^{2}L^{\infty}$ such that $\rho^{-2}|x|$
is in the Muckenhoupt class $A_{2}$ on $\mathbb{R}^{3}$;
possible explicit examples for $\rho$ are 
\begin{equation*}
  \rho=|x|^{\epsilon}
  \ \text{for}\ |x|\le1
  \qquad
  \rho=|x|^{-\epsilon}
  \ \text{for}\ |x|\ge1
  \qquad
\end{equation*}
for some $e>0$ small, or also
\begin{equation*}
  \rho=\bra{\log|x|}^{-\nu}
\end{equation*}
for some $\nu>1/2$.
Recall that a locally integrable function $w>0$ is in $A_{2}$
if its averages over arbitrary balls $B$ satisfy 
$\fint_{B}w \cdot\fint_{B} w^{-1}\lesssim1$.

\begin{theorem}[Linear decay estimates]\label{the:smooDV}
  \leavevmode
  
  \noindent
  (i) (Smoothing estimate) 
  If Condition (V) holds with $\sigma$ small enough, then
  \begin{equation}\label{eq:smooDiracV0}
    \|\rho|x|^{-1/2} e^{it(\mathcal{D}+V)}f\|_{L^{2}_{t}L^{2}}
    \lesssim
    \|f\|_{L^{2}}.
  \end{equation}
  % \begin{equation}\label{eq:smooDiracVnh0}
  %   \textstyle
  %   \||x|^{-1/2}\rho\int_{0}^{t}e^{i(t-t')(\mathcal{D}+V)}F(t')dt'
  %   \|_{L^{2}_{t}L^{2}}
  %   \lesssim
  %   \||x|^{1/2}\rho^{-1}F\|_{L^{2}_{t}L^{2}}.
  % \end{equation}
  % If in addition for some $\rho\in \ell^{2}L^{\infty}$
  % one has
  % $\rho^{-2}|x|^{2}|\partial V|\in L^{\infty}$,
  % then
  % \begin{equation}\label{eq:smooDiracVD0}
  %   \||x|^{-1/2}\rho\ \partial e^{it(\mathcal{D}+V)}f\|
  %   _{L^{2}_{t}L^{2}}
  %   \lesssim
  %   \|f\|_{H^{1}}.
  % \end{equation}

  \noindent
  (ii) (Endpoint Strichartz estimate)
  If Condition (V) holds with $\sigma$ small enough, 
  and in addition
  $\rho^{-2}|x|(|V|+|\partial V|)\in L^{\infty}$,
  then
  \begin{equation}\label{eq:strichDV0}
    \|
    e^{it(\mathcal{D}+V)}f\|_{L^{2}_{t}L^{\infty}L^{2}}
    \lesssim
    \|
    f\|_{H^{1}}.
  \end{equation}

  \noindent
  (iii) (Estimates with angular regularity)
  Let $V$ be of the form $V=A_{0}\beta+V_{0}$,
  satisfying Condition (V).
  Assume in addition that for some $1<s\le2$
  \begin{equation}\label{eq:assVang}
    \rho^{-2}|x|\|\Lambda^{s}V_{0}(|x| \omega)\|
    _{L^{2}_{\omega}(\mathbb{S}^{2})}\le \sigma,
    \qquad
    \rho^{-2}|x|
    \|\Lambda^{s}\partial V(|x| \omega)\|
    _{L^{2}_{\omega}(\mathbb{S}^{2})}
    \in L^{\infty},
  \end{equation}
  \begin{equation}\label{eq:assVangA0}
    \rho^{-2}|x|
    \|x\wedge \partial A_{0}(|x| \omega)\|
    _{L^{\infty}_{\omega}(\mathbb{S}^{2})}
    +
    \rho^{-2}\bra{x}
    \|\Delta_{\mathbb{S}} A_{0}(|x| \omega)\|
    _{L^{\infty}_{\omega}(\mathbb{S}^{2})}    
    \in L^{\infty}
  \end{equation}
  with $\sigma$ small enough.
  Then we have
  \begin{equation}\label{eq:strichDVla0}
    \|
    \Lambda^{s}_{\omega}
    e^{it(\mathcal{D}+V)}f\|_{L^{2}_{t}L^{\infty}L^{2}}
    +
    \|
    \Lambda^{s}_{\omega}
    e^{it(\mathcal{D}+V)}f\|_{L^{2}_{t}H^{1}}
    \lesssim
    \|
    \Lambda^{s}_{\omega}
    f\|_{H^{1}},
  \end{equation}
\end{theorem}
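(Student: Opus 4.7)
The overarching strategy is to reduce each estimate to the corresponding free-flow estimate and absorb the potential perturbatively. For part (i), I would use the stationary (resolvent) formulation: by Kato's $TT^{\ast}$ principle, the smoothing bound \eqref{eq:smooDiracV0} is equivalent to the uniform weighted resolvent estimate
\begin{equation*}
  \sup_{\lambda\in\mathbb{R},\,\eps>0}
  \|\rho|x|^{-1/2}(\mathcal{D}+V-\lambda\pm i\eps)^{-1}\rho|x|^{-1/2}\|
  _{L^{2}\to L^{2}}<\infty.
\end{equation*}
The analogous bound for the free operator $\mathcal{D}$ is known from the classical smoothing theory for the Dirac equation. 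I would then invoke the resolvent identity $R_{V}=R_{0}-R_{0}VR_{V}$, sandwich with the weights, and use the decomposition $V=A\cdot\alpha+A_{0}\beta+V_{0}$. The contributions of $A_{0}\beta$ and $V_{0}$ are absorbed via the smallness of $\||x|V_{0}\|_{\ell^{1}L^{\infty}}$ together with the pointwise decay in \eqref{eq:assIdirac}. The magnetic term $A\cdot\alpha$ is the hard one and must be handled by an integration by parts that converts $A$ into its curl $B$, so that the $\ell^{1}L^{\infty}$ smallness of $|x||B|$ can be activated. The absence of a zero resonance (Definition \ref{def:resonDirac}) is what guarantees the uniform bound extends down to the spectral threshold $\lambda=0$.

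For part (ii) the plan is to combine the free endpoint Strichartz estimate $\|e^{it\mathcal{D}}f\|_{L^{2}_{t}L^{\infty}L^{2}}\lesssim\|f\|_{H^{1}}$ (which follows from the analogous wave-equation estimate after spinorial factorization) with the Duhamel representation
\begin{equation*}
  e^{it(\mathcal{D}+V)}f
  =e^{it\mathcal{D}}f
  -i\int_{0}^{t}e^{i(t-s)\mathcal{D}}Ve^{is(\mathcal{D}+V)}f\,ds.
\end{equation*}
The first term is handled by the free estimate. For the Duhamel integral I would apply the dual endpoint Strichartz, which reduces the task to bounding $V e^{is(\mathcal{D}+V)}f$ in a weighted space--time $L^{2}_{t,x}$ norm. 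Here the extra hypothesis $\rho^{-2}|x|(|V|+|\partial V|)\in L^{\infty}$ factorises $V$ as a Kato-type potential sandwiched between two copies of the smoothing weight $\rho|x|^{-1/2}$; the smoothing estimate from (i) then closes the loop and gives \eqref{eq:strichDV0}, with the loss of one derivative recorded on the right-hand side stemming from the derivative of $V$ needed when integrating by parts against the Duhamel kernel.

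For part (iii), I would commute $\Lambda^{s}_{\omega}$ through the perturbed evolution. Since by hypothesis $V=A_{0}\beta+V_{0}$ has no magnetic part, the free operator $\mathcal{D}$ and the angular derivative $\Lambda^{s}_{\omega}$ almost commute, the obstruction being proportional to $x\wedge\partial A_{0}$ and $\Delta_{\mathbb{S}^{2}}A_{0}$\,--- precisely the quantities controlled by \eqref{eq:assVangA0}. The commutator $[V_{0},\Lambda^{s}_{\omega}]$ is in turn bounded using the angular Sobolev regularity of $V_{0}$ assumed in \eqref{eq:assVang}. After these commutations, $\Lambda^{s}_{\omega}u$ solves a perturbed Dirac equation of the same type as the original one with a source term whose weighted $L^{2}_{t,x}$ norm is controlled by a small multiple of the left-hand side of \eqref{eq:strichDVla0}; applying (i) and (ii) to this equation and bootstrapping produces the desired estimate.

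The main obstacle throughout is the magnetic contribution $A\cdot\alpha$ in step (i). As a zeroth-order matrix multiplier its natural scale-invariant space is $L^{3}$, which provides no smallness from decay; thus a direct perturbative treatment of $A$ alone is impossible. The integration by parts that trades $A$ for the magnetic field $B=dA$ is the decisive step, and the smallness assumption $|x||B|\in\ell^{1}L^{\infty}$ in \eqref{eq:assIdirac} is specifically tailored for this. Once (i) is established, steps (ii) and (iii) follow the standard perturbative pattern of Duhamel $+$ smoothing $+$ commutator estimates described above.
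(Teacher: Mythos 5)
Your plan for part (i) diverges fundamentally from what the paper actually does, and the missing ideas are not cosmetic. The paper does \emph{not} estimate the Dirac resolvent $(\mathcal{D}+V-z)^{-1}$ directly. It first squares the operator, $L=(\mathcal{D}+V)^{2}=-I_{4}\Delta+V^{2}+\mathcal{D}V+V\mathcal{D}$, obtaining a system whose principal part is the scalar Laplacian, and proves the uniform weighted resolvent estimate for $L$ (Proposition \ref{pro:invert}, Proposition \ref{pro:resestlarge}). The passage back to the Dirac flow goes through the spectral projections $P_{\pm}$ and the identity $L_{\pm}^{1/2}P_{\pm}=\pm(\mathcal{D}+V)P_{\pm}$. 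Squaring is not an optional simplification: it is what turns $A\cdot\alpha$ into the magnetic Laplacian $\Delta_{A}$, and only then does the Morawetz multiplier $\overline{[\Delta_{A},\psi]w}$ naturally produce the magnetic field $B$ through the term $I_{B}$ in \eqref{eq:fundid}. Your ``integration by parts that converts $A$ into its curl $B$'' is asserted without a mechanism at the first-order level; I do not see how such an integration by parts goes through for the matrix operator $\mathcal{D}+A\cdot\alpha$, where $A\cdot\alpha$ is a zeroth-order (not differential) perturbation.

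A second genuine gap: the potential in Condition (V) is \emph{large}. Only the $V_{0}$-part carries a smallness constant $\sigma$; the pseudoscalar part $A_{0}\beta$ may be arbitrarily large, and so may the magnetic field (after a rescaling, only $\||x|\widehat{B}\|_{\ell^1 L^\infty}$ enters, and it is balanced against $\lambda$, not required small). A straight Neumann-series or Born expansion from the resolvent identity $R_{V}=R_{0}-R_{0}VR_{V}$ cannot be closed in such a regime. The paper instead splits into two frequency regimes: for $|\Re z|$ large, the multiplier method works because the large long-range part $W_{L}$ can be absorbed against $\lambda\|v\|_{\dot{Y}}^{2}$ (this is the content of \eqref{eq:constrconst}); for $|\Re z|$ bounded, it uses the Lippmann--Schwinger equation $R=R_{0}(I-K(z))^{-1}$, compactness of $K(z)$ on $\dot{Y}^{*}$, and Fredholm theory, with the non-resonance hypothesis (through Lemma \ref{lem:eigv} and Lemma \ref{lem:reson2reson}) guaranteeing injectivity of $I-K(z)$ at $z=0$. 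You mention the no-resonance condition only in passing (``guarantees the uniform bound extends down to the threshold'') but without the Fredholm machinery there is no mechanism that activates it.

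For part (ii), the overall Duhamel + smoothing structure is correct, but one point needs care: you invoke ``the dual endpoint Strichartz,'' yet the genuine endpoint $(2,\infty)$ is false and there is no dual-endpoint estimate that maps a weighted $L^{2}_{t,x}$ source to $L^{2}_{t}L^{\infty}_{|x|}L^{2}_{\omega}$ for free. The paper proves precisely such a mixed Strichartz--smoothing estimate for the free flow, \eqref{eq:strfreesmoonew}, via spherical-harmonics expansion (Proposition \ref{pro:strfreesmoonew}), and that is the engine of the argument. For part (iii) your commutator picture is essentially right in spirit, but $\Lambda^{s}_{\omega}$ itself does not commute with $\mathcal{D}$; the paper replaces it by the spin--orbit operator $K=\beta(2S\cdot\Omega+1)$ (and its powers $|K|^{s}$), which does commute with $\mathcal{D}$ and with $\beta$, generating equivalent angular Sobolev norms, and then controls $[K,A_{0}\beta]$ and the $V_{0}$-commutator exactly as you envisage via \eqref{eq:assVangA0}, \eqref{eq:est:prod}, and \eqref{eq:assVang}.
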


Note that the estimates in (iii) require smallness of the 
magnetic potential $A$ (so that $A$ can be absorbed in $V_{0}$).
On the other hand, the pseudoscalar potential $A_{0}$
can still be large.
Note also that \eqref{eq:assVangA0}
is trivially satisfied if $A_{0}$ is a radially symmetric
function.
The estimates are proved, besides several others,
in Theorem \ref{the:smoooestD},
Corollary \ref{cor:smooang} and
Theorem \ref{the:strichDV}
of Sections \ref{sec:smoodirac}--\ref{sec:str_est}.

As an application of the previous estimates, we
prove the global existence and scattering for initial
data small in the $\Lambda^{-s}_{\omega}H^{1}$ norm.
In particular, the result applies to all spherically
symmetric data with small $H^{1}$ norm.
For simplicity we restrict ourselves to the standard 
nonlinearity \eqref{eq:Dirac}, but it is clear that 
the same proof applies to more general cubic nonlinearitiea.

\begin{theorem}[Global existence, small data]\label{the:GWPsmall}
  If $V=A_{0}\beta+V_{0}$ satisfies the
  assumptions of Theorem \ref{the:smooDV}--(iii), then
  there exists $\epsilon_{0}>0$ such that, for any initial
  data $u_{0}$ with 
  $\|\Lambda^{s}_{\omega}u_{0}\|_{H^{1}}\le \epsilon_{0}$,
  Problem \eqref{eq:Dirac} has a unique global solution
  $u\in C H^{1}\cap L^{2}L^{\infty}$ with
  $\Lambda_{\omega}^{s}u\in L^{\infty} H^{1}$.
  Moreover $u$ scatters to a free solution, i.e.,
  there exists $u_{+}\in \Lambda^{-s}_{\omega}H^{1}$
  such that
  \begin{equation*}
    \lim_{t \rightarrow \infty} 
    \| \Lambda^s_{\omega} u(t) - \Lambda^s_{\omega} e^{it(\D + V)} u_+ \|_{H^1} =0.
  \end{equation*}
  A similar result holds for $t\to-\infty$.
\end{theorem}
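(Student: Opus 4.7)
The plan is to run a contraction/Picard iteration on the Duhamel representation
\[
  \Phi(u)(t) = e^{it(\mathcal{D}+V)} u_0 - i \int_0^t e^{i(t-s)(\mathcal{D}+V)} N(u)(s)\,ds,
  \qquad N(u) := \langle \beta u, u\rangle \beta u,
\]
in a Banach space tailored to the linear estimates of Theorem \ref{the:smooDV}(iii). Specifically, I set
\[
  \|u\|_X := \|\Lambda^s_\omega u\|_{L^\infty_t H^1_x}
  + \|\Lambda^s_\omega u\|_{L^2_t L^\infty_{|x|} L^2_\omega}
  + \|\Lambda^s_\omega u\|_{L^2_t H^1_x},
\]
and look for a fixed point of $\Phi$ in the ball $B_R = \{u : \|u\|_X \le R\}$ with $R = 2 C \epsilon_0$.

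Since $e^{it(\mathcal{D}+V)}$ is a unitary group on $L^2$ which, by Condition (V) and the equivalence $\|(\mathcal{D}+V) f\|_{L^2} + \|f\|_{L^2} \sim \|f\|_{H^1}$, also preserves $H^1$, Theorem \ref{the:smooDV}(iii) immediately gives $\|e^{it(\mathcal{D}+V)} u_0\|_X \lesssim \|\Lambda^s_\omega u_0\|_{H^1} \le \epsilon_0$. Combining this with Minkowski's inequality in time yields the inhomogeneous bound
\[
  \left\|\int_0^t e^{i(t-s)(\mathcal{D}+V)} F(s)\,ds\right\|_X
  \lesssim \|\Lambda^s_\omega F\|_{L^1_t H^1_x}.
\]
It remains therefore to prove the trilinear estimate $\|\Lambda^s_\omega N(u)\|_{L^1_t H^1_x} \lesssim \|u\|_X^3$.

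The key ingredient is that, since $s > 1$, the space $H^s(\mathbb{S}^2)$ is a Banach algebra continuously embedded in $L^\infty(\mathbb{S}^2)$. Hence pointwise in $(t, r)$
\[
  \|\Lambda^s_\omega N(u)(t, r\omega)\|_{L^2_\omega}
  \lesssim \|\Lambda^s_\omega u(t, r\omega)\|_{L^2_\omega}^3,
\]
and Hölder in the $r^2 dr$ and time measures distributes this as
\[
  \|\Lambda^s_\omega N(u)\|_{L^1_t L^2_x}
  \lesssim \|\Lambda^s_\omega u\|_{L^2_t L^\infty_{|x|} L^2_\omega}^2
  \|\Lambda^s_\omega u\|_{L^\infty_t L^2_x}
  \le \|u\|_X^3.
\]
For the $H^1$-level component one uses $|\nabla N(u)| \lesssim |u|^2 |\nabla u|$ together with the decomposition $\nabla = \omega \partial_r + r^{-1} \nabla_{\mathbb{S}^2}$, exploiting that both $\partial_r$ and the tangential rotation fields commute with $\Lambda^s_\omega$; the same Hölder chain then produces the analogous bound with $\|\Lambda^s_\omega u\|_{L^\infty_t H^1_x}$ as the third factor. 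A parallel difference estimate gives the Lipschitz property of $\Phi$, and for $\epsilon_0$ small the standard contraction argument produces a unique fixed point in $B_R$.

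Scattering follows from the Cauchy criterion: the nonlinear bound applied over $(t, \infty)$ shows $\|\Lambda^s_\omega N(u)\|_{L^1((t,\infty); H^1)} \to 0$ as $t \to \infty$, so the limit
$u_+ := u_0 - i \int_0^\infty e^{-is(\mathcal{D}+V)} N(u)(s)\,ds$
exists in $\Lambda^{-s}_\omega H^1$ and yields the scattering statement. The main technical obstacle I foresee is precisely the $H^1$-level trilinear estimate with angular regularity, since $\Lambda^s_\omega$ does not commute with the Cartesian $\partial_j$ nor with $\mathcal{D}+V$; this is handled by the radial/angular decomposition of $\nabla$ described above, combined with the smallness built into Condition (V) to absorb the cross-terms coming from $[\nabla, V]$.
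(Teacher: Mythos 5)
Your proposal follows the same route as the paper: a Picard iteration in a Banach space built from the $L^\infty_t H^1_x$ and $L^2_t L^\infty_{|x|}L^2_\omega$ norms, with the trilinear estimate closed by the $H^s(\mathbb{S}^2)$ algebra property \eqref{eq:est:prod}. Two clarifications about your concluding paragraph are in order, because the ``main technical obstacle'' you describe is not actually present. First, you never need to commute $\Lambda^s_\omega$ through $\nabla$ in the nonlinear bound: $\nabla N(u)$ is, up to constants, a sum of products of the form $u\cdot u\cdot \partial_j u$, and \eqref{eq:est:prod} applies directly to those three scalar factors on the sphere, giving $\|\Lambda^s_\omega \nabla(v^3)\|_{L^2_\omega}\lesssim \|\Lambda^s_\omega v\|^2_{L^2_\omega}\|\Lambda^s_\omega \nabla v\|_{L^2_\omega}$ at each fixed radius without the radial/tangential decomposition $\nabla=\omega\partial_r+r^{-1}\nabla_{\mathbb{S}^2}$ and without the dangerous $r^{-1}$ weights that decomposition would introduce. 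Second, your remark about absorbing ``cross-terms from $[\nabla,V]$'' via the smallness in Condition (V) is a red herring: the potential $V$ never appears in the nonlinear contraction step. All the $V$-dependence, including the non-commutation of $\Lambda^s_\omega$ with $\mathcal{D}+V$, has already been absorbed into the linear estimates of Theorem \ref{the:smooDV}(iii) (via the spin--orbit operator $K$ in their proof), and once \eqref{eq:strichDVla0} and \eqref{eq:est:prod} are in hand the rest is a clean cubic fixed point. The extra $L^2_t H^1$ component you add to the $X$-norm is harmless but also unnecessary, as it plays no role in closing the estimate.
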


In our last result we construct a family of large
global solutions to Equation \eqref{eq:Dirac},
related to the so called \emph{Lochak--Majorana condition}
(see \cite{Loc}, \cite{Bac}).
To define the condition we introduce the subspace $E$ of 
$\mathbb{C}^{4}$ defined by
\begin{equation}\label{eq:defE}
  E:=
  \{z\in \mathbb{C}^{4}
  \colon
  z_{1}=\overline{z}_{4},
  z_{2}=-\overline{z}_{3}
  \}
  =
  \{z\in \mathbb{C}^{4}
  \colon
  \gamma z=\overline{z}
  \},
\end{equation}
where $\gamma$ is the matrix
\begin{equation}\label{eq:defgamma}
  \gamma:= \begin{pmatrix} 0 & 0 & 0 & 1 \\ 0 & 0 & -1 & 0 \\ 0 & -1 & 0 & 0 \\ 1 & 0 & 0 & 0 \end{pmatrix}.
\end{equation}
Then we have:

\begin{definition}[LM condition]\label{def:LM}
  We say that a function 
  $f(x)\in L^{2}(\mathbb{R}^{3};\mathbb{C}^{4})$
  satisfies the \emph{Lochak--Majorana condition} if
  \begin{equation*}
    f(x)\in E
    \quad\text{for a.e.}\quad x
  \end{equation*}
  (or more generally, if $\exists \theta\in \mathbb{R}$
  such that $e^{i \theta}f\in E$ for a.e.~$x$.)
\end{definition}
A few elementary facts 
will clarify the relevance of this definition:
\begin{itemize}
  \item The LM condition is preserved by the free Dirac flow:
  \begin{equation*}
    \quad\text{if}\  
    f\in E
    \ \text{a.e., then}\  
    e^{it \mathcal{D}}f\in E
    \ \text{a.e. for all}\  t.
  \end{equation*}
  \item A function $f$ satisfies LM iff its
  \emph{chiral invariant} $\rho(f)$ vanishes.
  The chiral invariant is the quantity
  \begin{equation*}
    \textstyle
    \rho(f):=
    |\langle \beta f,f\rangle|^{2}+
    |\langle \alpha_{5}f,f\rangle|^{2},
    \qquad
    \alpha_{5}=
    \left(
    \begin{smallmatrix}
       0& 0& -i& 0\\
       0& 0& 0& -i\\
       i& 0& 0& 0\\
       0& i& 0& 0
    \end{smallmatrix}
    \right).
  \end{equation*}
  \item As a consequence of the previous two facts,
  if the initial data $f$ satisfy LM, then the free flow
  $e^{it \mathcal{D}}f$ is also a solution of the cubic NLD
  \begin{equation*}
    iu_{t}+\mathcal{D}u=\langle \beta u,u\rangle \beta u
    \quad
    (\equiv0).
  \end{equation*}
\end{itemize}
Then a natural conjecture is that small perturbations of initial
data satisfying LM give rise to global large solution of the
cubic Dirac equation. This is indeed the case, as proved by
Bachelot \cite{Bac} for small perturbations in the $H^{6}$ norm.
If we introduce the projection 
$P:\mathbb{C}^{4}\to E$ given by
\begin{equation}\label{eq:defP}
  P
  \begin{pmatrix}
     z_{1}\\
     z_{2}\\
     z_{3}\\
     z_{4}
  \end{pmatrix}
  =
  \begin{pmatrix}
     z_{1}+z_{4}\\
     z_{2}-z_{3}\\
     \overline{z}_{3}-\overline{z}_{2}\\
     \overline{z}_{1}+\overline{z}_{4}
  \end{pmatrix}
\end{equation}
then Bachelot's condition on the initial data can be written
simply 
\begin{equation*}
  \|(I-P)f\|_{H^{6}}\ll1.
\end{equation*}

We shall proove that a similar situation occurs also in presence
of a potential $V$, provided $V$ has a suitable structure.
Denote by $\mathcal{V}$ the subspace of $4 \times 4$
complex matrices $M\in M_{4}(\mathbb{C})$ of the form
\begin{equation*}\label{eq:specialV}
  M=
  \begin{pmatrix}
    a & z & w & 0 \\
    \overline{z} & b & 0 & w \\
    \overline{w} & 0 & -b & z \\
    0 & \overline{w} & \overline{z} & -a
  \end{pmatrix}
  \quad \text{for some}\ 
  a,b\in \mathbb{R}
  \ \text{and}\ 
  z,w\in \mathbb{C}.
\end{equation*}
The space $\mathcal{V}$ can be characterized
in the following equivalent way:
\begin{equation}\label{eq:equivdefV}
  M\in \mathcal{V}
  \quad\iff\quad
  M=M^{*}
  \quad\text{and}\quad 
  \overline{M}\gamma=-\gamma M,
\end{equation}
where $\gamma$ is defined in \eqref{eq:defgamma}.
Note that the Dirac matrix $\beta$ belongs to $\mathcal{V}$,
thus if in the decomposition \eqref{eq:decompV} we assume
$A_{1}=A_{2}=A_{3}=0$ and $V_{0}(x)\in \mathcal{V}$,
we have $V(x)\in \mathcal{V}$ for all $x$.

We are in position to state our final result:

\begin{theorem}[Global existence, large data]\label{the:GWPlarge}
  Assume $V=A_{0}\beta+V_{0}$ satisfies the conditions of
  Theorem \ref{the:smooDV}--(iii)
  and in addition $V_{0}(x)\in \mathcal{V}$ for all $x$.

  Then there exists $\epsilon_{0}>0$ such that, for any
  data $u_{0}\in \Lambda^{-s}_{\omega}H^{1}$ with 
  $\|(I-P)\Lambda^{s}_{\omega}u_{0}\|_{H^{1}}\le \epsilon_{0}$,
  Problem \eqref{eq:Dirac} has a unique global solution
  $u\in C H^{1}\cap L^{2}L^{\infty}$ with
  $\Lambda_{\omega}^{s}u\in L^{\infty} H^{1}$;
  moreover $u$ scatters to a free solution, i.e.,
  there exists $u_{+}\in \Lambda^{-s}_{\omega}H^{1}$
  such that
  \begin{equation*}
    \lim_{t \rightarrow \infty} 
    \| \Lambda^s_{\omega} u(t) - \Lambda^s_{\omega} e^{it(\D + V)} u_+ \|_{H^1} =0.
  \end{equation*}
  A similar result holds for $t\to-\infty$.
\end{theorem}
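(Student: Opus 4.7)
The plan is to decompose the initial data as $u_{0}=P u_{0}+(I-P)u_{0}$, where $P u_{0}\in E$ satisfies the Lochak--Majorana condition and $(I-P)u_{0}$ is small by hypothesis, and to look for the solution in the form $u=v+w$ with
\begin{equation*}
v(t):=e^{it(\D+V)}(P u_{0}),
\qquad
w(0)=(I-P)u_{0}.
\end{equation*}
The first step is to prove that $v(t,x)\in E$ for almost every $(t,x)$. The hypothesis $V_{0}(x)\in\mathcal V$, the identity $\beta\in\mathcal V$ (which follows at once from $\bar\beta\gamma=-\gamma\beta$ by direct computation), and the reality of $A_{0}$ give $V(x)\in\mathcal V$, i.e.\ $\overline V\gamma=-\gamma V$; combined with the elementary identities $\overline{\alpha_{j}}\gamma=\gamma\alpha_{j}$ obtained from the explicit form of the Dirac matrices, differentiation in $t$ of the constraint $\overline u=\gamma u$ shows that the constraint is propagated by the linear flow $e^{it(\D+V)}$. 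Consequently $\langle\beta v,v\rangle\equiv 0$, and substituting $u=v+w$ into \eqref{eq:Dirac} yields the equation
\begin{equation*}
i\partial_{t}w+(\D+V)w=\bigl(2\,\Re\langle\beta v,w\rangle+\langle\beta w,w\rangle\bigr)\beta(v+w)=:\mathcal N(v,w),
\end{equation*}
whose right-hand side contains no pure $v$ term.

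The Duhamel form of the $w$-equation is then handled by a fixed point argument in the resolution space
\begin{equation*}
X:=\{\,w:\Lambda^{s}_{\omega}w\in L^{\infty}_{t}H^{1}\cap L^{2}_{t}L^{\infty}L^{2}\cap L^{2}_{t}H^{1}\,\},
\end{equation*}
endowed with the natural norm. The homogeneous piece $e^{it(\D+V)}(I-P)u_{0}$ is controlled in $X$ by $\|\Lambda^{s}_{\omega}(I-P)u_{0}\|_{H^{1}}\le\epsilon_{0}$ via Theorem~\ref{the:smooDV}--(iii). Using the Sobolev embedding on $\mathbb{S}^{2}$ for $s>1$, $\|f\|_{L^{\infty}_{\omega}}\lesssim\|\Lambda^{s}_{\omega}f\|_{L^{2}_{\omega}}$, together with the Leibniz rule distributing $\Lambda^{s}_{\omega}$ across the trilinear form, the pure-$w$ term $\langle\beta w,w\rangle\beta w$ gives a contribution cubic in $\|w\|_{X}$, while the mixed terms $\langle\beta w,w\rangle\beta v$ and $\Re\langle\beta v,w\rangle\beta w$ contribute at worst $\|v\|_{L^{2}_{t}L^{\infty}_{x}}\|w\|_{X}^{2}$.

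The delicate term is $\Re\langle\beta v,w\rangle\beta v$, which is \emph{linear in $w$} with a \emph{large} coefficient depending quadratically on $v$; it cannot be made small by shrinking $\|w\|_{X}$. The key observation is that Theorem~\ref{the:smooDV}--(iii) combined with the spherical Sobolev embedding yields
\begin{equation*}
\|v\|_{L^{2}_{t}L^{\infty}_{x}}\lesssim\|\Lambda^{s}_{\omega}v\|_{L^{2}_{t}L^{\infty}_{|x|}L^{2}_{\omega}}\lesssim\|\Lambda^{s}_{\omega}P u_{0}\|_{H^{1}}<\infty
\end{equation*}
globally in time, so that $|v|^{2}$ acts as a \emph{time-integrable} multiplicative perturbation, even though it is pointwise large. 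One then partitions $\mathbb R$ into finitely many intervals on which $\|v\|_{L^{2}_{t}L^{\infty}_{x}}^{2}$ is arbitrarily small, runs the contraction on each interval with the endpoint of the previous one as new initial datum, and concatenates; the bootstrap closes globally since only finitely many such intervals are needed. This handling of the large linear-in-$w$ interaction is the main obstacle and the principal new ingredient compared with the small-data argument of Theorem~\ref{the:GWPsmall}. Continuity in $H^{1}$, uniqueness, and the scattering claim $\|\Lambda^{s}_{\omega}(u(t)-e^{it(\D+V)}u_{+})\|_{H^{1}}\to 0$ then follow by standard arguments from the Duhamel representation and the global $L^{1}_{t}$ integrability of $\mathcal N(v,w)$ in $\Lambda^{-s}_{\omega}H^{1}$ established in the contraction step; the case $t\to-\infty$ is identical by time reversal.
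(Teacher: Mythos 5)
Your proposal is correct and follows essentially the same strategy as the paper's proof: decompose $u$ into the perturbed Dirac flow of $Pu_0$ (which stays in $E$, so the cubic term vanishes on it) plus a small remainder, then close a contraction on finitely many time intervals chosen so that the Strichartz norm of the Lochak--Majorana component is small on each, handling the linear-in-remainder term with large coefficient exactly as the paper does. The only minor variation is that you derive the invariance of $E$ by differentiating the pointwise constraint $\gamma v=\overline{v}$ along the \emph{linear} flow, whereas the paper reaches the same conclusion through the conservation laws of Lemma~\ref{lem:conserved} and Corollary~\ref{cor:vanish} for the \emph{nonlinear} flow; both routes rest on the identical algebraic identities for $\gamma$, $\alpha_{j}$ and $V\in\mathcal{V}$.
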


Since $u_0$ is not small, the Theorem implies the existence of global solutions and scattering for a suitable class of large data.
Note that the result depends heavily on
the special structure of the nonlinearity.
Indeed, if we replace the nonlinear term
$(\beta u, u) \beta u$ with $|u|^3 I_4$, it is possible to
construct data such that $P u_0 = 0$ and the solution blows up
in a finite time, even in the case $V(x)=0$ (see \cite{DAnOka}).
Note also that the static potential $A_{0}$ can be large.

There are many results for the cubic Dirac equation when $V(x)$ 
is the constant matrix $m \beta$, $m\ge0$
(see \cite{Can, Pec, EscVeg, MNNO, BejHer1, BejHer2, BouCan} 
and references therein). 
In particular, Machihara et al. \cite{MNNO} proved small data 
scattering in $H^1 (\R ^3)$ with some additional regularity 
in the angular variables; our paper is in part an
extension of theirs, and of \cite{CacDAn}, to the case of
a large potential depending on $x$.
Note that in the massless case
$\dot H^1(\R ^3)$ is the critical space for scaling.
The final results on the constant coefficient case are due to
Bejenaru and Herr \cite{BejHer1} and Bournaveas and Candy 
\cite{BouCan}, who proved small data scattering in $H^{1}$.

Global existence for large data is a much more difficult
problem, in part since the conserved Dirac energy is not 
positive definite.
In the one dimensional case, Candy \cite{Can} proved 
the global well-posedness by using the conservation 
of the $L^{2}$ mass only.
In the higher dimensional case, one does not expect
local well posedness in time for $L^{2}$ data, since
the critical norm is stronger. 

As mentioned above,
Bachelot \cite{Bac} showed global existence of large 
amplitude solutions, by assuming smallness only for
the Chiral invariant related to the Lochak-Majorana condition; 
taking $V=0$ in Theorem \ref{the:GWPlarge} we reobtain his result
and actually improve on his $H^{6}$ condition on the initial data.
Indeed, the main tool in \cite{Bac} was the
commutating vector field method, which requires rather high 
regularity of the data to be applied.
We finally recall that in \cite{CacDAn} a result similar
to Theorem \ref{the:GWPsmall} was proved, but only for
a \emph{small} potentias $V$.

The outline of the paper is the following.
Sections \ref{sec:smoodirac} and \ref{sec:str_est} are devoted to
dispersive estimates for the linear flow. In
Section \ref{sec:GWPsmall} we prove global existence
for small data, Theorem \ref{the:GWPsmall}.
In Section \ref{sec:conserved} we check that the chiral invariant
is preserved by the perturbed flow if the potential has the
appropriate structure, and we apply this result to prove
global existence of large solutions,
Theorem \ref{the:GWPlarge}, in the concluding
Section \ref{sec:GWPlarge}.

\section{Smoothing estimates for the perturbed Dirac system}
\label{sec:smoodirac}

We prove here a smoothing estimate for the operator
\begin{equation}\label{eq:diracop}
  \mathcal{D}+V,
  \qquad
  V=A \cdot \alpha+\beta V_{0}+V_{0}
\end{equation}
where $A=(A_{1},A_{2},A_{3}):\mathbb{R}^{3}\to \mathbb{R}^{3}$,
$A_{0}:\mathbb{R}^{3}\to \mathbb{R}$ and
$V_{0}=V_{0}^{*}:\mathbb{R}^{3}\to M_{4}(\mathbb{C})$.
The relevant spaces are the Banach spaces 
$\dot X,\dot Y, \dot Y^{*}$ with norms
\begin{equation*}
  \textstyle
  \|v\|_{\dot{X}}^2:=\sup_{R>0}\frac1{R^2}
  \int_{|x|=R}|v|^2dS
  \simeq
  \||x|^{-1}v\|^{2}_{\ell^{\infty}L^{\infty}L^{2}},
\end{equation*}
\begin{equation*}
  \textstyle
  \|v\|_{\dot{Y}}^2:=\sup_{R>0}\frac1{R}
  \int_{|x|\leq R}|v|^2dx
  \simeq
  \||x|^{-1/2}v\|_{\ell^{\infty}L^{2}}^{2},
  \qquad
  \|v\|_{\dot Y^{*}}\simeq
  \||x|^{1/2}v\|_{\ell^{1}L^{2}}.
\end{equation*}
Note that $\dot Y^{*}$ is the predual of $\dot Y$
and an homogeneous version
of the Agmon--H\"{o}rmander space $B$
(see \cite{agmhor}).
In the following statement, $B=[B_{jk}]_{j,k=1}^{3}$ denotes the
magnetic field, defined by
\begin{equation*}
  B_{jk}=\partial_{j}A_{k}-\partial_{k}A_{j}.
\end{equation*}

\begin{theorem}[Smoothing estimates for Dirac]\label{the:smoooestD}
  Assume Condition (V) is satisfied with $\sigma$
  small enough.
  Then the perturbed flow $e^{it (\mathcal{D}+V)}$
  satisfies: for any $\rho\in \ell^{2}L^{\infty}$,
  \begin{equation}\label{eq:smooDiracV}
    \|\rho|x|^{-1/2} e^{it(\mathcal{D}+V)}f\|_{L^{2}_{t}L^{2}_{x}}
    \lesssim
    \|\rho\|_{\ell^{2}L^{\infty}} \|f\|_{L^{2}},
  \end{equation}
  \begin{equation}\label{eq:smooDiracVnh}
    \textstyle
    \|\rho|x|^{-1/2}\int_{0}^{t}e^{i(t-s)(\mathcal{D}+V)}F(s)ds
    \|_{L^{2}_{t}L^{2}_{x}}
    \lesssim
    \|\rho\|_{\ell^{2}L^{\infty}}^{2}
    \|\rho^{-1}|x|^{1/2}F\|_{L^{2}_{t}L^{2}_{x}}.
  \end{equation}
  If in addition $V$ satisfies
  \begin{equation}\label{eq:Vderass}
    \rho^{-2}|x||\partial V|\le K <\infty
  \end{equation}
  then we have also the estimate
  \begin{equation}\label{eq:smooDiracVD}
    \|\rho|x|^{-1/2}\partial e^{it(\mathcal{D}+V)}f\|
    _{L^{2}_{t}L^{2}_{x}}
    \lesssim
    \|\rho\|_{\ell^{2}L^{\infty}}
    (1+\|\rho\|_{\ell^{2}L^{\infty}}^{2}K)
    \|f\|_{H^{1}}.
  \end{equation}
\end{theorem}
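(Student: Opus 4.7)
The plan is to deduce all three estimates from uniform weighted resolvent bounds via Kato's theory of smooth perturbations. The homogeneous smoothing estimate \eqref{eq:smooDiracV} is equivalent to
\begin{equation*}
\sup_{z\in \mathbb{C}\setminus \mathbb{R}}
\|\rho|x|^{-1/2}(\mathcal{D}+V-z)^{-1}|x|^{-1/2}\rho\|_{L^{2}\to L^{2}}
\lesssim \|\rho\|_{\ell^{2}L^{\infty}}^{2},
\end{equation*}
and the inhomogeneous version \eqref{eq:smooDiracVnh} then follows by a $TT^{*}$ argument combined with the Christ--Kiselev lemma to restore the retarded-time truncation. For the free Dirac operator the required bound is classical: since $(\mathcal{D}-z)(\mathcal{D}+z)=-\Delta-z^{2}$, the free Dirac resolvent factors through the Helmholtz resolvent, and the standard Agmon--Kato--Kuroda estimates on the Agmon--H\"ormander space $\dot Y$, refined dyadically to produce the factor $\|\rho\|_{\ell^{2}L^{\infty}}$, give exactly what is needed.

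To pass to the perturbed operator we use the second resolvent identity
\begin{equation*}
(\mathcal{D}+V-z)^{-1}=(\mathcal{D}-z)^{-1}\bigl(I+V(\mathcal{D}-z)^{-1}\bigr)^{-1},
\end{equation*}
so the task reduces to showing that $T(z):=I+V(\mathcal{D}-z)^{-1}$ is boundedly invertible on the weighted space, uniformly in $z\notin\mathbb{R}$. For $|z|$ large, $(\mathcal{D}-z)^{-1}$ is small and the inverse is built by Neumann series. For bounded $z$ one splits $V=V_{s}+V_{c}$ with $V_{s}$ carrying the smallness from \eqref{eq:assIIdirac}--\eqref{eq:assIdirac} (enough to close another Neumann series) and $V_{c}$ smooth and well localized, producing a compact operator, so that $T(z)$ is Fredholm of index zero. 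Invertibility of $T(\lambda\pm i0)$ at $\lambda=0$ is then the precise translation of Definition \ref{def:resonDirac}, while at $\lambda\neq 0$ it reduces to absence of embedded eigenvalues for $\mathcal{D}+V$, which follows from a virial/Carleman argument exploiting the decay bounds on $V$, $\mathcal{D}V$ and $B$ in \eqref{eq:assIdirac}.

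For the derivative estimate \eqref{eq:smooDiracVD} one commutes $\partial_{j}$ with $\mathcal{D}+V$: since $[\partial_{j},\mathcal{D}]=0$, we have $[\mathcal{D}+V,\partial_{j}]=-(\partial_{j}V)$, and Duhamel yields
\begin{equation*}
\partial_{j}e^{it(\mathcal{D}+V)}f
=e^{it(\mathcal{D}+V)}\partial_{j}f
+i\int_{0}^{t}e^{i(t-s)(\mathcal{D}+V)}(\partial_{j}V)\,e^{is(\mathcal{D}+V)}f\,ds.
\end{equation*}
Applying \eqref{eq:smooDiracV} to the first term and \eqref{eq:smooDiracVnh} to the second, with the pointwise bound $\rho^{-1}|x|^{1/2}|(\partial_{j}V)g|=\rho^{-2}|x||\partial V|\cdot \rho|x|^{-1/2}|g|\le K\,\rho|x|^{-1/2}|g|$ coming from \eqref{eq:Vderass}, followed by a final application of \eqref{eq:smooDiracV} to $e^{is(\mathcal{D}+V)}f$, produces exactly the constant $\|\rho\|_{\ell^{2}L^{\infty}}(1+\|\rho\|_{\ell^{2}L^{\infty}}^{2}K)$ on the right hand side.

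The main obstacle is the uniform invertibility of $T(z)$ as $z\to 0$: unlike the Schr\"odinger case, the free Dirac resolvent gains no decay at zero energy, so the Fredholm step must be carried out in a scale of weighted spaces carefully matched to the smoothing norm $\|\rho|x|^{-1/2}\cdot\|_{L^{2}}$, and showing that the non-resonance condition of Definition \ref{def:resonDirac} is exactly the triviality of $\ker T(0\pm i0)$ in this scale requires a precise identification of the spaces in which the limiting absorption principle can be run for $\mathcal{D}+V$.
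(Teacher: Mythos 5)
Your derivative-estimate argument for \eqref{eq:smooDiracVD} is exactly the paper's: differentiate the equation, apply \eqref{eq:smooDiracV}--\eqref{eq:smooDiracVnh} and absorb the commutator term $(\partial_j V)u$ via the pointwise bound coming from \eqref{eq:Vderass}. That part is correct.

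The approach to \eqref{eq:smooDiracV}--\eqref{eq:smooDiracVnh}, however, is genuinely different from the paper's and has a gap in the high-energy regime. You propose to work with the \emph{first-order} Dirac resolvent directly, expanding $(\mathcal{D}+V-z)^{-1}$ by the second resolvent identity and inverting $T(z)=I+V(\mathcal{D}-z)^{-1}$ by Neumann series when $|z|$ is large. The trouble is that the free Dirac resolvent $(\mathcal{D}-z)^{-1}=(\mathcal{D}+z)(-\Delta-z^2)^{-1}$, measured in the natural smoothing norms (say $\dot Y^*\to\dot Y$ or $\dot Y^*\to\dot X$), is $O(1)$ \emph{uniformly} in $z$; there is no analogue of the Schr\"odinger gain $|z|^{1/2}\|R_0(z)f\|_{\dot Y}\lesssim\|f\|_{\dot Y^*}$, because the extra factor $(\mathcal{D}+z)$ exactly cancels it. Hence $\|V(\mathcal{D}-z)^{-1}\|_{\dot Y^*\to\dot Y^*}$ does not become small as $|z|\to\infty$; it is comparable to $\||x|V\|_{\ell^1L^\infty}$ for all $z$. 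Condition (V) only imposes smallness on $V_0$ through \eqref{eq:assIIdirac}; the pseudoscalar $A_0$ and the magnetic $A$ may be \emph{large}, so the Neumann series does not close, and neither does the "small part $V_s$ plus compact part" decomposition at high energies, since the compact term is not a small perturbation there. This is not a detail one can finesse: the whole point of the result is to allow large potentials.

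The paper's route is to pass to the squared operator $L=(\mathcal{D}+V)^2=-\Delta I_4 + V^2+\mathcal{D}V+V\mathcal{D}$, a Schr\"odinger-type system, and to split the frequency range. At high frequencies, the Morawetz multiplier identity for $L$ produces the term $\lambda\|v\|_{\dot Y}^2$ on the favorable side of \eqref{eq:estlargela}, which dominates the contribution of the \emph{large} long-range part $W_L$ once $|\Re z|\gtrsim\Gamma^2+\||x|\widehat B\|^2_{\ell^1L^\infty}$; this is precisely the mechanism that cannot be reproduced on the first-order Dirac resolvent. At low and moderate frequencies the paper runs a Fredholm/limiting absorption argument for $L$ (your Fredholm step for bounded $z$ is analogous, but set up for $L$ rather than $\mathcal{D}+V$), and Lemma~\ref{lem:reson2reson} translates the non-resonance hypothesis on $\mathcal{D}+V$ into the required injectivity for $L$. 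Finally, and this is a step your sketch omits entirely, one must convert the Kato-smoothing bound for the \emph{Schr\"odinger} flow $e^{itL}$ into a bound for the \emph{Dirac} flow $e^{it(\mathcal{D}+V)}$: the paper does this by decomposing $L^2=\mathcal H_+\oplus\mathcal H_-$ with the spectral projections $P_\pm$ of $\mathcal{D}+V$, using $L_\pm^{1/2}P_\pm=\pm(\mathcal{D}+V)P_\pm$, and invoking the Schr\"odinger-to-wave transfer theorem (Theorem 2.4 of \cite{DAncona15-a}) for $e^{itL_\pm^{1/2}}$. Your direct approach would bypass this transfer if your resolvent estimate on $\mathcal{D}+V$ held, but as noted it does not close at high energies. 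On the other hand, your concern about lack of decay at zero energy for the Dirac resolvent is not the real obstruction; the free limiting absorption estimate is uniform down to $z=0$ in the $\dot Y^*/\dot Y/\dot X$ scale, and the zero-energy analysis (non-resonance of $\mathcal{D}+V$) is handled exactly as you guess.
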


Note that the additional condition \eqref{eq:Vderass}
is implied by Condition (V) for large $x$ and it
only restricts the singularity of $V$ near 0.

The proof of the Theorem is based on a resolvent estimate
for the squared operator $(\mathcal{D}+V)^{2}$.
This produces a system of stationary Schr\"{o}dinger equations with
diagonal principal part, as detailed in the following sections.
Two different methods are necessary
in order to handle the large frequency and the
short frequency regimes.

For the next result we need to assume that the magnetic
potential $A=(A_{1},A_{2},A_{3})$ is \emph{small}, while
the scalar potential $A_{0}$ may still be large.
By absorbing $A \cdot \alpha$ in the term $V_{0}$,
we see that
it is sufficient to consider a potential $V$ of the form
\begin{equation*}
  V(x)=A_{0}\beta+V_{0}.
\end{equation*}

\begin{corollary}[]\label{cor:smooang}
  Assume $V$ and $\mathcal{D}+V$ satisfy the conditions
  of the previous Theorem with $V$ of the special form
  \begin{equation*}
    V(x)=A_{0}\beta+V_{0}.
  \end{equation*}
  In addition, assume that for some $s\in(1,2]$
  and some $\rho\in \ell^{2}L^{\infty}$
  \begin{equation}\label{eq:assang}
    \rho^{-2}|x|\|\Lambda^{s}V_{0}(|x| \omega)\|
    _{L^{2}_{\omega}(\mathbb{S}^{2})}\le \epsilon,
    \qquad
    \rho^{-2}|x|
    \|\Lambda^{s}\partial V(|x| \omega)\|
    _{L^{2}_{\omega}(\mathbb{S}^{2})}
    \in L^{\infty}.
  \end{equation}
  \begin{equation}\label{eq:assangA0}
    \rho^{-2}|x|
    \|x\wedge \partial A_{0}(|x| \omega)\|
    _{L^{\infty}_{\omega}(\mathbb{S}^{2})}
    +
    \rho^{-2}\bra{x}
    \|\Delta_{\mathbb{S}} A_{0}(|x| \omega)\|
    _{L^{\infty}_{\omega}(\mathbb{S}^{2})}    
    \in L^{\infty}
  \end{equation}
  Then if $\epsilon$ is sufficiently small,
  the following estimates hold:
  \begin{equation}\label{eq:smooDiracVang}
    \|\rho|x|^{-1/2}
    \Lambda^{s}_{\omega}e^{it(\mathcal{D}+V)}f\|
    _{L^{2}_{t}L^{2}_{x}}
    \lesssim
    \|\Lambda^{s}_{\omega}f\|_{L^{2}},
  \end{equation}
  \begin{equation}\label{eq:smooDiracVangnh}
    \textstyle
    \|\rho|x|^{-1/2}
    \Lambda^{s}_{\omega}
    \int_{0}^{t}e^{i(t-s)(\mathcal{D}+V)}F(s)ds
    \|_{L^{2}_{t}L^{2}_{x}}
    \lesssim
    \|\rho^{-1}|x|^{1/2}\Lambda^{s}_{\omega}
    F\|_{L^{2}_{t}L^{2}_{x}},
  \end{equation}
  \begin{equation}\label{eq:smooDiracVangd}
    \|\rho|x|^{-1/2}
    \partial \Lambda^{s}_{\omega}e^{it(\mathcal{D}+V)}f\|
    _{L^{2}_{t}L^{2}_{x}}
    \lesssim
    \|\Lambda^{s}_{\omega}f\|_{H^{1}}.
  \end{equation}
\end{corollary}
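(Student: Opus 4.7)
The plan is to apply Theorem~\ref{the:smoooestD} to $w:=\Lambda^s_\omega u$, where $u(t)=e^{it(\mathcal{D}+V)}f$, and to control the commutators produced by passing $\Lambda^s_\omega$ through the equation. Since $\beta$ is a constant matrix, $[\Lambda^s_\omega,\beta]=0$, so applying $\Lambda^s_\omega$ to $i\partial_t u + (\mathcal{D}+V)u=0$ gives
\begin{equation*}
  i\partial_t w + (\mathcal{D}+V)w = -[\Lambda^s_\omega,\mathcal{D}]u - [\Lambda^s_\omega,A_0]\beta u - [\Lambda^s_\omega,V_0]u,\qquad w(0)=\Lambda^s_\omega f.
\end{equation*}
Duhamel's formula, combined with \eqref{eq:smooDiracV} and \eqref{eq:smooDiracVnh}, reduces \eqref{eq:smooDiracVang} to the estimation of the three commutators in the norm $\|\rho^{-1}|x|^{1/2}\,\cdot\,\|_{L^2_tL^2_x}$.

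The $V_0$ commutator is controlled by a Kato--Ponce estimate on $\mathbb{S}^2$: for $s>1$ the embedding $H^s(\mathbb{S}^2)\hookrightarrow L^\infty(\mathbb{S}^2)$ yields, pointwise in $r=|x|$,
\begin{equation*}
  \|[\Lambda^s_\omega,V_0(r\cdot)]\,u(r\cdot)\|_{L^2_\omega}
  \lesssim
  \|\Lambda^sV_0(r\cdot)\|_{L^2_\omega}\,\|\Lambda^s_\omega u(r\cdot)\|_{L^2_\omega},
\end{equation*}
and the smallness in \eqref{eq:assang} gives $\|\rho^{-1}|x|^{1/2}[\Lambda^s_\omega,V_0]u\|_{L^2_{t,x}}\le C\epsilon\,\|\rho|x|^{-1/2}\Lambda^s_\omega u\|_{L^2_{t,x}}$, which is absorbed into the left-hand side for $\epsilon$ small. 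The $A_0$ commutator is handled similarly: for $s=2$ the product rule on the sphere gives $|[\Lambda^2_\omega,A_0]v|\lesssim |\nabla_{\mathbb{S}}A_0||\nabla_{\mathbb{S}}v|+|\Delta_{\mathbb{S}}A_0||v|$, and \eqref{eq:assangA0} yields $|\nabla_{\mathbb{S}}A_0|\lesssim \rho^2|x|^{-2}$ and $|\Delta_{\mathbb{S}}A_0|\lesssim \rho^2\langle x\rangle^{-1}$. Complex interpolation extends this to $s\in(1,2)$, producing a contribution $\lesssim \|\rho|x|^{-1/2}\Lambda^{s-1}_\omega u\|_{L^2_{t,x}}$, absorbed by interpolating between \eqref{eq:smooDiracV} and the estimate \eqref{eq:smooDiracVang} being proved, followed by Young's inequality.

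The delicate piece is $[\Lambda^s_\omega,\mathcal{D}]$, which carries no smallness and must be handled structurally. Using the polar decomposition $\mathcal{D}=-i(\alpha\cdot\hat x)\partial_r+r^{-1}\mathcal{D}_{\mathbb{S}}$, where $\mathcal{D}_{\mathbb{S}}$ is a first-order tangential matrix operator on the sphere,
\begin{equation*}
  [\Lambda^s_\omega,\mathcal{D}]
  = [\Lambda^s_\omega,-i(\alpha\cdot\hat x)]\,\partial_r
  + r^{-1}[\Lambda^s_\omega,\mathcal{D}_{\mathbb{S}}].
\end{equation*}
Since the entries of $\alpha\cdot\hat x$ are degree-one spherical harmonics, $[\Lambda^s_\omega,\alpha\cdot\hat x]$ is an angular operator of order $s-1$; the second bracket is of order $s$. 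Thus pointwise $\|[\Lambda^s_\omega,\mathcal{D}]u\|_{L^2_\omega}\lesssim \|\Lambda^{s-1}_\omega\partial_r u\|_{L^2_\omega}+r^{-1}\|\Lambda^s_\omega u\|_{L^2_\omega}$. The second contribution is of the form $\|\rho^{-1}|x|^{-1/2}\Lambda^s_\omega u\|_{L^2_{t,x}}$ and is handled by exchanging $\Lambda^s_\omega$ with $e^{it(\mathcal{D}+V)}$ via Duhamel and applying \eqref{eq:smooDiracV} to the datum $\Lambda^s_\omega f$; the first is controlled by the derivative smoothing estimate \eqref{eq:smooDiracVD} applied at angular regularity $s-1$, which motivates a bootstrap/induction on the angular index.

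The inhomogeneous estimate \eqref{eq:smooDiracVangnh} follows from the same scheme applied to the Duhamel representation, and the derivative estimate \eqref{eq:smooDiracVangd} is obtained by repeating the argument with $\partial u$ in place of $u$, producing an additional commutator $[\partial,V]u$ controlled via the derivative parts of \eqref{eq:assang}--\eqref{eq:assangA0} and \eqref{eq:smooDiracVD}. The main obstacle is the Dirac commutator $[\Lambda^s_\omega,\mathcal{D}]$: lacking smallness, it must be tamed structurally via the polar decomposition of $\mathcal{D}$ and a careful bootstrap matching the weights $\rho^{\pm 1}|x|^{\pm 1/2}$ appearing in the source and solution norms.
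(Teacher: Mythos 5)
Your approach and the paper's diverge at the key step, and the divergence creates a genuine gap. The paper does \emph{not} attempt to control $[\Lambda^s_\omega,\mathcal{D}]$. Instead it introduces the spin--orbit operator $K:=\beta(2S\cdot\Omega+1)$ and the associated fractional powers $|K|^s$ defined spectrally on the partial--wave basis $\Phi^{\pm}_{m_j,k_j}$. Since both $\mathcal{D}$ and $K$ are block--diagonal on the subspaces $L^2(0,\infty)\otimes H_{m_j,k_j}$, they commute \emph{exactly}; since $|K|^s$ and $\Lambda^s_\omega$ generate equivalent norms on $L^2(\mathbb{S}^2)$, this allows one to prove the estimates with $K$ and then transfer them to $\Lambda^s_\omega$. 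The only commutator that survives is $[K,A_0\beta]u$, and because $[K,\beta]=0$ this is controlled by $\|\Omega A_0\|_{L^\infty}$, which is exactly what \eqref{eq:assangA0} bounds. The $V_0$ term is then treated perturbatively as you describe (rewrite the equation with $F-V_0u$ on the right and absorb via \eqref{eq:assang}), which matches your argument.

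The gap in your proposal is the treatment of the Dirac commutator. After the polar decomposition you obtain (schematically) a forcing term $r^{-1}[\Lambda^s_\omega,\mathcal{D}_{\mathbb{S}}]u$ of angular order $s$ (not $s-1$: the orbital piece of $\mathcal{D}$ swaps $\Phi^+\leftrightarrow\Phi^-$, which live on harmonics of different degree, so there is no gain of a full order). Feeding this into the Duhamel source norm $\|\rho^{-1}|x|^{1/2}\cdot\|_{L^2_{t,x}}$ of \eqref{eq:smooDiracVnh} produces a quantity $\lesssim\|\rho^{-1}|x|^{-1/2}\Lambda^s_\omega u\|_{L^2_{t,x}}$, whereas the homogeneous smoothing estimate controls $\|\rho\,|x|^{-1/2}\Lambda^s_\omega u\|_{L^2_{t,x}}$. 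The weight mismatch is $\rho^{-2}$, which is unbounded (e.g.\ $\rho=\langle\log|x|\rangle^{-\nu}$), so this term cannot be absorbed and the bootstrap does not close. The claim that it can be handled by ``exchanging $\Lambda^s_\omega$ with $e^{it(\mathcal{D}+V)}$'' is circular, since that exchange is precisely what fails. Likewise, invoking \eqref{eq:smooDiracVD} ``at angular regularity $s-1$'' presupposes the very family of estimates under construction. In short, $[\Lambda^s_\omega,\mathcal{D}]$ carries no smallness and has no favorable structure relative to the weights, and the paper's replacement of $\Lambda^s_\omega$ by $|K|^s$ is not a convenience but the essential step that makes the argument work. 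The rest of your outline (Kato--Ponce on the sphere for $V_0$, interpolation in $s$, differentiating the equation for \eqref{eq:smooDiracVangd}) is consistent with the paper's proof once this obstruction is removed.
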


Note that for a radial scalar potential $A_{0}=A_{0}(|x|)$
assumption \eqref{eq:assangA0} is trivially satisfied.

\subsection{Large frequencies} \label{sub:largefreq}

We consider a $4$--dimensional system of
stationary Schr\"{o}dinger equations on $\mathbb{R}^{3}$
\begin{equation}\label{eq:system}
  I_{4}\Delta_{A}v
  +
  W(x)v
  +
  \sum_{j=1}^{3}Z_{j}(x)\,\partial^{A}_{j}v
  +zv
  =f,
  \qquad
  z\in \mathbb{C}
\end{equation}
where
$v=(v_{1},v_{2},v_{3},v_{4})$,
$W(x),Z_{j}(x):\mathbb{R}^{3}\to M_{4}(\mathbb{C})$
are square $4 \times 4$ matrices,
$I_{4}$ is the $4$--dimensional identity matrix,
$\Delta_{A}$ the magnetic laplacian
on $\mathbb{R}^{3}$
\begin{equation*}
  \textstyle
  \Delta_{A}=\sum_{j=1}^{3}(\partial_{j}+iA_{j})^{2},
  \qquad
  \partial_{j}=\frac{\partial}{\partial x_{j}}
\end{equation*}
and $A(x)=(A_{1}(x),A_{2}(x),A_{3}(x))$ is a vector of
real valued functions. We also use the notations
\begin{equation*}
  \textstyle
  \partial_{j}^{A}=\partial_{j}+iA_{j}(x),
  \quad
  \partial=(\partial_{1},\partial_{2},\partial_{3}),
  \quad
  \partial^{A}=(\partial_{1}^{A},\partial_{2}^{A},\partial_{3}^{A})
\end{equation*}
and, writing
$\widehat{x}_{j}=\frac{x_{j}}{|x|}$
and
$\widehat{x}=\frac{x}{|x|}$,
\begin{equation*}
  B_{jk}=\partial_{j}A_{k}-\partial_{k}A_{j},
  \qquad
  \widehat{B}_{j}=B_{jk}\widehat{x}_{k},
  \qquad
  \widehat{B}=(\widehat{B}_{1},\widehat{B}_{2},\widehat{B}_{3}).
\end{equation*}
Here and in the following we use the convention
of implicit summation over repeated indices.

We begin by studying the case of large frequency
$|\Re z|\gg1$. In this regime we use a direct
approach, via the Morawetz multiplier method.

\begin{proposition}[Resolvent estimate for large frequencies]  \label{pro:resestlarge}
  There exists a constant $\sigma_{0}$ such that 
  the following holds.

  Let 
  $W_{L}(x),W_{S}(x),Z_{j}(x):
    \mathbb{R}^{3}\to M_{4}(\mathbb{C})$
  be square $4 \times 4$ matrices, let $W=W_{L}+W_{S}$,
  and let $v,f:\mathbb{R}^{3}\to \mathbb{C}^{4}$ satisfy
  \eqref{eq:system}.
  Assume that $|\Im z|\le1$ and
  \begin{equation}\label{eq:asscoeff}
    \||x|^{3/2}W_{S}\|_{\ell^{1}L^{2}L^{\infty}}
    +\||x|Z\|_{\ell^{1}L^{\infty}}
    \le \sigma_{0},
    \quad
    |\Re z|
    \ge
    \sigma_{0}^{-1}
    \left[
    \||x|\widehat{B}\|_{\ell^{1}L^{\infty}}^{2}
    +
    \||x|W_{L}\|_{\ell^{1}L^{\infty}}
    \right]
    +2.
  \end{equation}
  Then the following estimate holds
  \begin{equation}\label{eq:estlargela}
    \textstyle
    \|v\|_{\dot X}^{2}
    +
    |z|\|v\|_{\dot Y}^{2}
    +
    \|\partial^{A}v\|_{\dot Y}^{2}
    \lesssim
    \|f\|_{\dot Y^{*}}^{2}.
  \end{equation}
\end{proposition}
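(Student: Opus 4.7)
The plan is to apply the Morawetz multiplier method to \eqref{eq:system}, following the template of the magnetic--Schr\"odinger resolvent estimates of Barcel\'o--Ruiz--Vega and Fanelli--Vega, and adapting it to the $\mathbb{C}^{4}$-valued setting with matrix-valued coefficients. I would test the equation against a multiplier of the form $\mathcal{M}v = \phi(r)\,\partial_r^A v + \psi(r)\,v$, with $r=|x|$ and $\partial_r^A = \widehat{x}\cdot \partial^A$, take the real part of the resulting integral identity, and choose radial profiles $\phi,\psi$ so that the positive terms on the left-hand side reconstruct the three quantities $\|v\|_{\dot X}^{2}$, $\|\partial^A v\|_{\dot Y}^{2}$ and $|z|\,\|v\|_{\dot Y}^{2}$.

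First I would compute the Morawetz identity for the principal part $I_{4}\Delta_{A}$. Standard integration by parts, combined with the commutator identity $[\partial_j^A,\partial_k^A] = iB_{jk}$ and the radial/tangential decomposition of $\partial^A$, produces on the left-hand side expressions of the form $\int \phi'|\partial_r^A v|^{2}$, $\int r^{-1}\phi\,|\partial_\omega^A v|^{2}$, a term $-\tfrac12\int \Delta\psi\,|v|^{2}$, and a magnetic curvature remainder $\int \phi\,\widehat B\cdot \operatorname{Im}(\bar v\,\partial^A v)$. Choosing $\phi$ bounded and increasing and $\psi\ge0$ as is classical in Morawetz estimates for the Helmholtz equation, so that $-\Delta\psi$ carries a delta-like contribution at the origin, one reads $\|v\|_{\dot X}^{2} + \|\partial^A v\|_{\dot Y}^{2}$ off the positive part; the spectral term produces $\Re\langle zv,\psi v\rangle = \Re z\int \psi|v|^{2}$, which, after absorbing the negligible imaginary contribution using $|\Im z|\le 1$, yields exactly $|z|\,\|v\|_{\dot Y}^{2}$.

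Next I would estimate the perturbation contributions via annular decomposition and the duality between $\dot X$, $\dot Y$ and $\dot Y^{*}$. The short-range potential is controlled by $|\langle W_S v,\mathcal{M}v\rangle|\lesssim \||x|^{3/2}W_S\|_{\ell^1 L^2 L^\infty}\,\|v\|_{\dot X}\,\|\mathcal{M}v\|_{\dot Y}$, which is absorbable once the trivial bound $\|\mathcal{M}v\|_{\dot Y}\lesssim \|v\|_{\dot X}+\|\partial^A v\|_{\dot Y}$ is used and $\sigma_{0}$ is chosen small. The first-order perturbation $Z\cdot\partial^A v$ is bounded analogously by $\||x|Z\|_{\ell^1 L^\infty}(\|\partial^A v\|_{\dot Y}^{2}+\|v\|_{\dot X}^{2})$. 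The long-range potential $W_L$ and the magnetic curvature term are the ones responsible for the lower threshold on $|\Re z|$: one has $|\langle W_L v,\mathcal{M}v\rangle|\lesssim \||x|W_L\|_{\ell^1 L^\infty}\|v\|_{\dot Y}(\|v\|_{\dot Y}+\|\partial^A v\|_{\dot Y})$ and $|\int \phi\,\widehat B\cdot\operatorname{Im}(\bar v\,\partial^A v)|\lesssim \||x|\widehat B\|_{\ell^1 L^\infty}\|v\|_{\dot Y}\|\partial^A v\|_{\dot Y}$, and AM--GM absorbs these into $|z|\|v\|_{\dot Y}^{2}+\|\partial^A v\|_{\dot Y}^{2}$ precisely under the thresholds in \eqref{eq:asscoeff}. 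Finally, $|\langle f,\mathcal{M}v\rangle|\lesssim \|f\|_{\dot Y^{*}}(\|v\|_{\dot X}+\|\partial^A v\|_{\dot Y})$ by the dual inequality, and a Young inequality closes the estimate.

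The main obstacle I anticipate is the careful bookkeeping of the several error terms produced by the Morawetz identity: the commutator of $\phi(r)\partial_r^A$ with $\Delta_A$ yields lower-order contributions that mix the radial and tangential components of $\partial^A v$ and couple to the magnetic field $B$, and since the matrices $W, Z$ are not assumed Hermitian no automatic algebraic cancellation is available---one must rely on pointwise size bounds throughout. A secondary but delicate point is to select $\phi,\psi$ so that simultaneously $\phi'$ reproduces $\|v\|_{\dot X}^{2}$ (via a Hardy-type inequality), $\phi$ stays bounded so that $\mathcal{M}v$ is controllable in $\dot Y$, and the coefficient of $|v|^{2}$ arising from $-\Delta\psi$ and the other lower-order terms is nonnegative. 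This is precisely where the choice of the Agmon--H\"ormander style norms $\dot X,\dot Y,\dot Y^{*}$ pays off.
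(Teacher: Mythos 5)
Your overall strategy is the same as the paper's: a Morawetz multiplier of the form $\phi(r)\,\partial_r^A v+\psi(r)\,v$ (the paper writes it as $\overline{[\Delta_A,\psi]w}$ plus $\phi\overline w$, which is the same object), radial/tangential decomposition, and absorption of the perturbations via the $\dot X,\dot Y,\dot Y^{*}$ duality. The perturbation bookkeeping you sketch is essentially the one carried out in Section~\ref{sub:the_arg_for_lar_fre}, including the role of the two thresholds in \eqref{eq:asscoeff}.

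There is, however, a genuine gap in your treatment of the imaginary part $\epsilon=\Im z$. You write off the imaginary contribution as ``negligible using $|\Im z|\le 1$,'' but the term that actually appears when you take the real part of $\langle zv,\phi\,\partial_r^A v\rangle$ is
\begin{equation*}
  I_{\epsilon}\ \sim\ \epsilon\,\Im\!\int \phi\,\overline{v}\,\partial_r^A v,
\end{equation*}
and the natural bound $|\int I_\epsilon|\lesssim |\epsilon|\,\|v\|_{L^{2}}\|\partial^A v\|_{L^{2}}$ involves \emph{global} $L^{2}$ quantities, which are \emph{not} controlled by the left-hand side $\|v\|_{\dot X}^{2}+|z|\|v\|_{\dot Y}^{2}+\|\partial^A v\|_{\dot Y}^{2}$; the $\dot X,\dot Y$ norms are strictly weaker near infinity. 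So this term cannot be absorbed the way you suggest. The paper resolves this with two auxiliary \emph{global} identities obtained from the multiplier $\phi\equiv 1$ (identities \eqref{eq:aux1}, \eqref{eq:aux2}), namely $\epsilon\|v\|_{L^{2}}^{2}=\Im\int g_\alpha\overline v_\alpha$ and $\|\partial^A v\|_{L^{2}}^{2}=\lambda\|v\|_{L^{2}}^{2}-\Re\int g_\alpha\overline v_\alpha$. Feeding these into the Cauchy--Schwarz bound for $I_\epsilon$ converts the global $L^{2}$ norms into $\lambda^{1/2}\|g_\alpha\overline v_\alpha\|_{L^{1}}$ (estimate \eqref{eq:estIep}), which then enters the absorption scheme on the same footing as the other error terms. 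Without this step your argument does not close. A second, smaller omission: you do not address the case $\Re z<0$, which in the paper is dispatched separately through the identity \eqref{eq:aux4} with the weight $\phi=(|x|\vee R)^{-1}$ before being fed into the same perturbative machinery.
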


\begin{remark}[]\label{rem:deAtode}
  Under a weak additional assumption on $A$,
  the norm $\|\partial^{A}v\|_{\dot Y}$ in
  \eqref{eq:estlargela} can be replaced by 
  $\|\partial v\|_{\dot Y}$, thanks to the following
\end{remark}

\begin{lemma}[]\label{lem:estdeAde}
  Assume $A\in\ell^{\infty}L^{3}$.
  Then the following estimate holds
  \begin{equation}\label{eq:deAtode}
    \|\partial v\|_{\dot Y}
    \lesssim
    (1+\|A\|_{\ell^{\infty}L^{3}})
    \Bigl[\|\partial^{A}v\|_{\dot Y}+\|v\|_{\dot X}\Bigr]
  \end{equation}
  with an implicit constant independent of $A$.
\end{lemma}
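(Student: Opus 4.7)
The plan is to split the ordinary derivative via $\partial_j = \partial_j^A - iA_j$, so that by the triangle inequality
\[
\|\partial v\|_{\dot Y} \leq \|\partial^A v\|_{\dot Y} + \|Av\|_{\dot Y},
\]
which reduces the lemma to the bilinear estimate
\[
\|Av\|_{\dot Y} \lesssim \|A\|_{\ell^{\infty}L^{3}}\bigl(\|\partial^A v\|_{\dot Y} + \|v\|_{\dot X}\bigr).
\]

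To establish this I would work dyadically on $C_{j}=\{2^{j}\le|x|<2^{j+1}\}$, using the equivalence $\|\cdot\|_{\dot Y}^{2}\simeq \sup_{j} 2^{-j}\|\cdot\|_{L^{2}(C_{j})}^{2}$ already recorded in the paper. Applying H\"older in $x$ with exponents $3$ and $6$ yields
\[
\int_{C_{j}}|A|^{2}|v|^{2}\,dx \le \|A\|_{L^{3}(C_{j})}^{2}\|v\|_{L^{6}(C_{j})}^{2} \le \|A\|_{\ell^{\infty}L^{3}}^{2}\|v\|_{L^{6}(C_{j})}^{2}.
\]

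The delicate step is controlling $\|v\|_{L^{6}(C_{j})}$ without reintroducing the ordinary gradient $\partial v$, which would be circular. For this I would invoke the diamagnetic inequality $|\nabla|v||\le|\partial^{A}v|$, which extends to $\mathbb{C}^{4}$-valued $v$ via the pointwise identity $\partial_{j}|v|^{2}=2\,\mathrm{Re}\langle v,\partial_{j}^{A}v\rangle$ (the $iA_{j}|v|^{2}$ contribution being purely imaginary) followed by Cauchy--Schwarz, combined with the scale-invariant Sobolev embedding $\dot H^{1}(\mathbb{R}^{3})\hookrightarrow L^{6}$ applied to $|v|\chi_{j}$, where $\chi_{j}$ is a smooth cutoff equal to $1$ on $C_{j}$, supported in a slight enlargement $\widetilde{C}_{j}$, with $|\nabla\chi_{j}|\lesssim 2^{-j}$. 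Controlling the cutoff commutator by $2^{-j}$ gives
\[
\|v\|_{L^{6}(C_{j})}^{2}\lesssim \|\partial^{A}v\|_{L^{2}(\widetilde{C}_{j})}^{2} + 2^{-2j}\|v\|_{L^{2}(\widetilde{C}_{j})}^{2}.
\]

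To finish, I would multiply by $2^{-j}$ and convert the two terms back to the global norms. The first gives $2^{-j}\|\partial^{A}v\|_{L^{2}(\widetilde{C}_{j})}^{2}\lesssim \|\partial^{A}v\|_{\dot Y}^{2}$ directly from the definition of $\dot Y$ (the enlargement $\widetilde{C}_{j}$ is covered by $O(1)$ dyadic rings). For the second, the identification $\dot X\simeq L^{\infty}_{|x|}L^{2}_{\omega}$ with the unnormalized surface measure used throughout the paper yields $\|v\|_{L^{2}(\widetilde{C}_{j})}^{2}\lesssim 2^{3j}\|v\|_{\dot X}^{2}$, hence $2^{-3j}\|v\|_{L^{2}(\widetilde{C}_{j})}^{2}\lesssim \|v\|_{\dot X}^{2}$. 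Taking $\sup_{j}$ closes the argument. The main obstacle is precisely the diamagnetic step: without it, Sobolev on the ring would reintroduce $\|\partial v\|_{L^{2}(C_{j})}$, whose magnetic component $\|Av\|_{L^{2}(C_{j})}$ could not be absorbed, since the lemma does not assume smallness of $A$ but only boundedness in the scale-critical norm $\ell^{\infty}L^{3}$.
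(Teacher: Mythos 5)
Your proposal is correct and follows essentially the same route as the paper's proof: dyadic decomposition into spherical shells, H\"older with exponents $3$ and $6$, then the scale-invariant Sobolev embedding applied to $\chi_j|v|$ combined with the diamagnetic inequality $|\nabla|v||\le|\partial^A v|$ to avoid reintroducing $\partial v$, and finally the cutoff commutator and dyadic bookkeeping to convert to $\dot Y$ and $\dot X$ norms. The only cosmetic difference is that the paper records the low-order term as $2^{-j/2}\|v\|_{L^\infty L^2(\widetilde C_j)}$ before taking the supremum, whereas you convert $\|v\|_{L^2(\widetilde C_j)}^2\lesssim 2^{3j}\|v\|_{\dot X}^2$ directly; both give the same bound.
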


\begin{proof}%[of ...]
  Let $C_{j}$ be the spherical shell $2^{j}\le|x|\le 2^{j+1}$
  and $\widetilde{C}_{j}=C_{j-1}\cup C_{j}\cup C_{j+1}$.
  Let $\phi$ be a nonnegative
  cutoff function equal to $1$ on $C_{j}$
  and vanishing outside $\widetilde{C}_{j}$, and let
  $\phi_{j}(x)=\phi(2^{-j}x)$. Then we can write
  \begin{equation*}
    \|\partial v\|_{L^{2}(C_{j})}\le
    \|\phi_{j}\partial v\|_{L^{2}}\le
    \|\phi_{j}\partial^{A} v\|_{L^{2}}
    +
    \|\phi_{j}Av\|_{L^{2}}
  \end{equation*}
  By H\"{o}lder's inequality and Sobolev embedding we have
  \begin{equation*}
    \|\phi_{j}Av\|_{L^{2}}\le
    \|A\|_{L^{3}(\widetilde{C}_{j})}
    \|\phi_{j}v\|_{L^{6}}
    \lesssim
    \|A\|_{\ell^{\infty}L^{3}}
    \|\phi_{j}v\|_{L^{6}}
    \lesssim
    \|A\|_{\ell^{\infty}L^{3}}
    \|\partial(\phi_{j}|v|)\|_{L^{2}}.
  \end{equation*}
  We expand the last term as
  \begin{equation*}
    \|\partial(\phi_{j} |v|)\|_{L^{2}}
    \le
    \|(\partial\phi_{j})|v|\|_{L^{2}}
    +
    \|\phi_{j}(\partial|v|)\|_{L^{2}}.
  \end{equation*}
  We note that $|\partial \phi_{j}|\lesssim 2^{-j}$
  and we recall the pointwise diamagnetic inequality
  \begin{equation*}
    |\partial|v||\le|\partial^{A}v|
  \end{equation*}
  valid since $A\in L^{2}_{loc}$. Then we can write
  \begin{equation*}
    \|\partial(\phi_{j}v)\|_{L^{2}}
    \lesssim
    2^{-j}\|v\|_{L^{2}(\widetilde{C}_{j})}
    +
    \|\partial^{A}v\|_{L^{2}(\widetilde{C}_{j})}
    \lesssim
    2^{-j/2}\|v\|_{L^{\infty} L^{2}(\widetilde{C}_{j})}
    +
    \|\partial^{A}v\|_{L^{2}(\widetilde{C}_{j})}.
  \end{equation*}
  Summing up, we have proved
  \begin{equation*}
    \|\partial v\|_{L^{2}(C_{j})}
    \lesssim
    (1+\|A\|_{\ell^{\infty}L^{3}})
    \left[
    \|\partial^{A}v\|_{L^{2}(\widetilde{C}_{j})}
    +
    2^{-j/2}\|v\|_{L^{\infty} L^{2}(\widetilde{C}_{j})}.
    \right]
  \end{equation*}
  Multiplying both sides by $2^{-j/2}$ and taking the sup
  in $j\in \mathbb{Z}$ we get the claim.
\end{proof}

\subsection{Large frequencies: formal identities}
\label{sub:for_ide}
In the course of the proof we shall reserve the symbols
\begin{equation*}
  \lambda=\Re z,
  \qquad
  \epsilon=\Im z
\end{equation*}
for the components of the frequency $z=\lambda+i \epsilon$
in \eqref{eq:system}.

The main tools are a few Moraw\-etz type identities,
based on the two multipliers
\begin{equation*}
  \overline{[\Delta_{A},\psi]w}=
  (\Delta \psi)\overline{w}+2 \partial \psi \cdot 
  \overline{\partial^{A}w}
  \quad\text{and}\quad 
  \phi \overline{w}
\end{equation*}
where $\phi(x),\psi(x)$ are real valued, spherically
symmetric weight functions to be chosen in the
following, and $w\in H^{2}_{loc}(\mathbb{R}^{3})$
is complex valued.
Define, with $c(x)$ a complex valued function,
\begin{equation*}
  \textstyle
  Q_{j}:=
  \partial_{j}^{A}w \, \overline{[\Delta_{A},\psi]w}
  -\frac12 \partial_{j}\Delta \psi\, |w|^{2}
  -\partial_{j}\psi\, 
  [c(x)|w|^{2}+|\partial^{A}w|^{2}]
\end{equation*}
and
\begin{equation*}
  \textstyle
  P_{j}:=\partial^{A}_{j}w\, \overline{w}\,\phi
  -\frac12 \partial_{j}\phi|w|^{2}
\end{equation*}
Then the following identities hold
\begin{equation}\label{eq:id1}
\begin{split}
  \Re \partial_{j}Q_{j}
  =
  &
  \Re(\Delta_{A}w-cw)\overline{[\Delta_{A},\psi]w}
  \textstyle
  -\frac12 \Delta^{2}\psi|w|^{2}
  +2 \partial^{A}_{j}w\, (\partial_{j}\partial_{k}\psi) \,
  \overline{\partial^{A}_{k}w}
  \\
  &
  -\Re (\partial_{j} \psi \, \overline{\partial_{j} c})|w|^{2}
  +2\Im( \overline{w}\,B_{jk}\,\partial^{A}_{j}w \,\partial_{k}\psi)
  -2(\Im c)\Im(w\,\partial_{j} \psi \, \overline{\partial^{A}_{j}w})
\end{split}
\end{equation}
and
\begin{equation}\label{eq:id2}
  \textstyle
  \partial_{j}P_{j}=
  \overline{w}\, \Delta_{A}w\,\phi
  +|\partial^{A}w|^{2}\, \phi
  -\frac12 \Delta \phi\,|w|^{2}
  +i\Im(\partial^{A}_{j}w\,\partial_{j}\phi\,\overline{w})
\end{equation}
These Morawetz type identities are well
known (see e.g.~\cite{CacDanLuc} for the form used here),
and are not difficult to check directly by expanding
the derivatives of $Q_{j},P_{j}$ at the left hand side
and keeping track of the resulting terms.

We need to apply the previous identities to a
$4$--tuple of functions $v=(v_{\alpha})_{\alpha=1}^{4}$.
We shall use the notation 
$|v|^{2}=|v_{1}|^{2}+ \dots +|v_{4}|^{2}$
and follow the convention of implicit summation over
repeated index $\alpha=1,\dots,4$.
If we define
\begin{equation*}
  g_{\alpha}:=\Delta_{A}v_{\alpha}+
  (\lambda+i \epsilon)v_{\alpha},
  \qquad
  \alpha=1,\dots,4
\end{equation*}
and denote by $Q_{j}^{\alpha}$, $P_{j}^{\alpha}$ the
quantities $Q_{j},P_{j}$ with $w$ replaced by $v_{\alpha}$,
we obtain
\begin{equation}\label{eq:fundid}
  \textstyle
  \Re \partial_{j}
  \{\sum_{\alpha}(Q_{j}^{\alpha}+P_{j}^{\alpha})\}
  =
  I_{\nabla v}+I_{v}+I_{\epsilon}+I_{B}+I_{g}
\end{equation}
where
\begin{equation*}
  I_{\nabla v}=
  2 \partial^{A}_{j}v_{\alpha}
  \, (\partial_{j}\partial_{k}\psi) \,
    \overline{\partial^{A}_{k}v_{\alpha}}
  + \phi|\partial^{A}v|^{2},
  \qquad
  \textstyle
  I_{v}=
  -\frac12 \Delta(\Delta \psi+\phi)|v|^{2}
  -\lambda \phi|v|^{2}
\end{equation*}
\begin{equation*}
  I_{B}=
  2\Im( \overline{v_{\alpha}}\,B_{jk}
  \,\partial^{A}_{j}v_{\alpha} 
  \,\partial_{k}\psi),
  \qquad
  I_{\epsilon}=
  2 \epsilon\Im(v_{\alpha}\,\partial_{j} 
  \psi \, \overline{\partial^{A}_{j}v_{\alpha}}),
\end{equation*}
\begin{equation*}
  I_{g}=
  \Re(g_{\alpha}\,\overline{[\Delta_{A},\psi]v_{\alpha}}
  +g_{\alpha}\,\overline{v_{\alpha}}\,\phi)
\end{equation*}

\subsection{Large frequencies:
preliminary estimates}\label{sub:pre_est}

We begin with a few elementary estimates based on
identity \eqref{eq:id2}, with different choices of the
radial weight $\phi$.
Writing \eqref{eq:id2} with $\phi=1$ and taking the
imaginary part, we get
\begin{equation*}
  \epsilon|v|^{2}=
  \Im(g_{\alpha}\overline{v}_{\alpha})
  -\Im \partial_{j}\{\overline{v}_{\alpha}
  \,\partial_{j}^{A}v_{\alpha}\}
\end{equation*}
and after integration on $\mathbb{R}^{3}$ we obtain
\begin{equation}\label{eq:aux1}
  \textstyle
  \epsilon\|v\|_{L^{2}}^{2}=
  \Im\int g_{\alpha}\overline{v}_{\alpha}.
\end{equation}
(Here and in the following we shall freely use the fact
that the boundary term vanish after integration, as it is
easy to check.)
Taking instead the real part of the same identity 
(with $\phi=1$) we obtain
\begin{equation*}
  |\partial^{A}v|^{2}=
  \lambda|v|^{2}-\Re(g_{\alpha}\overline{v}_{\alpha})
  +\Re\partial_{j}\{\overline{v}_{\alpha}
  \,\partial_{j}^{A}v_{\alpha}\}
\end{equation*}
and after integration
\begin{equation}\label{eq:aux2}
  \textstyle
  \|\partial^{A}v\|_{L^{2}}^{2}
  =
  \lambda\|v\|_{L^{2}}^{2}
  -\Re\int g_{\alpha}\overline{v}_{\alpha}.
\end{equation}

In order to estimate the term $I_{\epsilon}$ we use
\eqref{eq:aux1} and \eqref{eq:aux2} as follows:
\begin{equation*}
  \textstyle
  \int I_{\epsilon}
  \le
  2 |\epsilon|\|\partial \psi\|_{L^{\infty}}
  \|v\|_{L^{2}}\|\partial^{A}v\|_{L^{2}}
  \le
  C|\epsilon|^{1/2}(\int|g_{\alpha}\overline{v}_{\alpha}|)^{1/2}
  (|\lambda|\|v\|^{2}_{L^{2}}+\int|g_{\alpha}\overline{v}_{\alpha}|)
  ^{1/2}
\end{equation*}
with $C=2\|\partial \psi\|_{L^{\infty}}$,
then again by \eqref{eq:aux1}
\begin{equation*}
  \textstyle
  \le
  C(\int|g_{\alpha}\overline{v}_{\alpha}|)^{1/2}
  (|\lambda|\int|g_{\alpha}\overline{v}_{\alpha}|
  +|\epsilon|\int|g_{\alpha}\overline{v}_{\alpha}|)^{1/2}
\end{equation*}
and we arrive at the estimate
\begin{equation}\label{eq:estIep}
  \textstyle
  \int I_{\epsilon}
  \le
  2\|\partial \psi\|_{L^{\infty}}(|\lambda|+|\epsilon|)^{1/2}
  \|g_{\alpha}\overline{v}_{\alpha}\|_{L^{1}}.
\end{equation}

Another auxiliary estimate will cover 
the (easy) case of negative
$\lambda=-\lambda_{-}\le0$. Write 
the real part of identity \eqref{eq:id2} in the form
\begin{equation*}
  \textstyle
  \lambda_{-}|v|^{2}\phi+|\partial^{A}v|^{2}\phi
  -\frac12 \Delta \phi|v|^{2}
  =
  \sum_{\alpha}
  \partial_{j}\Re P_{j}^{\alpha}
  -\Re(g_{\alpha}\overline{v}_{\alpha})\phi
\end{equation*}
and choose the radial weight
\begin{equation*}
  \textstyle
  \phi=\frac{1}{|x|\vee R}
  \quad\implies\quad
  \phi'=-\frac{1}{|x|^{2}}\one{|x|>R},
  \quad
  \phi''=-\frac{1}{R^{2}}\delta_{|x|=R}+
    \frac{2}{|x|^{3}}\one{|x|>R}.
\end{equation*}
Note that
\begin{equation*}
  \textstyle
  -\Delta \phi=
  \frac{1}{R^{2}}\delta_{|x|=R}.
\end{equation*}
Integrating over $\mathbb{R}^{3}$ and taking the
supremum over $R>0$ we obtain the estimate
\begin{equation}\label{eq:aux4}
  \textstyle
  \lambda_{-}\|v\|_{\dot Y}^{2}
  +
  \|\partial^{A}v\|_{\dot Y}^{2}
  +
  \frac12\|v\|_{\dot X}^{2}
  \le
  \||x|^{-1}g_{\alpha}\overline{v}_{\alpha}\|_{L^{1}}.
\end{equation}

\subsection{Large frequencies: the main terms}\label{sub:mai_est}

In the following we assume 
$|\epsilon|\le1$ and $\lambda\ge2$.
We choose in \eqref{eq:fundid}, for arbitrary $R>0$,
\begin{equation}\label{eq:ourpsi}
  \psi=
  \frac{1}{2R}|x|^{2}\one{|x|\le R}+|x|\one{|x|>R},
  \qquad
  \phi=-\frac{1}{R}\one{|x|\le R}.
\end{equation}
We have then
\begin{equation}\label{eq:Apsifi}
  \psi'=\frac{|x|}{|x|\vee R},
  \qquad
  \psi''=
  \frac1R\one{|x|\le R},
  \qquad
  \Delta \psi+\phi=\frac{2}{|x|\vee R},
\end{equation}
\begin{equation*}
  \textstyle
  \Delta(\Delta \psi+\phi)=
    -\frac{2}{R^{2}}
    \delta_{|x|=R}
\end{equation*}
This implies
\begin{equation}\label{eq:estIv}
  \textstyle
  3
  \sup_{R>0}\int I_{v}
  \ge
  \|v\|_{\dot X}^{2}
  + \lambda\|v\|_{\dot Y}^{2}
\end{equation}

Next we can write, since $\psi$ is radial,
\begin{equation*}
  \textstyle
  2 \partial^{A}_{j}v_{\alpha}\, (\partial_{j}\partial_{k}\psi) \,
    \overline{\partial^{A}_{k}v_{\alpha}}
  =
  2\psi''
  \left|\widehat{x}\cdot \partial^{A}v_{\alpha}\right| ^{2}
  +
  2\frac{\psi'}{|x|}
  \left[|\partial^{A}v_{\alpha}|^{2}
    -\left|\widehat{x}\cdot \partial^{A}v_{\alpha}\right| ^{2}\right]
  \ge
  \frac{2}{R}\one{|x|<R}|\partial^{A}v_{\alpha}|^{2}.
\end{equation*}
This implies
\begin{equation}\label{eq:estInav}
  \textstyle
  \sup_{R>0}
  \int I_{\nabla v}
  \ge
  2\|\partial^{A}v\|_{\dot Y}^{2}.
  % + 2\||x|^{-1/2}(\partial^{A}v)_{T}\|_{L^{2}}^{2}
\end{equation}

Further we have, since 
$B_{jk}\partial_{k}\psi
  =B_{jk}\widehat{x}_{k}\psi'=\widehat{B}_{j}\psi'$,
\begin{equation*}
  |I_{B}|\le \frac{2|x|}{|x| \vee R}|v||\partial^{A}v||\widehat{B}|
  \le 2|v||\partial^{A}v||\widehat{B}|
\end{equation*}
which implies
\begin{equation*}
  \textstyle
  \int |I_{B}|\le
  2
  \||x|\widehat{B}\|_{\ell^{1}L^{\infty}}
  \||x|^{-1/2}\partial^{A}v\|_{\ell^{\infty}L^{2}}
  \||x|^{-1/2}v\|_{\ell^{\infty}L^{2}}
  =
  2
  \||x|\widehat{B}\|_{\ell^{1}L^{\infty}}
  \|\partial^{A}v\|_{\dot Y}
  \|v\|_{\dot Y}
\end{equation*}
and by Cauchy--Schwartz, for any $\delta>0$,
\begin{equation}\label{eq:estIB}
  \textstyle
  \int |I_{B}|\le
  \delta\|\partial^{A}v\|_{\dot Y}^{2}
  +
  \delta^{-1}
  \||x|\widehat{B}\|_{\ell^{1}L^{\infty}}^{2}
  \|v\|_{\dot Y}^{2}.
\end{equation}

Finally, since $|\Delta \psi+\phi|\le2|x|^{-1}$ and
$|\partial \psi|\le1$, we have
\begin{equation}\label{eq:estIg}
  \textstyle
  \int|I_{g}|\le
  2\||x|^{-1}g_{\alpha}\overline{v}_{\alpha}\|_{L^{1}}
  +
  2\|g_{\alpha}\overline{\partial^{A}v_{\alpha}}\|_{L^{1}}.
\end{equation}

Summing up, by integrating identity \eqref{eq:fundid} over
$\mathbb{R}^{3}$ and using estimates
\eqref{eq:estIep}
\eqref{eq:estIv}, \eqref{eq:estInav}, \eqref{eq:estIB}
and \eqref{eq:estIg} we obtain
(recall that $|\partial \psi|\le1$; recall also
that $\lambda\ge2$ and $|\epsilon|\le1$ so that
$|\epsilon|+|\lambda|\lesssim \lambda$)
\begin{equation*}
\begin{split}
  \textstyle
  \|v\|_{\dot X}^{2}
  + &
  \lambda\|v\|_{\dot Y}^{2}
  +
  \|\partial^{A}v\|_{\dot Y}^{2}
  \\
  \lesssim
  \delta\|\partial^{A}v\|_{\dot Y}^{2}
  +
  \delta^{-1}
  \||x|\widehat{B}\|_{\ell^{1}L^{\infty}}^{2}
  &
  \|v\|_{\dot Y}^{2}
  +
  \lambda^{1/2}
    \|g_{\alpha}\overline{v}_{\alpha}\|_{L^{1}}
  +
  \||x|^{-1}g_{\alpha}\overline{v}_{\alpha}\|_{L^{1}}
  +
  \|g_{\alpha}\overline{\partial^{A}v_{\alpha}}\|_{L^{1}}
\end{split}
\end{equation*}
where $\delta>0$ is arbitrary
and the implicit constant is a universal constant depending
only on $n,N$.
Note now that if $\delta$ is chosen small enough with 
respect to $n$ and we assume
\begin{equation}\label{eq:firstla}
  \lambda\ge c
  \||x|\widehat{B}\|_{\ell^{1}L^{\infty}}^{2}
\end{equation}
for a suitably large $c$, we can absorb two terms at the
right and we get the estimate
\begin{equation}\label{eq:firststep}
  \textstyle
  \|v\|_{\dot X}^{2}
  +
  \lambda\|v\|_{\dot Y}^{2}
  +
  \|\partial^{A}v\|_{\dot Y}^{2}
  \le
  c_{0}\left(
  \|\frac{g_{\alpha}\overline{v}_{\alpha}}{|x|}\|_{L^{1}}
  +
  \|g_{\alpha}\overline{\partial^{A}v_{\alpha}}\|_{L^{1}}
  +  \lambda^{\frac12}
    \|g_{\alpha}\overline{v}_{\alpha}\|_{L^{1}}
    \right)
\end{equation}
where $c_{0}\ge1$ is a universal constant.

\subsection{Large frequencies:
conclusion} \label{sub:the_arg_for_lar_fre}

We now define, for $v=(v_{\alpha})_{\alpha=1}^{4}$,
\begin{equation*}
  f:=I_{4}(\Delta_{A}+(\lambda+i \epsilon))v
  +
  W(x)v
  +
  Z(x)\cdot\partial^{A}v
\end{equation*}
where $Z=(Z_{1},Z_{2},Z_{3})$ and $W(x),Z_{j}(x)$ 
are $4 \times 4$ matrices.
We can apply estimate \eqref{eq:firststep} by defining
$g=(g_{1},\dots,g_{4})$ as
\begin{equation*}
  g=f-W(x)v- Z(x)\cdot\partial^{A}v.
\end{equation*}
We now estimate the terms at the right in 
\eqref{eq:firststep}, assuming that $W$ has a
(small) short range component and a (large) long range
component:
\begin{equation*}
  W=W_{S}+W_{L}.
\end{equation*}
We denote by $\gamma,\Gamma$ the quantities
\begin{equation*}
  \gamma:=\||x|^{3/2}W_{S}\|_{\ell^{1}L^{2}L^{\infty}}
  +\||x|Z\|_{\ell^{1}L^{\infty}},
  \qquad
  \Gamma:=\||x|W_{L}\|_{\ell^{1}L^{\infty}}.
\end{equation*}
Then we have
(we omit for simplicity the index $\alpha$)
\begin{equation*}
  \||x|^{-1}gv\|_{L^{1}}
  \le
  \||x|^{-1}W(x)v^{2}\|_{L^{1}}
  +
  \||x|^{-1} v Z(x)\cdot \partial^{A}v\|_{L^{1}}
  +
  \||x|^{-1}f\overline{v}\|_{L^{1}}
\end{equation*}
and, for any $\delta>0$,
\begin{equation*}
\begin{split}
  \||x|^{-1}W(x)v^{2}\|_{L^{1}}
  \le&
  \||x|^{1/2}W_{L}\|_{\ell^{1}L^{2}L^{\infty}}
  \|v\|_{\dot X}
  \|v\|_{\dot Y}
  +
  \||x|W_{S}\|_{\ell^{1}L^{1}L^{\infty}}\|v\|_{\dot X}^{2}
  \\
  \le&
  (\delta+\gamma)\|v\|_{\dot X}^{2}+
  \delta^{-1}\Gamma^{2}\|v\|_{\dot Y}^{2}
\end{split}
\end{equation*}
\begin{equation*}
  \||x|^{-1} v Z(x)\cdot \partial^{A}v\|_{L^{1}}
  \le
%  \||x|Z\|_{\ell^{1}L^{\infty}}
%  \|v\|_{\dot Y}\|\partial^{A}v\|_{\dot Y}
%  \le
%  \gamma^{2}\|\partial^{A}v\|_{\dot Y}^{2}
%  +
%  \gamma^{2}\|v\|_{\dot Y}^{2}
  \||x|^{1/2}Z\|_{\ell^{1}L^2L^{\infty}}
  \|v\|_{\dot X}\|\partial^{A}v\|_{\dot Y}
  \le
  \gamma \| v \|_{\dot X}^{2}
  +
  \gamma \|\partial^{A}v\|_{\dot Y}^{2}
\end{equation*}
\begin{equation*}
  \||x|^{-1}f\overline{v}\|_{L^{1}}
  \le
  \|f\|_{\dot Y^{*}}\|v\|_{\dot X}
  \le \delta\|v\|_{\dot X}^{2}
  +\delta^{-1}\|f\|_{\dot Y^{*}}^{2}.
\end{equation*}
In a similar way we have
\begin{equation*}
  \|g\partial^{A} v\|_{L^{1}}
  \le
  \|W(x)v\partial^{A} v\|_{L^{1}}
  +
  \|Z(x) (\partial^{A}v)^{2}\|_{L^{1}}
  +
  \|f\overline{\partial^{A}v}\|_{L^{1}},
\end{equation*}
and
\begin{equation*}
\begin{split}
  \|W(x)v\partial^{A} v\|_{L^{1}}
  \le&
  \||x|W_{L}\|_{\ell^{1}L^{\infty}}
  \|v\|_{\dot Y}\|\partial^{A}v\|_{\dot Y}
  +
  \||x|^{3/2}W_{S}\|_{\ell^{1}L^{2}L^{\infty}}
  \|v\|_{\dot X}\|\partial^{A}v\|_{\dot Y}
  \\
  \le&
  2\delta\|\partial^{A}v\|_{\dot Y}^{2}
  +
  \delta^{-1}\Gamma^{2}\|v\|_{\dot Y}^{2}
  +
  \delta^{-1}\gamma^{2}\|v\|_{\dot X}^{2},
\end{split}
\end{equation*}
\begin{equation*}
  \|Z(x) (\partial^{A}v)^{2}\|_{L^{1}}
  \le
  \||x|Z\|_{\ell^{1}L^{\infty}}
  \|\partial^{A}v\|_{\dot Y}^{2}
  \le
  \gamma\|\partial^{A}v\|_{\dot Y}^{2},
\end{equation*}
\begin{equation*}
  \|f\overline{\partial^{A}v}\|_{L^{1}}
  \le
  \delta\|\partial^{A}v\|_{\dot Y}^{2}
  +
  \delta^{-1}\|f\|_{\dot Y^{*}}^{2}.
\end{equation*}
Finally we have
\begin{equation*}
  \lambda^{1/2}\|g \overline{v}\|_{L^{1}}
  \le
  \lambda^{1/2}\|W v^{2}\|_{L^{1}}
  +
  \lambda^{1/2}\|Z v\partial^{A}v\|_{L^{1}}
  +
  \lambda^{1/2}\|f \overline{v}\|_{L^{1}}
\end{equation*}
and
\begin{equation*}
\begin{split}
  \lambda^{1/2}\|W v^{2}\|_{L^{1}}
  \le&
  \lambda^{1/2}
  \||x|W_{L}\|_{\ell^{1}L^{\infty}}\|v\|_{\dot Y}^{2}
  +
  \lambda^{1/2}
  \||x|^{3/2}W_{S}\|_{\ell^{1}L^{2}L^{\infty}}
  \|v\|_{\dot X}\|v\|_{\dot Y}
  \\
  \le&
  (\lambda^{1/2}\Gamma+\lambda \gamma)\|v\|_{\dot Y}^{2}
  +
  \gamma\|v\|_{\dot X}^{2},
\end{split}
\end{equation*}
\begin{equation*}
  \lambda^{1/2}\|Z v\partial^{A}v\|_{L^{1}}
  \le
  \lambda^{1/2}
  \||x|Z\|_{\ell^{1}L^{\infty}}
  \|v\|_{\dot Y}\|\partial^{A}v\|_{\dot Y}
  \le
  \lambda \gamma\|v\|_{\dot Y}^{2}
  +
  \gamma \|\partial^{A}v\|_{\dot Y}^{2},
\end{equation*}
\begin{equation*}
  \lambda^{1/2}\|f \overline{v}\|_{L^{1}}
  \le
  \delta \lambda\|v\|_{\dot Y}^{2}
  +
  \delta^{-1}\|f\|_{\dot Y^{*}}^{2}.
\end{equation*}
Summing up, we get
\begin{equation*}
\begin{split}
  \||x|^{-1}gv\|_{L^{1}}
  +&
  \|g\partial^{A} v\|_{L^{1}}
  +
  \lambda^{1/2}\|g \overline{v}\|_{L^{1}}
  \le
  (2\delta+3\gamma+\delta^{-1}\gamma^{2})\|v\|_{\dot X}^{2}
  \\
  +
  (2\delta^{-1}
  &
  \Gamma^{2}+
    \lambda^{1/2}\Gamma+2\lambda \gamma+\delta \lambda)
    \|v\|_{\dot Y}^{2}
  +
  3 (\delta + \gamma) \|\partial^{A}v\|_{\dot Y}^{2}
  +
  3\delta^{-1}\|f\|_{\dot Y^{*}}^{2}
\end{split}
\end{equation*}
Recalling that $c_{0}\ge1$ is the constant in
\eqref{eq:firststep}, we require that
\begin{equation}\label{eq:constrconst}
  \textstyle
  \delta=\frac{1}{16c_{0}},
  \qquad
  \gamma\le\frac{1}{16c_{0}},
  \qquad
  |\lambda|\ge2^{8}c_{0}^{2}\Gamma^{2}+2
  +c\||x|\widehat{B}\|_{\ell^{1}L^{\infty}}^{2}
\end{equation}
(note that this implies also \eqref{eq:firstla}
and $\lambda\ge2$)
and one checks that
\begin{equation*}
  \textstyle
  2\delta+3\gamma+\delta^{-1}\gamma^{2}\le \frac{1}{2c_{0}},
  \qquad
  3(\delta+\gamma) \le \frac{1}{2c_{0}}
\end{equation*}
and
\begin{equation*}
  \textstyle
  2\delta^{-1} \Gamma^{2}+
    \lambda^{1/2}\Gamma+2\lambda \gamma+\delta \lambda
  \le \frac{\lambda}{2c_{0}}.
\end{equation*}
Thus with the choices \eqref{eq:constrconst} we have
\begin{equation*}
  \textstyle
  \||x|^{-1}gv\|_{L^{1}}
  +
  \|g\partial^{A} v\|_{L^{1}}
  +
  \lambda^{1/2}\|g \overline{v}\|_{L^{1}}
  \le
  \frac{1}{2c_{0}}\|v\|_{\dot X}^{2}
  +
  \frac{\lambda}{2c_{0}} \|v\|_{\dot Y}^{2}
  +
  \frac{1}{2c_{0}}\|\partial^{A}v\|_{\dot Y}^{2}
  +
  3\delta^{-1}\|f\|_{\dot Y^{*}}^{2}
\end{equation*}
and plugging this into \eqref{eq:firststep}, and absorbing
the first three terms at the right from the left side of the 
inequality, we conclude that
\begin{equation}\label{eq:secondstep}
  \textstyle
  \|v\|_{\dot X}^{2}
  +
  \lambda\|v\|_{\dot Y}^{2}
  +
  \|\partial^{A}v\|_{\dot Y}^{2}
  \le
  c_{1}\|f\|_{\dot Y^{*}}^{2}.
\end{equation}

Note that if we consider the case of \emph{negative}
$\lambda$, starting from 
estimate \eqref{eq:aux4} instead of \eqref{eq:firststep}
and applying the same argument, we obtain a similar 
estimate, provided $\lambda$ satisfies
\eqref{eq:constrconst}. Recalling also that by assumption
$|\epsilon|\le|\lambda|$, we see that the proof of
Proposition \ref{pro:resestlarge} is concluded.

\subsection{Small frequencies} \label{sub:the_sma_fre_reg}

We now consider the remaining case of small requencies.
In this region we shall follow an indirect approach.
We consider an operator $L$ defined by
\begin{equation}\label{eq:defL}
  Lv:=
  -I_{4}\Delta v
  -
  W(x)v
  -
  i\sum_{j=1}^{3}Z_{j}^{*}(x)\,\partial_{j}v
  -
  i\sum_{j=1}^{3}\partial_{j}(Z_{j}(x)v)
\end{equation}
with $W=W^{*}$, and we assume that $L$
is selfadjoint on $L^{2}(\mathbb{R}^{3};\mathbb{C}^{4})$.
(Note that in the case of small frequencies it is not
useful to handle the magnetic part $A$ of the potential
separately).
In order to estimate the resolvent operator of $L$
\begin{equation*}
  R(z):=(L-z)^{-1}=
  (-I_{4}\Delta-W-iZ^{*} \cdot \partial-i \partial \cdot Z-z)^{-1}
\end{equation*}
we use the (Lippmann--Scwinger) representation of $R(z)$
\begin{equation}\label{eq:LSE}
  R(z)=R_{0}(z)(I_{4}-K(z))^{-1},
  \qquad
  K(z):=[W+iZ^{*} \cdot \partial+i \partial \cdot Z]R_{0}(z)
\end{equation}
in terms of the free resolvent
\begin{equation*}
  R_{0}(z)=I_{4}(-\Delta-z)^{-1}.
\end{equation*}

We recall a few (more or less standard) facts on the
free resolvent $R_{0}(z)$. For 
$z\in \mathbb{C} \setminus[0,+\infty)$, $R_{0}(z)$
is a holomorphic map with values in the space of bounded 
operators $L^{2}\to H^{2}$ and satisfies an estimate
\begin{equation}\label{eq:freeest}
  \|R_{0}(z)f\|_{\dot X}
  +
  |z|^{\frac12}\|R_{0}(z)f\|_{\dot Y}
  +
  \|\partial R_{0}(z)f\|_{\dot Y}
  \lesssim
  \|f\|_{\dot Y^{*}}
\end{equation}
with an implicit constant independent of $z$
(a proof of this estimate is actually contained in the
previous section since for vanishing potentials there is no
restriction on $\lambda$; for a detailed proof see e.g.
\cite{CacDanLuc}). When $z$ approaches the spectrum
of the Laplacian $\sigma(-\Delta)=[0,+\infty)$, it is possible
to define two limit operators
\begin{equation*}
  R(\lambda\pm i 0)=
  \lim_{\epsilon \downarrow0}R(\lambda\pm i \epsilon),
  \qquad
  \epsilon>0, \lambda\ge0
\end{equation*}
but the two limits are different if $\lambda>0$. These limits
exist in the norm of bounded operators
from the weighted $L^{2}_{s}$ space with norm
$\|\bra{x}^{s}f\|_{L^{2}}$ to the weighted Sobolev space
$H^{2}_{-s'}$ with norm 
$\sum_{|\alpha|\le2}\|\bra{x}^{-s'}\partial^{\alpha}f\|_{L^{2}}$,
for arbitrary $s,s'>1/2$ (see \cite{Agmon75-a}).
Since these spaces are dense in $\dot Y^{*}$ and
$\dot Y$ (or $\dot X$) respectively, and estimate
\eqref{eq:freeest} is uniform in $z$, one obtains that
\eqref{eq:freeest} is valid also for the limit operators
$R_{0}(\lambda\pm i0)$. In the following we shall write
simply $R_{0}(z)$, $z\in \overline{\mathbb{C}^{\pm}}$, 
to denote either one of 
the extended operators $R_{0}(\lambda\pm i \epsilon)$ with
$\epsilon\ge0$, defined on the closed upper (resp.~lower) complex
half--plane. Note also that the map $z \mapsto R_{0}(z)$
is continuous with respect to the operator norm of
bounded operators $L^{2}_{s}\to H^{2}_{-s'}$, 
for every $s,s'>1/2$, and from this fact one easily
obtains that it is also continuous with respect to
the operator norm of bounded operators
from $\dot Y^{*}\to H^{2}_{-s'}$. 

Thus in particular
\begin{equation*}
  R_{0}(z):\dot Y^{*}\to \dot X,
  \qquad
  \partial R_{0}(z):\dot Y^{*}\to \dot Y
\end{equation*}
are uniformly bounded operators for all
$z\in \overline{\mathbb{C}^{\pm}}$;
note also the formula
\begin{equation*}
  \Delta R_{0}(z)=-I_{4}-z R_{0}(z)
\end{equation*}
Moreover, for any smooth cutoff
$\phi\in C^{\infty}_{c}(\mathbb{R}^{3})$ and all 
$z\in \overline{\mathbb{C}^{\pm}}$,
the map 
$z\mapsto\phi R_{0}(z)$ is 
continuous w.r.to the norm of bounded operators
$\dot Y^{*}\to H^{2}$,
and hence 
\begin{equation*}
  \phi R_{0}(z):\dot Y^{*}\to L^{2}
  \quad\text{and}\quad 
  \phi \partial R_{0}(z):\dot Y^{*}\to L^{2}
  \quad \text{are compact operators.}
\end{equation*}
Similarly one gets that
$z\mapsto\phi R_{0}(z)$ is 
continuous w.r.to the norm of bounded operators
$\dot Y^{*}\to L^{\infty}_{|x|} L^{2}_{\omega}$ and
\begin{equation*}
  \phi R_{0}(z):\dot Y^{*}\to L^{\infty}_{|x|} L^{2}_{\omega}
  \quad \text{is a compact operator.}
\end{equation*}

In order to invert the operator $I-K(z)$
we shall apply Fredholm theory. An essential step is the
following compactness result:

\begin{lemma}[]\label{lem:compact}
  Let $z\in \overline{\mathbb{C}^{\pm}}$ and assume
  $W,Z$ satisfy
  \begin{equation}\label{eq:assWZ}
    N:=
    \||x|^{3/2}(W+i( \partial \cdot Z))\|_{\ell^{1}L^{2}L^{\infty}}
    +
    \||x|Z\|_{\ell^{1}L^{\infty}}<\infty.
  \end{equation}
  Then
  $K(z)=(W+iZ^{*} \cdot \partial+i \partial \cdot Z)R_{0}(z)$
  is a compact operator on $\dot Y^{*}$,
  and the map $z \mapsto K(z)$ is continuous with respect
  to the norm of bounded operators on $\dot Y^{*}$.
  % $L^{2}_{\rho}$.
\end{lemma}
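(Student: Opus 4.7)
The plan is to exhibit $K(z)$ as an operator-norm limit of compact operators obtained by truncating the coefficients to bounded annuli. First I would symmetrize the derivative: writing $\partial\cdot(Zv)=(\partial\cdot Z)v+Z\cdot \partial v$ and setting $\widetilde W:=W+i\,\partial\cdot Z$, $\widetilde Z:=Z+Z^{*}$, one obtains
\begin{equation*}
  K(z)=\widetilde W\, R_{0}(z)+i\,\widetilde Z\cdot \partial R_{0}(z),
\end{equation*}
so that both summands consist of a matrix multiplier composed with one of the uniformly bounded resolvent operators $R_{0}(z):\dot Y^{*}\to \dot X$ or $\partial R_{0}(z):\dot Y^{*}\to \dot Y$ recorded in \eqref{eq:freeest}. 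The hypothesis \eqref{eq:assWZ} reads $\||x|^{3/2}\widetilde W\|_{\ell^{1}L^{2}L^{\infty}}+\||x|\widetilde Z\|_{\ell^{1}L^{\infty}}\lesssim N$.

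Second, I would establish the two multiplication estimates
\begin{equation*}
  \|\widetilde W\, u\|_{\dot Y^{*}}\lesssim\||x|^{3/2}\widetilde W\|_{\ell^{1}L^{2}L^{\infty}}\|u\|_{\dot X},
  \qquad
  \|\widetilde Z\, v\|_{\dot Y^{*}}\lesssim\||x|\widetilde Z\|_{\ell^{1}L^{\infty}}\|v\|_{\dot Y},
\end{equation*}
by a shell-by-shell H\"older argument on $C_{j}=\{2^{j}\le|x|<2^{j+1}\}$, where all homogeneous weights become bounded by absolute constants and the outer $\ell^{1}$ sum in $j$ matches the $\ell^{1}$ norm of the coefficient. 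This immediately shows that $K(z):\dot Y^{*}\to \dot Y^{*}$ is bounded uniformly in $z\in\overline{\mathbb{C}^{\pm}}$ with norm $\lesssim N$. Applying the same estimates to $(1-\chi_{R})\widetilde W$ and $(1-\chi_{R})\widetilde Z$, where $\chi_{R}$ is a smooth radial cutoff of $\{R^{-1}\le|x|\le R\}$, and setting $K_{R}(z):=\chi_{R}\widetilde W\,R_{0}(z)+i\,\chi_{R}\widetilde Z\cdot \partial R_{0}(z)$, yields
\begin{equation*}
  \sup_{z}\|K(z)-K_{R}(z)\|_{\dot Y^{*}\to \dot Y^{*}}\lesssim \||x|^{3/2}(1-\chi_{R})\widetilde W\|_{\ell^{1}L^{2}L^{\infty}}+\||x|(1-\chi_{R})\widetilde Z\|_{\ell^{1}L^{\infty}},
\end{equation*}
and the right hand side tends to $0$ as $R\to\infty$ by absolute continuity of the $\ell^{1}$-sum. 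Thus it suffices to prove compactness of $K_{R}(z)$ and continuity of $z\mapsto K_{R}(z)$ in operator norm, for each fixed $R$.

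For the final step I would invoke the local compactness and continuity that the excerpt already supplies: for any $\phi\in C^{\infty}_{c}(\mathbb{R}^{3})$ the operators $\phi R_{0}(z):\dot Y^{*}\to L^{\infty}_{|x|}L^{2}_{\omega}$ and $\phi\,\partial R_{0}(z):\dot Y^{*}\to L^{2}$ are compact and depend norm-continuously on $z\in\overline{\mathbb{C}^{\pm}}$. Choosing $\phi\equiv 1$ on $\supp\chi_{R}$ and factoring $\chi_{R}\widetilde W\,R_{0}(z)=(\chi_{R}\widetilde W)(\phi R_{0}(z))$, together with the analogous factorization of the derivative piece, reduces each summand of $K_{R}(z)$ to the composition of a bounded multiplier (landing in $\dot Y^{*}$ by a finite-sum shell estimate, since $\chi_{R}\widetilde W$ and $\chi_{R}\widetilde Z$ are supported in a finite union of dyadic shells) with a compact, $z$-continuous operator, hence itself compact and continuous in $z$. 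The main obstacle I anticipate is the bookkeeping of the shell-by-shell multiplication bounds so that the $\ell^{1}$-weighted norms in \eqref{eq:assWZ} match precisely with the powers of $|x|$ supplied by the $\dot X$ and $\dot Y$ targets of the resolvent; once these are in hand, compactness and continuity follow at once from the already established local properties of $R_{0}(z)$.
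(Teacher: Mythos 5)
Your proposal is correct and follows essentially the same route as the paper: symmetrize the derivative to write $K(z)=\widetilde W R_0(z)+i\widetilde Z\cdot\partial R_0(z)$, prove the two shell-by-shell multiplication estimates $\dot X\to\dot Y^*$ and $\dot Y\to\dot Y^*$, truncate the coefficients to a dyadic annulus to exhibit $K(z)$ as a uniform (in $z$) operator-norm limit of truncated pieces, and then obtain compactness and $z$-continuity of each truncated piece by factoring through the compact, norm-continuous localized resolvents $\phi R_0(z)$ and $\phi\,\partial R_0(z)$. This is exactly the decomposition $K=A_r+B_r$ with $A_r=\chi_r K$, $B_r=(1-\chi_r)K$ used in the paper; the only cosmetic difference is that you place the auxiliary cutoff $\phi$ on the resolvent side while the paper inserts a second cutoff $\chi_s$ and passes through the intermediate space $\dot X$.
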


\begin{proof}%[of ...]
  We decompose $K$
  as follows. Let $\chi\in C^{\infty}_{c}(\mathbb{R}^{3})$
  be a cutoff function equal to 1 for $|x|\le1$ and to 0 for
  $|x|\ge2$. Define for $r>2$
  \begin{equation*}
    \chi_{r}(x)=\chi(x/r)(1-\chi(rx))
  \end{equation*}
  so that $\chi_{r}$ vanishes for $|x|\ge2r$ and also for
  $|x|\le1/r$, and equals 1 when $2/r\le|x|\le r$.
  Then we split
  \begin{equation*}
    K=A_{r}+B_{r}
  \end{equation*}
  where
  \begin{equation*}
    A_{r}(z)=\chi_{r}\cdot K(z),
    \qquad
    B_{r}(z)=(1-\chi_{r})\cdot K(z).
  \end{equation*}

  First we 
  show that $A_{r}$ is a compact operator on $\dot Y^{*}$.
  Indeed, for $s>2r>4$ we have $\chi_{r}\chi_{s}=\chi_{r}$ and
  we can write
  \begin{equation*}
    A_{r}=\chi_{s}A_{r}=
    \chi_{s}(W+i( \partial \cdot Z))\chi_{r}R_{0}(z)
    +
    i
    \chi_{s}(Z+Z^{*}) \cdot \chi_{r}\partial R_{0}(z).
  \end{equation*}
  By the estimate
  \begin{equation}\label{eq:estVN}
    \|(W+i( \partial \cdot Z))v\|_{\dot Y^{*}}\le
    \||x|^{3/2}(W+i( \partial \cdot Z))\|_{\ell^{1}L^{2}L^{\infty}}
    \|v\|_{\dot X}
    \le
    N\|v\|_{\dot X}
  \end{equation}
  we see that multiptlication by $W+i( \partial \cdot Z)$ is a bounded operator
  from $\dot X$ to $\dot Y^{*}$.
  Moreover, multiplication by $\chi_{s}$ is a bounded
  operator $L^{\infty}_{|x|}L^{2}_{\omega}\to \dot X$
  and the operator
  $\chi_{r}R_{0}:\dot Y^{*}\to L^{\infty}_{|x|}L^{2}_{\omega}$
  is compact as remarked above. 
  A similar argument applies to the second term in $A_{r}$,
  using the estimate
  \begin{equation}\label{eq:estZN}
    \|Zv\|_{\dot Y^{*}}
    \le
    \||x|Z\|_{\ell^{1}L^{\infty}}\|v\|_{\dot Y}
    \le N\|v\|_{\dot Y}
  \end{equation}
  and compactness of $\chi_{r}\partial R_{0}:\dot Y^{*}\to L^{2}$.
  Summing up, we obtain that $A_{r}:\dot Y^{*}\to \dot Y^{*}$
  is a compact operator.
  Similarly, we see that $z \mapsto A_{r}(z)$ is continuous
  with respect to the norm of bounded operators on $\dot Y^{*}$.

  Then to conclude the proof it is sufficient to show that
  $B_{r}\to0$ in the norm of bounded operators
  on $\dot Y^{*}$, uniformly in $z$.
  We have, as in
  \eqref{eq:estVN}--\eqref{eq:estZN},
  \begin{equation*}
    \|B_{r}v\|_{\dot Y^{*}}
    \le
    N_{r}
    (\|R_{0}\|_{\dot Y^{*}\to \dot X}+
    \|\partial R_{0}\|_{\dot Y^{*}\to \dot Y})
    \|v\|_{\dot Y^{*}}
  \end{equation*}
  where
  \begin{equation*}
    N_{r}:=
    \||x|^{3/2}(1-\chi_{r})(W+i( \partial \cdot Z))
      \|_{\ell^{1}L^{2} L^{\infty}}
    +
    2\||x|(1-\chi_{r})Z\|_{\ell^{1}L^{\infty}}.
  \end{equation*}
  Since $N_{r}\to0$ as $r\to \infty$,
  we obtain that $\|B_{r}\|_{\dot Y^{*}\to \dot Y^{*}}\to0$.
\end{proof}

We now study the injectivity of $I-K(z):\dot Y^{*}\to \dot Y^{*}$.
Note that if $f\in \dot Y^{*}$ satisfies
\begin{equation*}
  (I_{4}-K(z))f=0
\end{equation*}
then setting $v=R_{0}(z) f$ 
by the properties of $R_{0}(z)$ we have
$v\in H^{1}_{loc} \cap \dot X$, 
$\nabla v\in \dot Y$,
$v\in H^{2}_{loc}(\mathbb{R}^{3}\setminus0)$,
$\Delta v\in \dot Y+\dot Y^{*}$
(or $\Delta v\in\dot Y^{*}$ if $z=0$)
and if $z\neq0$ we have also $v\in\dot Y$.
In particular, $v$ is a solution of the
equation 
\begin{equation*}
  (L-z)v=0.
\end{equation*}
For $z$ outside the spectrum of $L$ it is easy to check
that this implies $v=f=0$:

\begin{lemma}[]\label{lem:eigvcomplex}
  Let $W,Z,K(z)$ be as in Lemma \ref{lem:compact} and
  $L=-I_{4}\Delta-W-iZ^{*} \cdot \partial-i \partial \cdot Z$.
  If $f\in \dot Y^{*}$ satisfies
  \begin{equation*}
    (I_{4}-K(z))f=0
  \end{equation*}
  for some $z\not\in \sigma(L)$, then $f=0$.
\end{lemma}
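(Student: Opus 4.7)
The natural strategy is to reduce the equation $(I-K(z))f=0$ to an equation for the function $v:=R_0(z)f$ and then exploit the self-adjointness of $L$. Concretely, the identity $K(z)f=\tilde Wv$ with $\tilde Wv:=Wv+iZ^*\cdot\partial v+i\partial\cdot(Zv)$ shows that $f=\tilde Wv$, and since $(-\Delta-z)v=f$, I deduce $(L-z)v=0$ in the sense of distributions. By the recalled properties of $R_0(z)$, the solution $v$ lies in $\dot X\cap H^{1}_{\mathrm{loc}}$, with $\partial v\in\dot Y$ and $v\in H^{2}_{\mathrm{loc}}(\mathbb{R}^{3}\setminus 0)$. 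The whole task is then to upgrade this weak/low-decay solution to a true element of $D(L)$ on which the resolvent of $L$ acts, because once $v\in D(L)\subset L^{2}$ the condition $z\notin\sigma(L)$ together with self-adjointness immediately forces $v=0$, and hence $f=\tilde Wv=0$.

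To realize this upgrade I would truncate rather than bootstrap: fix $\chi\in C^\infty_c(\mathbb{R}^{3})$ equal to $1$ near the origin and set $\chi_n(x):=\chi(x/n)$, $v_n:=\chi_n v$. Each $v_n$ has compact support and lies in $H^{1}$, and (after, if necessary, a further inner cutoff removing a small ball around $0$ and a standard limit) in $D(L)=H^{2}$. Computing the commutator gives
\[
(L-z)v_n=\chi_n(L-z)v+[L,\chi_n]v=[L,\chi_n]v,
\]
where $[L,\chi_n]v$ is supported in the annulus $\{n\le|x|\le 2n\}$ and consists of the terms $(\Delta\chi_n)v$, $2\nabla\chi_n\cdot\nabla v$, together with first-order contributions involving $Z$ multiplied by $\nabla\chi_n$.

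The key step is to show $\|[L,\chi_n]v\|_{L^{2}}\to 0$ as $n\to\infty$. Using $|\nabla\chi_n|\lesssim n^{-1}$, $|\Delta\chi_n|\lesssim n^{-2}$ and the membership $v\in\dot X$, $\partial v\in\dot Y$, the dyadic estimates $\|v\|_{L^{2}(\{|x|\sim n\})}\lesssim n^{3/2}$ and $\|\partial v\|_{L^{2}(\{|x|\le 2n\})}\lesssim n^{1/2}$ give exactly the required decay, and the $Z$-commutator is absorbed by the decay hypotheses on $Z$ encoded in the hypothesis \eqref{eq:assWZ}. Since $z\notin\sigma(L)$ and $L$ is self-adjoint, $(L-z)^{-1}$ is bounded on $L^{2}$, so
\[
\|v_n\|_{L^{2}}=\|(L-z)^{-1}[L,\chi_n]v\|_{L^{2}}\lesssim\|[L,\chi_n]v\|_{L^{2}}\longrightarrow 0.
\]
Because $v\in L^{2}_{\mathrm{loc}}$ and $v_n\to v$ pointwise (monotonically in support), this forces $v\equiv 0$ on every compact set, hence $f=\tilde Wv=0$.

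The main obstacle I anticipate is not the self-adjointness argument itself but the careful book-keeping needed to secure $v_n\in D(L)$: the free resolvent only provides $H^{2}$-regularity away from the origin, so a possible singularity of $v$ at $0$ must be handled either by a second cutoff near $0$ (whose commutator must also be controlled, using the hypothesis that $W,Z$ are not too singular there, i.e.\ the $\ell^{1}L^{2}L^{\infty}$ and $\ell^{1}L^{\infty}$ weighted norms in \eqref{eq:assWZ}) or by observing, via $(-\Delta-z)v=\tilde Wv$, that $\Delta v\in L^{2}_{\mathrm{loc}}$ once $\tilde Wv$ is controlled locally. Calibrating the annular commutator estimates so that the $\dot X$- and $\dot Y$-controls on $v$ exactly cancel the $n^{-1}$, $n^{-2}$ weights from the cutoff is the technical heart of the argument.
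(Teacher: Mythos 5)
Your proof follows the same strategy as the paper: set $v=R_0(z)f$, truncate with $\chi(x/M)$, show the resulting commutator $[L,\chi_M]v$ tends to $0$ in $L^2$ using the $\dot X$-bound on $v$ and the $\dot Y$-bound on $\partial v$ against the $M^{-1}$, $M^{-2}$ decay of $\nabla\chi_M$, $\Delta\chi_M$, and then invoke boundedness of $(L-z)^{-1}$ on $L^2$. The domain issue you flag does not actually require an inner cutoff: since $(L-z)v=0$ holds in $\mathcal{D}'(\mathbb{R}^3)$ (because $(-\Delta-z)v=f$ globally and $f=Wv+iZ^*\!\cdot\partial v+i\partial\cdot(Zv)$), one has $(L-z)v_M=f_M$ in $\mathcal{D}'(\mathbb{R}^3)$ with $v_M,f_M\in L^2$, and then density of $C^\infty_c(\mathbb{R}^3)$ in $D(L)=H^2$ together with self-adjointness gives $v_M\in D(L^*)=D(L)$ automatically.
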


\begin{proof}%[of ...]
  Let $v=R_{0}(z)f$,
  fix a compactly supported smooth function $\chi$ which is
  equal to 1 for $|x|\le1$, and for $M>1$ consider
  $v_{M}:=v(x)\chi(x/M)$. Then $v_{M}\in L^{2}$ and
  \begin{equation*}
    \textstyle
    (L-z)v_{M}=
    \frac1M \nabla\chi(\frac xM)(2\nabla v+i(Z+Z^{*}) v)
    +\frac1{M^{2}} \Delta \chi(\frac xM)v
    =:f_{M}.
  \end{equation*}
  We have, for $\delta\in(1,\frac12)$,
  using the estimate $|Z|\lesssim|x|^{-1}$,
  \begin{equation*}
  \begin{split}
    \|f_{M}\|_{L^{2}}
    \lesssim
    &
    M^{\delta-\frac12}
    \left(
      \||x|^{-\frac12-\delta}\nabla v\|_{L^{2}(|x|\ge M)}
      +
      \||x|^{-\frac32-\delta}v\|_{L^{2}(|x|\ge M)}
    \right)
    \\
    \lesssim
    &
    M^{\delta-\frac12}
    (\|\nabla v\|_{\dot Y}+\|v\|_{\dot X})
    \end{split}
  \end{equation*}
  uniformly in $M$, so that $f_{M}\to 0$ in $L^{2}$
  as $M\to \infty$.
  Since $v_{M}=R_{0}(z)f_{M}$ and $R_{0}(z)$ is a bounded
  operator on $L^{2}$, we conclude that $v=f=0$.
\end{proof}

The hard case is of course $z\in \sigma(L)$. Then we have the
following result, in which we write simply
\begin{equation*}
  R_{0}(\lambda)
  \quad\text{instead of}\quad 
  R_{0}(\lambda\pm i0)
\end{equation*}
since the computations for the two cases are identical.

\begin{lemma}[]\label{lem:eigv}
  Assume $W=W^{*}$ and $Z$ satisfy for some $\delta>0$
  \begin{equation}\label{eq:WZdelta}
    \||x|^{2}\bra{x}^{\delta}(W+i\partial \cdot Z)\|
    _{\ell^{1}L^{\infty}}
    +
    \||x|\bra{x}^{\delta}Z\|_{\ell^{1}L^{\infty}}
    <\infty
  \end{equation}
  and 
  $L=-\Delta I_{4}-W-i \partial \cdot Z-iZ^{*} \cdot \partial $ 
  is a non negative selfadjoint operator on $L^{2}$.
  Let $f\in \dot Y^{*}$ be such that, for some
  $\lambda\ge0$,
  \begin{equation*}
    (I_{4}-K(\lambda))f=0,
    \qquad
    K(\lambda)
    :=(W+i \partial \cdot Z+iZ^{*} \cdot\partial)R_{0}(\lambda).
  \end{equation*}
  Then in the case $\lambda>0$ we have $f=0$,
  while in the case $\lambda=0$ we have
  $|x|^{3/2}f\in L^{2}$ and the function $v=R_{0}(0)f$
  belongs to $H^{2}_{loc}(\mathbb{R}^{3}\setminus0)\cap \dot X$
  with $\partial v\in \dot Y$,
  solves $Lv=0$ and satisfies 
  $|x|^{-\frac 12-\delta'}v\in L^{2}$  and
  $|x|^{\frac 12-\delta'}\partial v\in L^{2}$ 
  for any $\delta'>0$.
\end{lemma}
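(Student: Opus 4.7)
The plan is to analyze $v:=R_{0}(\lambda)f$. The hypothesis $(I_{4}-K(\lambda))f=0$ rewrites as $f=(W+iZ^{*}\cdot\partial+i\partial\cdot Z)v$, so that $(L-\lambda)v=0$ in the distributional sense; and from the properties of $R_{0}(\lambda\pm i0)$ recalled just before the statement, $v\in\dot X$, $\partial v\in\dot Y$, $v\in H^{2}_{loc}(\mathbb{R}^{3}\setminus 0)$, and $v\in\dot Y$ if $\lambda\neq 0$.

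The first step, common to both cases, is a weighted bootstrap. Using \eqref{eq:WZdelta} and the weighted version of the multiplication estimates \eqref{eq:estVN}--\eqref{eq:estZN} (replacing $W,Z$ by $\bra{x}^{\delta}W,\bra{x}^{\delta}Z$), one gets
\[
  \|\bra{x}^{\delta}f\|_{\dot Y^{*}}
  \lesssim
  \||x|^{2}\bra{x}^{\delta}(W+i\partial\cdot Z)\|_{\ell^{1}L^{\infty}}\|v\|_{\dot X}
  +
  \||x|\bra{x}^{\delta}Z\|_{\ell^{1}L^{\infty}}\|\partial v\|_{\dot Y}.
\]
Combining $\bra{x}^{\delta}f\in\dot Y^{*}$ with the weighted limiting absorption estimates of Agmon \cite{Agmon75-a} applied to $R_{0}(\lambda\pm i0)$ yields $\bra{x}^{-1/2-\sigma}v\in L^{2}$ and $\bra{x}^{1/2-\sigma}\partial v\in L^{2}$ for every small $\sigma>0$.

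For $\lambda>0$ the conclusion comes from a Rellich--Kato uniqueness argument. Since $v=R_{0}(\lambda\pm i0)f$ inherits the Sommerfeld outgoing (resp.~incoming) radiation condition at infinity, satisfies $(L-\lambda)v=0$ with short--range coefficients, and has the $L^{2}$--weighted decay just obtained, Rellich's theorem forces $v\equiv 0$ outside a large ball. Unique continuation applied component by component to the scalar equations $(-\Delta-\lambda)v_{\alpha}=f_{\alpha}$ then yields $v\equiv 0$ globally, and consequently $f=(W+iZ^{*}\cdot\partial+i\partial\cdot Z)v=0$.

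For $\lambda=0$ the same bootstrap directly produces the quantitative statements of the lemma: the weighted bounds on $v$ and $\partial v$ for arbitrary $\delta'>0$ are the output of the weighted free resolvent estimates, and $|x|^{3/2}f\in L^{2}$ follows by inserting these bounds into the pointwise inequality $|f|\lesssim|x|^{-2}\bra{x}^{-\delta}(|v|+|x||\partial v|)$ implied by \eqref{eq:WZdelta}. The regularity $v\in H^{2}_{loc}(\mathbb{R}^{3}\setminus 0)$ and the identity $Lv=0$ then come from elliptic regularity applied to $-\Delta v=f$ away from the origin, using $v=R_{0}(0)f$. The main obstacle is the $\lambda>0$ step: the decay in \eqref{eq:WZdelta} is borderline short--range, so closing the bootstrap requires exploiting the extra weight $|x|$ carried by the first--order term $Z$ relative to $W$; a cleaner alternative is to pass to the squared operator, reducing the problem to a second--order scalar Schr\"odinger--type equation for which the classical absence of embedded positive eigenvalues applies directly.
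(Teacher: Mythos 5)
Your proposal follows the same overall architecture as the paper — set $v=R_{0}(\lambda)f$, bootstrap weighted decay from the hypothesis \eqref{eq:WZdelta}, rule out $\lambda>0$ by an absence-of-embedded-eigenvalues argument, and for $\lambda=0$ extract the resonance properties — but the two substantive steps are carried out with different tools, and in both places the argument as written has gaps.

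For the weighted decay step, the paper does not cite Agmon's weighted limiting-absorption principle; it derives the decay from scratch via the Morawetz-type identities \eqref{eq:id1}--\eqref{eq:id2} with $\psi'=\chi$, $\phi=-\chi'$, obtaining a radiation estimate \eqref{eq:radest} and then, choosing $\chi=|x|^{\delta}$, the bounds \eqref{eq:finaldelta} and \eqref{eq:deltaL2} on $|x|^{(\delta-1)/2}\partial_{S}v$, $|x|^{(\delta-1)/2}\partial v$, $\sqrt{\lambda}|x|^{(\delta-1)/2}v$. This is not merely stylistic: at $\lambda=0$ Agmon's LAP is not available (the kernel of $R_{0}(0)$ is the Newton potential $|x-y|^{-1}$, not oscillatory), and the paper replaces it by Stein--Weiss weighted boundedness of $R_{0}(0):\dot L^{2,s}\to\dot L^{2,s-2}$ and $\partial R_{0}(0):\dot L^{2,s}\to\dot L^{2,s-1}$, followed by a finite iteration of the mapping properties \eqref{eq:Kbdd} to reach $f\in\dot L^{2,3/2}$. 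Your phrase ``the same bootstrap directly produces the quantitative statements'' glosses over exactly this $\lambda=0$ degeneracy, and also over the fact that a single application of \eqref{eq:WZdelta} only improves the weight by $\delta$ (so the iteration must be run a finite number of times). Note also that the lemma's conclusion involves the homogeneous weight $|x|^{-1/2-\delta'}$, singular at $0$, whereas the Agmon-type estimate only yields $\bra{x}^{-1/2-\sigma}v\in L^{2}$; near the origin one must additionally use $v\in\dot X$, which you do not mention.

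For $\lambda>0$, you propose Rellich plus unique continuation. This is a plausible classical route, but you do not close it, and you say so yourself (``the main obstacle is the $\lambda>0$ step \dots a cleaner alternative is to pass to the squared operator''). The delicate point is that $L$ contains the first-order coupling terms $iZ^{*}\cdot\partial+i\partial\cdot Z$, which make the ``component-by-component'' unique continuation you invoke inaccurate — the right-hand side $f_{\alpha}$ in $(-\Delta-\lambda)v_{\alpha}=f_{\alpha}$ depends on $\partial v$, so the scalar equations do not decouple, and a Carleman estimate adapted to diagonal-principal-part systems with first-order perturbations is required. The paper sidesteps this by quoting the Koch--Tataru theorem (Theorem~8 of \cite{KochTataru06-a}), which handles precisely these gradient perturbations, and devotes a substantial part of the proof to verifying that $W$ and $Z$ satisfy conditions A.1--A.3 of that paper (including the decay of the low-frequency part $S_{<R}Z$). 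This verification is entirely absent from your proposal and is not automatic. As it stands, the $\lambda>0$ case of your argument is not complete.
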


\begin{proof}%[of ...]
  Defining as in the previous proof $v=R_{0}(\lambda)f$,
  we see that $v$ solves
  \begin{equation}\label{eq:eqvg}
    \Delta I_{4}v+\lambda v+g=0,
    \qquad
    g:=Wv+i Z^{*} \cdot \partial v +i \partial \cdot Zv.
  \end{equation}
  Then given a radial function $\chi\ge0$ to be precised later, we
  apply again identities \eqref{eq:id1}, \eqref{eq:id2}
  with the choices
  \begin{equation*}
    \psi'=\chi,
    \qquad
    \phi=-\chi'
  \end{equation*}
  so that in particular $\Delta \psi+\phi=\frac{2}{|x|}\chi$.
  We sum the two identities and integrate on a ball
  $B(0,R)$; it is easy to check that the boundary terms
  tend to 0 as $R\to \infty$, provided $\chi$ does not grow
  to fast ($\chi(x)\lesssim|x|$ is enough).
  After straightforward computations
  (see Proposition 3.1 of \cite{CacDanLuc} for a similar
  argument), we arrive at the 
  following \emph{radiation estimate}:
  \begin{equation}\label{eq:radest}
    \textstyle
    \int \chi'|\partial_{S}v|^{2}
    +
    2(\frac{\chi}{|x|}-\chi')|(\partial v)_{T}|^{2}
    -
    \int \Delta(\frac{\chi}{|x|})|v|^{2}
    =
    \Re \int\chi g(\frac{2}{|x|}\overline{v}+
      2 \widehat{x}\cdot\overline{\partial_{S}v})
  \end{equation}
  where we denoted the "Sommerfeld" gradient of $v$ with
  \begin{equation*}
    \partial_{S}v:=\partial v-i \sqrt{\lambda}\widehat{x}v,
    \qquad
    \widehat{x}=x/|x|
  \end{equation*}
  and the tangential component of $\partial v$ with
  \begin{equation*}
    |(\partial v)_{T}|^{2}:=
    |\partial v|^{2}-|\widehat{x}\cdot \partial v|^{2}.
  \end{equation*}

  We now estimate the right hand side of \eqref{eq:radest}.
  We have
  \begin{equation*}
    \textstyle
    |\Re\int\chi g \frac{\overline{v}}{|x|}|
    \le
    \|\chi(W+i (\partial \cdot Z))|x|^{-1}|v|^{2}\|_{L^{1}}
    +
    2\|\chi Z|\partial v| |x|^{-1}v\|_{L^{1}}
  \end{equation*}
  \begin{equation*}
    \le
    \|\chi|x|(W+i(\partial \cdot Z))\|
      _{\ell^{1}L^{1}L^{2}}
    \|v\|_{\dot X}^{2}
    +
    2\|\chi|x|^{1/2}Z\|_{\ell^{1}L^{2}L^{\infty}}
    \|v\|_{\dot X}\|\nabla v\|_{\dot Y}
  \end{equation*}
  and similarly
  \begin{equation*}
    \textstyle
    |\Re\int \chi g \widehat{x}\cdot\overline{\partial_{S}v}|
    \le
    \|\chi(W+i(\partial \cdot Z)) v|\partial_{S}v|\|_{L^{1}}
    +
    \|\chi Z^{*} \cdot \partial v |\partial_{S}v|\|_{L^{1}}
  \end{equation*}
  \begin{equation*}
    \le
    \|\chi|x|^{3/2}(W+i(\partial \cdot Z))\|
      _{\ell^{1}L^{2}L^{\infty}}
    \|v\|_{\dot X}\|\partial_{S}v\|_{\dot Y}
    +
    \|\chi|x|Z\|_{\ell^{1}L^{\infty}}
    \|\partial v\|_{\dot Y}\|\partial_{S}v\|_{\dot Y}.
  \end{equation*}
  Since the quantities $\|v\|_{\dot X}$, $\|\partial v\|_{\dot Y}$
  and 
  $\|\partial_{S}v\|_{\dot Y}\le\|\partial v\|_{\dot Y}+
  \sqrt{\lambda}\|v\|_{\dot Y}$ are all estimated by
  $\|f\|_{\dot Y^{*}}$ (recall \eqref{eq:freeest}),
  we conclude
  \begin{equation}\label{eq:estgsmall}
    \textstyle
    \left|\Re \int\chi g(\frac{2}{|x|}\overline{v}+
      2 \widehat{x}\cdot\overline{\partial_{S}v})\right|
    \lesssim
    N_{\chi}^{2}\|f\|_{\dot Y^{*}}^{2}
  \end{equation}
  where
  \begin{equation*}
    N_{\chi}^{2}:=
    \|\chi|x|^{3/2}(W+i(\partial \cdot Z))\|
      _{\ell^{1}L^{2}L^{\infty}}
    +
    \|\chi|x|Z\|_{\ell^{1}L^{\infty}}.
  \end{equation*}
  Finally, if we choose 
  \begin{equation*}
    \chi(x)=|x|^{\delta}
    \quad\text{with}\quad 
    0<\delta\le 1
  \end{equation*}
  by \eqref{eq:radest} and \eqref{eq:estgsmall} we obtain,
  dropping a (nonnegative) term at the left,
  \begin{equation}\label{eq:finaldelta}
    \||x|^{(\delta-1)/2}\partial_{S}v\|_{L^{2}}
    +
    \||x|^{(\delta-3)/2}v\|_{L^{2}}
    \lesssim_{\delta}
    N_{\delta}\|f\|_{\dot Y^{*}}
  \end{equation}
  where by assumption
  \begin{equation*}
    N_{\delta}^{2}
    :=
    \||x|^{3/2+\delta}(W+i(\partial \cdot Z))\|
      _{\ell^{1}L^{2}L^{\infty}}
    +
    \||x|^{1+\delta}Z\|_{\ell^{1}L^{\infty}}<\infty.
  \end{equation*}

  Consider now the following identity, obtained using the
  divergence formula:
  \begin{equation*}
    \textstyle
    \int_{|x|= R}
    (|\partial v|^{2}+\lambda|v|^{2}-|\partial_{S}v|^{2})
    d \sigma
    =
    2\Re\int_{|x|\le R} i\sqrt{\lambda}\,
    \partial \cdot \langle v,\partial v\rangle
    =
    2\Re\int_{|x|\le R} i\sqrt{\lambda}\,
    \langle v,\Delta v\rangle
  \end{equation*}
  for arbitrary $R>0$. Substituting
  $\Delta v=-\lambda v-g$ from \eqref{eq:eqvg}
  and dropping two pure imaginary terms, we get
  \begin{equation*}
    \textstyle
    \int_{|x|= R}
    (|\partial v|^{2}+\lambda|v|^{2}-|\partial_{S}v|^{2})
    d \sigma
    =
    2\Re
    \int_{|x|\le R}
    \langle Z^{*} \cdot \partial v +\partial \cdot Zv,v\rangle
  \end{equation*}
  The last term can be written,
  again by the divergence formula,
  \begin{equation*}
    \textstyle
    =2\Re\int_{|x|\le R}\partial \langle Zv,v\rangle
    =2\sum_{j}\int_{|x|=R}
    \widehat{x}_{j}
    \langle Z_{j}v,v\rangle
    d \sigma,
    \qquad
    \widehat{x}_{j}=x_{j}/|x|.
  \end{equation*}
  By assumption $|Z|\lesssim|x|^{-1}$, hence for some $R_{0}>0$
  we have $\lambda>2|Z(x)|$ for all $|x|>R_{0}$, and the term in $Z$
  can be absorbed at the left of the identity. Summing up,
  we have proved that
  \begin{equation}\label{eq:equivna}
    \textstyle
    \int_{|x|=R}(|\partial v|^{2}+\lambda|v|^{2})d \sigma
    \le
    2\int_{|x|=R}|\partial_{S} v|^{2}d \sigma,
    \qquad
    R\ge R_{0}.
  \end{equation}
  Multiplying both sides by $|x|^{\delta-1}$,
  integrating in the radial direction from $R_{0}$ to $\infty$,
  and using \eqref{eq:finaldelta}, we conclude
  \begin{equation}\label{eq:deltaL2}
    \||x|^{(\delta-1)/2}\partial v\|_{L^{2}(|x|\ge R_{0})}
    +
    \sqrt{\lambda}
    \||x|^{(\delta-1)/2} v\|_{L^{2}(|x|\ge R_{0})}
    \lesssim
    \|f\|_{\dot Y^{*}}.
  \end{equation}

  In the case $\lambda>0$ we have proved that
  $|x|^{(\delta-1)/2}v\in L^{2}$ i.e., $\lambda$ is a resonance,
  and this is enough to conclude that $v=0$ by applying
  one of the available results on the absence of embedded
  eigenvalues. For instance, we can apply
  the results from \cite{KochTataru06-a} which are 
  partiularly sharp.
  Note that in \cite{KochTataru06-a}
  a \emph{scalar} operator is considered, but
  it is easy to check that the same proof covers also the case of 
  an operator which is diagonal in the principal part and
  coupled only in lower order terms.
  We need to check the assumptions on the potentials required
  in \cite{KochTataru06-a}.
  The potential $V$ in \cite{KochTataru06-a} is simply $V=z$ 
  in our case, which we are assuming real and $>0$, 
  thus condition A.1 is trivially satisfied.
  Concerning $W$ we have
  \begin{equation*}
    \|W\|_{L^{3/2}}
    \le
    \||x|^{-2}\|_{\ell^{\infty}L^{3/2}}
    \||x|^{2}W\|_{\ell^{3/2}L^{\infty}}<\infty
  \end{equation*}
  by assumption,
  thus $W\in L^{3/2}$ and condition A.2 in
  \cite{KochTataru06-a} is satisfied.
  Concerning the potential $Z$, we have
  \begin{equation*}
    \|Z\|_{\ell^{\infty}L^{3}}
    \le
    \||x|^{-1}\|_{\ell^{\infty}L^{3}}
    \||x|Z\|_{\ell^{3} L^{\infty}}<\infty
  \end{equation*}
  thus $Z\in \ell^{\infty}L^{3}$; moreover a similar computation
  applied to $\one{|x|>M}Z$ gives
  \begin{equation*}
    \|\one{|x|>M}Z\|_{\ell^{\infty}L^{3}}
    \le
    \||x|^{-1}\|_{\ell^{\infty}L^{3}}
    \|\one{|x|>M}|x|^{-\delta}\|_{L^{\infty}}
    \||x|^{1+\delta}Z\|_{\ell^{3} L^{\infty}}\to0
    \quad\text{as}\quad M\to \infty.
  \end{equation*}
  Thus to check that $Z$ satisfies condition A.3
  in \cite{KochTataru06-a} it remains to check that
  the low frequency part $S_{<R}Z$ of $Z$ satisfies A.2
  for $R$ large enough. $S_{<R}Z$ is obviously smooth.
  Moreover, it is clear that
  $|x|Z\to0$ as $|x|\to \infty$;
  in order to prove the same
  decay property for $S_{<R}Z$ we represent it as
  a convolution with a suitable Schwartz kernel $\phi$
  \begin{equation*}
    \textstyle
    \phi *Z(x)=
    \int_{|y|\le \frac{|x|}{2}}
    Z(y)\phi(x-y)
    +
    \int_{|y|\ge \frac{|x|}{2}}
    Z(y)\phi(x-y).
  \end{equation*}
  The first integral is bounded by $C_{k} \bra{x}^{-k}$ for all $k$.
  For the second one we write
  \begin{equation*}
    \textstyle
    |x|\int_{|y|\ge \frac{|x|}{2}}
    Z(y)\phi(x-y)
    \le
    \int_{|y|\ge \frac{|x|}{2}}
    |y|Z(y)\phi(x-y)=o(|x|).
  \end{equation*}
  We have thus proved that $|x|S_{<R}Z\to0$ as $|x|\to \infty$
  (for any fixed $R$) and hence $Z$ satisfies condition A.3.
  Applying Theorem 8 of \cite{KochTataru06-a}, we conclude
  that $v=0$.

  It remains to consider the case $\lambda=0$.
  We denote by $\dot L^{2,s}$ the Hilbert space with norm
  \begin{equation*}
    \|v\|_{\dot L^{2,s}}:=\||x|^{s}v\|_{L^{2}}.
  \end{equation*}
  By the well known Stein--Weiss estimate for fractional
  integrals in weighted $L^{p}$ spaces, applied to
  $R_{0}(0)v=\Delta^{-1}v=c|x|^{-1}*v$, we see that $R_{0}(0)$
  is a bounded operator
  \begin{equation*}
    \textstyle
    R_{0}(0):\dot L^{2,s}\to \dot L^{2,s-2}
    \qquad\text{for all}\qquad
    \frac 12<s<\frac 32
  \end{equation*}
  while $\partial R_{0}(0)=c (x|x|^{-3})*v$ is a bounded operator
  \begin{equation*}
    \textstyle
    \partial R_{0}(0):\dot L^{2,s}\to \dot L^{2,s-1}
    \qquad\text{for all}\qquad
    -\frac 12<s<\frac 32.
  \end{equation*}
  Recall also that $R_{0}(0)$ is bounded from $\dot Y^{*}$
  to $\dot X$ and $\partial R_{0}(0)$ is bounded from
  $\dot Y^{*}$ to $\dot Y$.
  Moreover from the assumption on $W,Z$ it follows that
  the corresponding multiplication operators are bounded
  operators
  \begin{equation*}
    \textstyle
    W+i (\partial \cdot Z):\dot X\to \dot L^{2,1/2+\delta},
    \qquad
    W+i (\partial \cdot Z):\dot L^{2,s-2}\to \dot L^{2,s+\delta}
    \qquad \forall s\in \mathbb{R},
  \end{equation*}
  \begin{equation*}
    Z:\dot Y\to \dot L^{2,1/2+\delta},
    \qquad
    Z:\dot L^{2,s-1}\to \dot L^{2,s+\delta}
    \qquad \forall s\in \mathbb{R}.
  \end{equation*}
  Conbining all the previous properties we deduce that
  $K(0)=(W+i \partial \cdot Z+i Z \cdot \partial)R_{0}(0)$
  is a bounded operator
  \begin{equation}\label{eq:Kbdd}
    \textstyle
    K(0):\dot Y^{*}\to \dot L^{2,1/2+\delta}
    \quad\text{and}\quad 
    K(0):\dot L^{2,s}\to \dot L^{2,s+\delta},
    \qquad
    \forall\ \frac 12<s<\frac 32.
  \end{equation}
  Since we know that $f\in \dot Y^{*}$ and that
  $f=K(0)f$, applying \eqref{eq:Kbdd} repeatedly,
  we obtain in a finite number of steps that 
  $f\in \dot L^{2,3/2}$, which in turn implies
  $v=R_{0}(0)f\in \dot L^{2,s}$ for all $s<-\frac 12$
  and
  $\partial v=\partial R_{0}(0)f\in \dot L^{2,s}$ for all 
  $s<\frac 32$. The proof is concluded.
\end{proof}

Note that $z \mapsto I-K(z)$ is trivially 
continuous (and actually holomorphic for $z\not\in \sigma(L)$).
Since $K(z)$ is compact and $I-K(z)$ is injective on $\dot Y^{*}$,
it follows from Fredholm theory that $(I-K(z))^{-1}$ is a
bounded operator for all $z\in \mathbb{C}$. However we need
a bound uniform in $z$, and to this end it is sufficient
to prove that the map $z \mapsto(I-K(z))^{-1}$
is continuous. 
This follows from a general well known result
on Fredholm operators (a proof can be found e.g. in
\cite{DAncona17-a}):

\begin{lemma}[]\label{lem:invertImK}
  Let $X_{1},X_{2}$ be two Banach spaces,
  $K_{j},K$ compact operators from $X_{1}$ to $X_{2}$,
  and assume
  $K_{j}\to K$ in the operator norm as $j\to \infty$.
  If $I-K_{j}$, $I-K$ are invertible with bounded inverses,
  then $(I-K_{j})^{-1}\to(I-K)^{-1}$ in the operator norm.
\end{lemma}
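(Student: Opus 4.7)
The plan is to derive the convergence from the resolvent identity
\begin{equation*}
  (I-K_{j})^{-1}-(I-K)^{-1}
  =
  (I-K_{j})^{-1}(K_{j}-K)(I-K)^{-1},
\end{equation*}
valid whenever both inverses exist and (as is implicit in the statement) $X_{1}=X_{2}$ so that $I-K_{j}$ makes sense as an operator on a single space. Since $(I-K)^{-1}$ is bounded by hypothesis and $\|K_{j}-K\|\to0$ by assumption, the only real content is to establish a uniform bound $\sup_{j}\|(I-K_{j})^{-1}\|<\infty$; once this is in hand, the right hand side of the identity tends to 0 in operator norm.

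To prove uniform boundedness of the inverses I would argue by contradiction. Suppose, passing to a subsequence, that $\|(I-K_{j})^{-1}\|\to\infty$. Then there exist unit vectors $x_{j}$ with $\|x_{j}\|=1$ such that $y_{j}:=(I-K_{j})x_{j}\to 0$. Writing
\begin{equation*}
  K_{j}x_{j}=Kx_{j}+(K_{j}-K)x_{j},
\end{equation*}
the second term tends to 0 in norm because $\|K_{j}-K\|\to0$ and $\|x_{j}\|=1$. Since $K$ is compact and $(x_{j})$ is bounded, $Kx_{j}$ has a convergent subsequence, so the same is true of $K_{j}x_{j}$; passing to this subsequence, $K_{j}x_{j}\to z$ for some $z\in X_{2}$. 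But $x_{j}=y_{j}+K_{j}x_{j}\to z$, whence $\|z\|=1$, and by continuity of $K$ we get $Kx_{j}\to Kz$, so that $K_{j}x_{j}\to Kz$ and therefore $z=Kz$. This contradicts the injectivity of $I-K$, guaranteed by the assumption that $(I-K)^{-1}$ exists as a bounded operator.

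With the uniform bound $\sup_{j}\|(I-K_{j})^{-1}\|\le M<\infty$ secured, the resolvent identity gives
\begin{equation*}
  \|(I-K_{j})^{-1}-(I-K)^{-1}\|
  \le
  M\,\|K_{j}-K\|\,\|(I-K)^{-1}\|\to 0,
\end{equation*}
which is the desired conclusion. The only delicate point in the whole argument is the contradiction step: one must combine the norm convergence $K_{j}\to K$ with the compactness of the single operator $K$ (rather than equicontinuity of $\{K_{j}\}$, which is not assumed), and it is precisely the splitting $K_{j}x_{j}=Kx_{j}+(K_{j}-K)x_{j}$ that makes this work cleanly.
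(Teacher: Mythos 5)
The paper gives no proof of this lemma, only a citation to a reference, so there is nothing to compare against directly; your argument is correct and is the standard one. The resolvent identity reduces the problem to the uniform bound $\sup_{j}\|(I-K_{j})^{-1}\|<\infty$, and your contradiction argument — splitting $K_{j}x_{j}=Kx_{j}+(K_{j}-K)x_{j}$, extracting a convergent subsequence via compactness of the single operator $K$, and deducing a nonzero fixed point of $K$ contradicting injectivity of $I-K$ — is exactly right, including the observations that one must implicitly take $X_{1}=X_{2}$ and that compactness of $K$ alone (not equicontinuity of $\{K_{j}\}$) suffices.
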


% \begin{proof}%[of ...]
%   Denote by $\|\cdot\|_{\mathscr{B}}$ the norm of bounded
%   operators $X_{1}\to X_{2}$ (or $X_{2}\to X_{1}$).
%   Let $\phi\in X_{2}$ and let
%   $c_{j}:=\|(I-K_{j})^{-1}\phi\|_{X_{1}}$.
%   If by contradiction $c_{j}\to \infty$, then
%   defining $\psi_{j}=(I-K_{j})^{-1}\phi \cdot c_{j}^{-1}$
%   and $\phi_{j}=\phi \cdot c_{j}^{-1}$ we would have
%   \begin{equation*}
%     \|\psi_{j}\|_{X_{1}}=1,
%     \qquad
%     \|\phi_{j}\|_{X_{2}}\to 0,
%     \qquad
%     \phi_{j}=(I-K_{j})\psi_{j}.
%   \end{equation*}
%   The last identity can be written
%   \begin{equation*}
%     \psi_{j}=\phi_{j}+(K_{j}-K)\psi_{j}+K \psi_{j}.
%   \end{equation*}
%   The first two terms at the right tend to 0, and the third
%   one converges, by possibly passing to a subsequence, since
%   $K$ is compact; let $\psi=\lim K \psi_{j}$. By the previous
%   identity we see that also $\psi_{j}$ converges to $\psi$
%   so that $\|\psi\|=1$ and $\psi=K \psi$, which contradicts 
%   the invertibility of $I-K$. 

%   We have thus proved that, for any $\phi\in X_{2}$, the sequence
%   $\chi_{j}:=(I-K_{j})^{-1}\phi$ is bounded in $X_{1}$.
%   Write this identity
%   \begin{equation*}
%     \chi_{j}=\phi+K \chi_{j}+(K_{j}-K)\chi_{j}
%   \end{equation*}
%   and note as before that $K \chi_{j}$ is a relatively compact
%   sequence; let $\chi$ be any one of its limit points.
%   Letting $j\to \infty$ we get $\chi=\phi+K \chi$, i.e.,
%   $(I-K_{j})^{-1}\phi\to(I-K)^{-1}\phi$.
%   Applying the uniform boundedness principle we get the claim.
% \end{proof}

We finally sum up the previous results. 
We shall need to assume that
$0$ \emph{is not a resonance}, in the following sense:

\begin{definition}[Resonance]\label{def:reson}
  We say that $0$ is a \emph{resonance} for the operator
  $L$ if there exists a nonzero
  $v\in H^{2}_{loc}(\mathbb{R}^{3}\setminus0)\cap \dot X$
  with $\partial v\in \dot Y$,
  solution of $Lv=0$ with the properties
  \begin{equation}\label{eq:resona}
    |x|^{-\frac 12-\sigma}v\in L^{2}
    \quad\text{and}\quad 
    |x|^{\frac 12-\sigma}\partial v\in L^{2}
    \qquad \forall \sigma>0.
  \end{equation}
  The function $v$ is then called a \emph{resonant state}
  at 0 for $L$.
\end{definition}

Note that in 
Lemma \ref{lem:eigv} we proved in particular that if 
$f\in \dot Y^{*}$ satisfies $f=K(0)f$, then $v=R_{0}(0)f$
is a resonant state at 0.

\begin{proposition}[]\label{pro:invert}
  Assume the operator $L$ defined in \eqref{eq:defL}
  is non negative and selfadjoint on $L^{2}$, with
  $W=W^{*}$ and $Z$ satisfying \eqref{eq:WZdelta}
  for some $\delta>0$. In addition, asssume that
  $0$ is not a resonance for $L$, in the sense of
  \eqref{eq:resona}.

  Then $I_{4}-K(z)$ is a bounded invertible operator 
  on $\dot Y^{*}$, with $(I_{4}-K(z))^{-1}$
  bounded uniformly for $z$ in bounded subsets of 
  $\overline{\mathbb{C}^{\pm}}$.
  Moreover, the resolvent operator $R(z)=(L-z)^{-1}$ 
  satisfies the estimate
  \begin{equation}\label{eq:resestim}
    \|R(z)f\|_{\dot X}
    +
    |z|^{\frac12}\|R(z)f\|_{\dot Y}
    +
    \|\partial R(z)f\|_{\dot Y}
    \le
    C(z)
    \|f\|_{\dot Y^{*}}
  \end{equation}
  for all $z\in\overline{\mathbb{C}^{\pm}}$, where $C(z)$
  is a continuous function of $z$.
\end{proposition}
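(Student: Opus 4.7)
The plan is to combine Fredholm theory applied to $I_{4}-K(z)$ (compactness being in hand from Lemma~\ref{lem:compact}) with the injectivity results of Lemmas~\ref{lem:eigvcomplex}--\ref{lem:eigv}, and then to conclude via the Lippmann--Schwinger representation \eqref{eq:LSE} together with the uniform bound \eqref{eq:freeest} for the free resolvent $R_{0}(z)$.

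First I would verify the injectivity of $I_{4}-K(z)$ on $\dot Y^{*}$ for every $z\in\overline{\mathbb{C}^{\pm}}$, splitting into three cases. For $z$ outside the spectrum $\sigma(L)$, Lemma~\ref{lem:eigvcomplex} immediately yields that $(I_{4}-K(z))f=0$ forces $f=0$. For real $z=\lambda>0$, Lemma~\ref{lem:eigv} gives the same conclusion via the Koch--Tataru result on the absence of embedded positive eigenvalues. In the threshold case $\lambda=0$, Lemma~\ref{lem:eigv} produces $v=R_{0}(0)f$ with exactly the regularity and decay properties listed in Definition~\ref{def:reson}, so the standing no-resonance hypothesis forces $v\equiv0$; since $R_{0}(0)$ is convolution with $c|x|^{-1}$ and hence injective on distributions, this gives $f=0$.

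With injectivity in hand, the Fredholm alternative (applicable because $K(z)$ is compact on $\dot Y^{*}$ by Lemma~\ref{lem:compact}) yields that $I_{4}-K(z):\dot Y^{*}\to \dot Y^{*}$ is a bijection with bounded inverse for each $z\in\overline{\mathbb{C}^{\pm}}$. To upgrade this pointwise bound to the claimed local uniformity, I would use the norm-continuity $z\mapsto K(z)$ from Lemma~\ref{lem:compact} together with Lemma~\ref{lem:invertImK}: these combine to show that $z\mapsto(I_{4}-K(z))^{-1}$ is continuous in the operator norm of $\mathcal{B}(\dot Y^{*})$ on the closed half-plane, whence $\|(I_{4}-K(z))^{-1}\|_{\dot Y^{*}\to \dot Y^{*}}$ is dominated by a continuous function $C_{1}(z)$.

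Finally, using the Lippmann--Schwinger identity \eqref{eq:LSE} I would write $R(z)f=R_{0}(z)g$ with $g=(I_{4}-K(z))^{-1}f$, so that $\|g\|_{\dot Y^{*}}\le C_{1}(z)\|f\|_{\dot Y^{*}}$, and then apply the free resolvent estimate \eqref{eq:freeest} to $R_{0}(z)g$; this yields \eqref{eq:resestim} with a continuous $C(z)$ proportional to $C_{1}(z)$. The main obstacle is the threshold case $\lambda=0$, where the whole argument rests on the no-resonance assumption and on the precise regularity/decay bounds for $v=R_{0}(0)f$ coming out of the radiation estimate in Lemma~\ref{lem:eigv}; the positive real line presents no difficulty once Koch--Tataru is invoked, and the complex regime follows from standard selfadjoint spectral theory via Lemma~\ref{lem:eigvcomplex}.
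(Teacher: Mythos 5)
Your proposal is correct and follows essentially the same route as the paper: compactness and norm-continuity of $K(z)$ from Lemma~\ref{lem:compact}, injectivity of $I_4-K(z)$ from Lemmas~\ref{lem:eigvcomplex}--\ref{lem:eigv} together with the no-resonance hypothesis (the paper itself notes just before the proposition that Lemma~\ref{lem:eigv} produces a resonant state at $0$ from any nontrivial kernel element), Fredholm plus Lemma~\ref{lem:invertImK} for the locally uniform bound, and finally the Lippmann--Schwinger identity with the free estimate \eqref{eq:freeest}. You have merely spelled out the three-case injectivity argument that the paper leaves implicit; the observation that $R_0(0)f=0$ forces $f=0$ (equivalently, $f=-\Delta R_0(0)f$) is the correct closing step at the threshold.
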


\begin{proof}%[of ...]
  It is sufficient to combine
  Lemmas \ref{lem:compact},
  \ref{lem:eigvcomplex},
  \ref{lem:eigv},
  \ref{lem:invertImK}
  and apply Fredholm theory in conjuction with assumption
  \eqref{eq:resona}, to prove the claims about
  $I-K(z)$; note that \eqref{eq:WZdelta} include
  the assumptions of Lemmas \ref{lem:compact}--\ref{lem:invertImK}.
  Finally, using the representation \eqref{eq:LSE}
  and the free estimate \eqref{eq:freeest} we 
  obtain \eqref{eq:resestim}.
\end{proof}

\subsection{Proof of Theorem \ref{the:smoooestD}}
\label{sub:pro_of_thesmoo}

Squaring the operator $\mathcal{D}+V$ produces a 
non negative, selfadjoint
operator with domain $H^{2}(\mathbb{R}^{3})$, of the form
\begin{equation}\label{eq:squadirac}
  L:=
  (\mathcal{D}+V)^{2}=
  -I_{4}\Delta
  +
  V^{2}+\mathcal{D}V+V \mathcal{D}.
\end{equation}
We want to apply Propositions \ref{pro:invert} 
and \ref{pro:resestlarge} to the operator $L$.
First of all we check the 0 resonance assumption:

\begin{lemma}[]\label{lem:reson2reson}
  If $0$ is a resonance for the operator 
  $L=(\mathcal{D}+V)^{2}$, in the sense of 
  Definition \ref{def:reson}, then $0$ is a
  resonance for the operator $\mathcal{D}+V$
  in the sense of Definition \ref{def:resonDirac}.
\end{lemma}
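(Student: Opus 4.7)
The strategy is to show that $w := (\mathcal{D}+V)v$ vanishes identically, so that $v$ itself plays the role of the resonant state for $\mathcal{D}+V$. Let $v$ be as in Definition~\ref{def:reson}: $v \in H^{2}_{loc}(\mathbb{R}^{3}\setminus 0)\cap \dot X$, $\partial v \in \dot Y$, $|x|^{-1/2-\sigma}v \in L^{2}$ and $|x|^{1/2-\sigma}\partial v \in L^{2}$ for all small $\sigma>0$, and $Lv = 0$. Then $w \in H^{1}_{loc}(\mathbb{R}^{3}\setminus 0)$ and $(\mathcal{D}+V)w=0$ distributionally on $\mathbb{R}^{3}\setminus 0$. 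Choose a cutoff $\chi \in C^{\infty}_{c}(\mathbb{R}^{3})$ with $\chi=1$ on $|x|\le 1$, $\chi=0$ on $|x|\ge 2$, and set $\phi_{\epsilon,R}(x) = \chi(x/R)(1-\chi(x/\epsilon))$, compactly supported in $\{\epsilon\le|x|\le 2R\}$. Since $\phi_{\epsilon,R}w$ is compactly supported away from $0$, the formal self-adjointness of $\mathcal{D}+V$ together with $(\mathcal{D}+V)w=0$ yield
\begin{equation*}
  \int \phi_{\epsilon,R}|w|^{2}\,dx
  = \int \langle (\mathcal{D}+V)v,\,\phi_{\epsilon,R}w\rangle\,dx
  = -i\int \langle v,\,\alpha\cdot(\partial\phi_{\epsilon,R})\,w\rangle\,dx.
\end{equation*}

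Writing $A_{r}:=\{r<|x|<2r\}$, the right side is controlled by $\epsilon^{-1}\|v\|_{L^{2}(A_{\epsilon})}\|w\|_{L^{2}(A_{\epsilon})} + R^{-1}\|v\|_{L^{2}(A_{R})}\|w\|_{L^{2}(A_{R})}$. At infinity, the weighted hypotheses give $\|v\|_{L^{2}(A_{R})}\lesssim R^{1/2+\sigma}o_{R\to\infty}(1)$ and $\|\partial v\|_{L^{2}(A_{R})}\lesssim R^{-1/2+\sigma}o_{R\to\infty}(1)$ for any small $\sigma>0$, while the pointwise bound $|V|\lesssim|x|^{-1-\delta/2}$ from Condition~(V) majorizes $\|Vv\|_{L^{2}(A_{R})}$ by an even smaller quantity; combined with the $R^{-1}$ prefactor this produces $R^{2\sigma-1}o(1)\to 0$ for $\sigma<1/2$. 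Near the origin, $v\in\dot X$ and $\partial v\in\dot Y$ yield $\|v\|_{L^{2}(A_{\epsilon})}\lesssim \epsilon^{3/2}$ and $\|\partial v\|_{L^{2}(A_{\epsilon})}\lesssim \epsilon^{1/2}$; the dyadic strengthening $|x|^{2}|V|^{2}\in \ell^{1}L^{\infty}$ in Condition~(V) furnishes $|V(x)|\le c_{j}^{1/2}|x|^{-1}$ on each shell $C_{j}=\{2^{j}\le|x|<2^{j+1}\}$ with $\sum_{j}c_{j}<\infty$, and a short computation (using $\int_{|x|=\rho}|v|^{2}dS\lesssim \rho^{2}$) gives $\|Vv\|_{L^{2}(A_{\epsilon})}\lesssim c_{j}^{1/2}\epsilon^{1/2}$ for $\epsilon\sim 2^{j}$. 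The small-$\epsilon$ contribution is therefore bounded by $\epsilon(1+c_{j}^{1/2})\to 0$ as $\epsilon\to 0$.

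Combining the two limits with Fatou's lemma on the left gives $\int|w|^{2}dx = 0$, so $w\equiv 0$ and $(\mathcal{D}+V)v=0$. Since $v \in H^{2}_{loc}(\mathbb{R}^{3}\setminus 0)\subset H^{1}_{loc}(\mathbb{R}^{3}\setminus 0)$ and $|x|^{-1/2-\sigma}v\in L^{2}$ for all small $\sigma>0$, $v$ meets all the conditions of Definition~\ref{def:resonDirac}, which concludes the proof. The main delicate step is the near-origin estimate: the pointwise asymptotic $|V|\lesssim|x|^{-1-\delta/2}$ alone is too weak there, and the argument genuinely relies on the dyadic $\ell^{1}L^{\infty}$ form of Condition~(V) to make $\|Vv\|_{L^{2}(A_{\epsilon})}$ small enough to beat the $\epsilon^{-1}$ produced by $\partial\phi_{\epsilon,R}$ combined with the $\epsilon^{3/2}$ from $v\in\dot X$.
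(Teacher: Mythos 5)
Your proof is correct, and it takes a genuinely different route from the paper. The paper's argument splits into two cases according to whether $w:=(\mathcal{D}+V)v$ vanishes: if $w=0$ then $v$ itself is the resonant state, and if $w\neq 0$ they show (via a generalized Hardy inequality) that $|x|^{1/2-\sigma}\mathcal{D}v$ and $|x|^{1/2-\sigma}Vv$ lie in $L^{2}$, so that $w\in L^{2}$ is an eigenfunction of $\mathcal{D}+V$ at zero, which they read as a resonance. You instead prove the cleaner statement that $w$ must vanish identically: you exploit the fact that $L=(\mathcal{D}+V)^{2}$ is the square of a formally self-adjoint operator and run the quadratic-form identity $\langle Lv,v\rangle=\|(\mathcal{D}+V)v\|^{2}$ in a cutoff form, with the decay conditions of Definition~\ref{def:reson} (namely $v\in\dot X$, $\partial v\in\dot Y$, and the weighted $L^{2}$ bounds near infinity) guaranteeing that the boundary contributions from $\partial\phi_{\epsilon,R}$ vanish in the limit $\epsilon\to0$, $R\to\infty$. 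Your estimate bookkeeping checks out: near infinity one gets $R^{2\sigma-1}\,o(1)$ for $\sigma<1/2$, and near the origin one gets an extra factor of $\epsilon$. Two remarks. First, your approach has a minor advantage in precision: to conclude that $w$ qualifies as a resonant state in the sense of Definition~\ref{def:resonDirac} the paper's $w\neq0$ branch would still need the near-origin weight $|x|^{-1/2-\sigma}w\in L^{2}$, which is not directly supplied by $w\in L^{2}\cap\dot Y$; your route sidesteps this entirely since $v$ already carries the required weight by hypothesis. Second, your final remark slightly overstates what is needed: the near-origin estimate requires only the pointwise bound $|V|\lesssim|x|^{-1}$ on dyadic shells, i.e.\ $\ell^{\infty}L^{\infty}$ control of $|x|^{2}|V|^{2}$, not the full $\ell^{1}$ summability; the latter is of course implied by Condition (V) and is more than enough, but the summability per se plays no role in this particular estimate.
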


\begin{proof}%[of ...]
  Let $v$ be the resonant state for $L$, with the properties
  listed in Definition \ref{def:reson}, and let 
  $w=(\mathcal{D}+V)v$. If $w=0$ then $v$ is a resonant state
  at 0 for $\mathcal{D}+V$ and the proof is concluded,
  thus we can assume $w$ nonzero.
  By the properties of $v$ we have directly
  $w\in H^{1}_{loc}(\mathbb{R}^{3}\setminus0)\cap \dot Y$ and
  $(\mathcal{D}+V)w=0$, so in particular $w\in L^{2}_{loc}$.
  We now prove that 
  $|x|^{\frac12-\sigma}\mathcal{D}v$
  and
  $|x|^{\frac12-\sigma}Vv$ belong to $L^{2}$;
  thus $w\in L^{2}$ which means that 
  0 is an eigenvalue of $\mathcal{D}+V$.
  The first fact is also contained in the definition of
  the resonant state $v$, while the second one is
  an immediate consequence of the property
  $|V|\lesssim|x|^{-1}$ and of the following
  generalized Hardy inequality
  \begin{equation}\label{eq:hardygen}
    \||x|^{\sigma-1/2}w\|_{L^{2}}
    \lesssim\||x|^{\sigma+1/2}\partial w\|_{L^{2}},
    \qquad
    \sigma>-1.
  \end{equation}
  The proof of \eqref{eq:hardygen} is simple:
  for a compactly supported smooth function $\phi$,
  integrate on $\mathbb{R}^{3}$ the identity
  \begin{equation*}
    \textstyle
    \partial \cdot
    \{\widehat{x}|x|^{2\sigma}|\phi|^{2}\}
    =
    (2+2\sigma)|x|^{2\sigma-1}|\phi|^{2}+
    2\Re|x|^{2\sigma}\phi \overline{\phi_{r}},
    \qquad
    \phi_{r}:= \widehat{x}\cdot \partial \phi
  \end{equation*}
  and use Cauchy--Schwartz to obtain
  \begin{equation*}
    \textstyle
    (\sigma+1)\int_{B_{R}}|x|^{2\sigma-1}|\phi|^{2}dx
    \le
    2(\int_{B_{R}}|x|^{2\sigma-1}|\phi|^{2}dx)^{1/2}
    (\int_{B_{R}}|x|^{2\sigma+1}|\phi_{r}|^{2}dx)^{1/2}.
    \qedhere
  \end{equation*}
\end{proof}

We next check that $L$ satisfies assumption \eqref{eq:WZdelta}.
Following \eqref{eq:squadirac} we must choose
\begin{equation*}
  W:=-V^{2},
  \qquad
  Z:=\alpha V.
\end{equation*}
It is easy to checl that conditions
\eqref{eq:WZdelta} are implied by
\begin{equation}\label{eq:secondass}
  |V|^{2}+|\mathcal{D}V|\le \frac{C}{|x|^{2+\delta}},
  \qquad
  |x|^{2}(|V|^{2}+|\mathcal{D}V|)\in \ell^{1}L^{\infty}
\end{equation} 
for some $\delta>0$
(compare with Condition (V)).
Note that the first condition is
effective for large $x$ while the second one restricts the
singularity at 0 of the potential.
Then we are in position to apply Proposition \ref{pro:invert}
and we obtain that the resolvent operator
$R(z)=(L-z)^{-1}$ satisfies the estimate
\begin{equation*}
  \|R(z)f\|_{\dot X}
  +
  |z|^{\frac12}\|R(z)f\|_{\dot Y}
  +
  \|\partial R(z)f\|_{\dot Y}
  \le
  C(z)\|f\|_{\dot Y^{*}}
\end{equation*}
for all $z\in \overline{\mathbb{C}^{\pm}}$,
with a constant $C(z)$ depending continuously on $z$.

Next, in order to apply Proposition \ref{pro:resestlarge},
using the decomposition $V=\alpha \cdot A+A_{0}\beta+V_{0}$,
we may rewrite $L$ in the form
\begin{equation}\label{eq:squaDirac2}
  \textstyle
  L:=
  -
  I_{4}\Delta_{A}
  -
  i\{V_{0},\alpha\}\partial^{A}v
  -
  i\sum_{j<k}  B_{jk}\alpha_{j}\alpha_{k}
  +
  \mathcal{D}(V_{0}+A_{0}\beta)
  +
  (V_{0}+A_{0}\beta)^{2},
\end{equation}
where
\begin{equation*}
  \{V_{0},\alpha\}=
  V_{0}\alpha+\alpha V_{0},
  \qquad
  B_{jk}=\partial_{j}A_{k}-\partial_{k}A_{j},
  \quad
  j,k=1,2,3.
\end{equation*}
By comparing with \eqref{eq:system}, we choose now
\begin{equation*}
  Z:=\{V_{0},\alpha\},
  \qquad
  -W:=
  -
  i\sum_{j<k}  B_{jk}\alpha_{j}\alpha_{k}
  +
  \mathcal{D}(V_{0}+A_{0}\beta)
  +
  (V_{0}+A_{0}\beta)^{2}
\end{equation*}
and we verify that assumption \eqref{eq:asscoeff} is
satisfied as soon as we impose on the coefficients, besides
\eqref{eq:secondass}, the conditions
\begin{equation}\label{eq:firstass}
  |x||B|+|x|^{2}|\mathcal{D}V_{0}| \in \ell^{1}L^{\infty},
  \qquad
  \||x|V_{0}\|_{\ell^{1}L^{\infty}}< \sigma_{0}
\end{equation}
with $\sigma_{0}>0$ as in Proposition \ref{pro:resestlarge}
(compare with Consition (V)).
From \eqref{eq:firstass} it follows directly that
$\||x|Z\|_{\ell^{1}L^{\infty}}<\sigma_{0}$ and
$|x| \widehat{B}\in \ell^{1}L^{\infty}$ as required.
Next we define
\begin{equation*}
  W_{S}:=
  -
  \left[
    \mathcal{D}(V_{0}+A_{0}\beta)+(V_{0}+A_{0}\beta)^{2}
  \right]
  \one{|x|<R}
\end{equation*}
where $\one{|x|<R}$ is the characteristic function of
$\{|x|<R\}$, and we remark that
\begin{equation*}
  |x|^{2}\mathcal{D}(V_{0}+A_{0}\beta)\in \ell^{1}L^{\infty}
\end{equation*}
since both $V$ and $V_{0}$ satisfy a similar assumpion
(and hence also $A,A_{0}$, in view of the linear independence
of Dirac matrices); on the other hand
\begin{equation*}
  |x|V_{0}\in \ell^{1}L^{\infty}\hookrightarrow \ell^{2}L^{\infty}
  \quad\implies\quad
  |x|^{2}V_{0}^{2}\in \ell^{1}L^{\infty}
\end{equation*}
and since $|x|^{2}V^{2}\in \ell^{1}L^{\infty}$ we have also
\begin{equation*}
  |x|^{2}A_{0}^{2}\in \ell^{1}L^{\infty}.
\end{equation*}
All this implies that $|x|^{2}W_{S}\in \ell^{1}L^{\infty}$,
and hence
\begin{equation*}
  \lim_{R\to0}
  \||x|^{2}W_{S}\|_{\ell^{1}L^{\infty}}\to0.
\end{equation*}
Picking $R$ sufficiently small we see that $W_{S}$ satisfies
\eqref{eq:asscoeff}. It remains to check that
$W_{L}:=W-W_{S}$ satisfies $|x|W_{L}\in \ell^{1}L^{\infty}$,
and this follows from assumption \eqref{eq:firstass} on $B$
and from the previous estimates on $V_{0},A_{0}$
(thanks to the cutoff $1-\one{|x|<R}$ vanishing near 0).

Thus all the assumptions of 
Proposition \ref{pro:resestlarge} are satisfied
and we have
\begin{equation*}
  \|R(z)f\|_{\dot X}
  +
  |z|^{1/2}\|R(z)f\|_{\dot X}
  +\|\partial^{A} R(z)f\|_{\dot Y}
  \le
  C\|f\|_{\dot Y^{*}}
\end{equation*}
for all $z$ large enough in the strip $|\Im z|\le1$.
Taking into account Remark \ref{rem:deAtode} in order
to replace $\partial^{A}$ with $\partial$, and the previous
estimate for small $z$, we conclude that  the estimate
\begin{equation}\label{eq:resestschro}
  \|R(z)f\|_{\dot X}
  +  |z|^{1/2}\|R(z)f\|_{\dot X}
  +\|\partial R(z)f\|_{\dot Y}
  \le
  C\|f\|_{\dot Y^{*}}
\end{equation}
holds for all $z$ in the strip $|\Im z|\le1$, with a
constant independent of $z$, provided
\eqref{eq:firstass}, \eqref{eq:secondass} hold
and $A\in \ell^{\infty}L^{3}$.

Since $|V|\lesssim|x|^{-1}$
and $\||x|^{-1}v\|_{\dot Y}\lesssim\|v\|_{\dot X}$, this implies
\begin{equation}\label{eq:estDV}
  \|(\mathcal{D}+V)R(z)f\|_{\dot Y}
  \le
  C\|f\|_{\dot Y^{*}}
\end{equation}
uniformly in the strip $|\Im z|\le1$.
Moreover, for any positive function $\rho\in\ell^{2}L^{\infty}$,
using the inequalities
\begin{equation*}
  \|\rho f\|_{L^{2}}\le
  \|\rho\|_{\ell^{2}L^{\infty}}\|f\|_{\ell^{\infty}L^{2}},
  \qquad
  \|f\|_{\ell^{1} L^{2}}\le
  \|\rho\|_{\ell^{2}L^{\infty}}\|\rho^{-1} f\|_{L^{2}},  
\end{equation*}
we deduce from \eqref{eq:estDV} the estimate
\begin{equation}\label{eq:weighted}
  \|\rho|x|^{-1/2}(\mathcal{D}+V)R(z)
    |x|^{-1/2}\rho f\|_{L^{2}}
    \le C\|\rho\|_{\ell^{2}L^{\infty}}^{2}\|f\|_{L^{2}}.
\end{equation}

We now introduce the spectral projections $P_{+},P_{-}$ 
defined as
\begin{equation*}
  \textstyle
  P_{+}=\int_{0}^{+\infty}dE_{\lambda},
  \qquad
  P_{-}=\int_{-\infty}^{0}dE_{\lambda}
\end{equation*}
where $dE_{\lambda}$ is the spectral measure
of the selfadjoint operator $\mathcal{D}+V$. 
We decompose $L^{2}$ accordingly as
\begin{equation*}
  L^{2}=\mathcal{H}_{+}\oplus \mathcal{H}_{-},
  \qquad
  \mathcal{H}_{\pm}=P_{\pm}L^{2}
\end{equation*}
and we denote with $L_{\pm}=LP_{\pm}$ the parts of $L$
in $\mathcal{H}_{\pm}$. Note that
\begin{equation*}
  (\mathcal{D}+V)(L-z)^{-1}P_{\pm}=
  \pm L_{\pm}^{1/2}(L_{\pm}-z)^{-1}.
\end{equation*}
Then for $f\in \mathcal{H}_{\pm}$
we have from \eqref{eq:weighted}
\begin{equation}\label{eq:estLpm}
  \|\rho|x|^{-1/2}L_{\pm}^{1/4}
    (L_{\pm}-z)^{-1}
    \rho|x|^{-1/2}L_{\pm}^{1/4}
    f\|_{L^{2}}
  \le
  C\|\rho\|_{\ell^{2}L^{\infty}}^{2}\|f\|_{L^{2}}.
\end{equation}
This means that 
the operator $\rho|x|^{-1/2}L_{+}^{1/4}$
(resp. the operator $\rho|x|^{-1/2}L_{-}^{1/4}$)
is \emph{supersmoothing} for the selfadjoint operator
$L_{+}$ on the Hilbert space $\mathcal{H}_{+}$
(resp. for $L_{-}$ on $\mathcal{H}_{-}$)
in the sense of Kato--Yajima
\cite{KatoYajima89-a};
see \cite{DAncona15-a} for a detailed account of the theory.
By the Kato smoothing theory, this implies the following
smoothing estimate for the Schr\"{o}dinger flow $e^{itL_{\pm}}$
\begin{equation*}
  \|\rho|x|^{-1/2}L_{\pm}^{1/4}
  e^{itL_{\pm}}f\|_{L^{2}_{t}L^{2}_{x}}
  \lesssim
  \|\rho\|_{\ell^{2}L^{\infty}}
  \|f\|_{L^{2}},
  \qquad
  f\in \mathcal{H}_{\pm}
\end{equation*}
and an analogous nonhomogeneous estimate
for $\int_{0}^{t}e^{i(t-s)L_{\pm}}F(s)ds$.
However, by Theorem 2.4 in \cite{DAncona15-a},
a smoothing estimate holds also for the wave flows
$e^{itL_{\pm}^{1/2}}$, with a $L_{\pm}^{1/4}$ derivative loss:
\begin{equation*}
  \|\rho|x|^{-1/2}L_{\pm}^{1/4}e^{itL_{\pm}^{1/2}}f
  \|_{L^{2}_{t}L^{2}_{x}}
  \lesssim
  \|\rho\|_{\ell^{2}L^{\infty}}
  \|L_{\pm}^{1/4}f\|_{L^{2}},
  \qquad
  f\in \mathcal{H}_{\pm}
\end{equation*}
(and similarly for the nonhomogeneous flows
$\int_{0}^{t}e^{i(t-t')L^{1/2}_{\pm}}F(t')dt'$)
so that we have proved
\begin{equation*}
  \|\rho|x|^{-1/2} e^{itL_{\pm}^{1/2}}f\|_{L^{2}_{t}L^{2}_{x}}
  \lesssim
  \|\rho\|_{\ell^{2}L^{\infty}}
  \|f\|_{L^{2}},
  \qquad
  f\in \mathcal{H}_{\pm}.
\end{equation*}
Since $L_{\pm}^{1/2}P_{\pm}=\pm(\mathcal{D}+V)P_{\pm}$,
we arrive at
\begin{equation}\label{eq:finalsmoo}
  \|\rho|x|^{-1/2} e^{it(\mathcal{D}+V)}
  P_{\pm}f\|_{L^{2}_{t}L^{2}_{x}}
  \lesssim
  \|\rho\|_{\ell^{2}L^{\infty}}
  \|f\|_{L^{2}},
  \qquad
  f\in \mathcal{H}_{\pm},
\end{equation}
and summing over $\pm$ we obtain \eqref{eq:smooDiracV}.
The same argument gives the nonhomogeneous estimate
\eqref{eq:smooDiracVnh}. 

Finally, let $u=e^{it(\mathcal{D}+V)}f$,
and let $u_{j}=\partial_{j}u$, $f_{j}=\partial_{j}f$,
$V_{j}=\partial_{j}V$; by differentiating the equation
$iu_{t}+(\mathcal{D}+V)u=0$ we have
\begin{equation*}
  i\partial_{t}u_{j}+
  (\mathcal{D}+V)u_{j}=
  -V_{j}u,
  \qquad
  u_{j}(0)=f_{j}
\end{equation*}
so that
\begin{equation*}
  \textstyle
  u_{j}=e^{i(\mathcal{D}+V)}f_{j}
  +i
  \int_{0}^{t}e^{i(t-t')(\mathcal{D}+V)}
  V_{j}udt'
\end{equation*}
and by \eqref{eq:smooDiracV}, \eqref{eq:smooDiracVnh}
\begin{equation*}
  \|\rho|x|^{-1/2}u_{j}\|_{L_t^{2}L_x^{2}}
  \lesssim
  \|\rho\|_{\ell^{2}L^{\infty}}
  \|f_{j}\|_{L^{2}}
  +
  \|\rho\|_{\ell^{2}L^{\infty}}^{2}
  \|\rho^{-1}|x|^{1/2}V_{j}u\|_{L_t^{2}L_x^{2}}.
\end{equation*}
Then we can write
\begin{equation*}
  \|\rho^{-1}|x|^{1/2}V_{j}u\|_{L_t^{2}L_x^{2}}
  \le
  \|\rho^{-2}|x|V_{j}\|_{L^{\infty}}
  \|\rho|x|^{-1/2}u\|_{L_t^{2}L_x^{2}}
  \lesssim
  \|\rho\|_{\ell^{2}L^{\infty}}
  \|\rho^{-2}|x|V_{j}\|_{L^{\infty}}
  \|f\|_{L^{2}}
\end{equation*}
again by \eqref{eq:smooDiracVnh},
and in conclusion
\begin{equation*}
  \|\rho|x|^{-1/2}\partial u\|_{L_t^{2}L_x^{2}}
  \lesssim
  \|\rho\|_{\ell^{2}L^{\infty}}
  (1+\|\rho\|_{\ell^{2}L^{\infty}}^{2}
  \|\rho^{-2}|x|V_{j}\|_{L^{\infty}})
  \|f\|_{H^{1}}
\end{equation*}
and this gives \eqref{eq:smooDiracVD}.

\subsection{Proof of Corollary \ref{cor:smooang}}

The scalar operator 
$\Lambda_{\omega}=(1-\Delta_{\mathbb{S}^{2}})^{1/2}$,
used to define the Sobolev norms on the sphere,
is not convenient when working with the Dirac equation since
it does not commute with $\mathcal{D}$. We shall use
instead the \emph{spin--orbit operator} $K$, defined 
on $L^{2}(\mathbb{R}^{3})^{4}$
as
\begin{equation*}
  K:=
  \beta(2 S \cdot \Omega+1)
\end{equation*}
where $\Omega=(\Omega_{1},\Omega_{2},\Omega_{3})$ 
are the tangential vector fields to $\mathbb{S}^{2}$
\begin{equation*}
  \Omega=x \wedge \partial
\end{equation*}
while $S=(S_{1},S_{2},S_{3})=-\frac i4 \alpha \wedge \alpha$ 
are the constant matrices
\begin{equation*}
  \textstyle
  S_{j}=-\frac i2 \alpha_{k}\alpha_{\ell},
  \qquad
  (j,k,\ell)
  \ \text{a cyclic permutation of}\ (1,2,3).
\end{equation*}

To describe the action of the Dirac operator it is necessary
to recall the \emph{partial wave decomposition} of
$L^{2}(\mathbb{S}^{2})^{4}$. 
See Section 4.6 of \cite{Thaller92-a} for a complete account.
Let
$Y^{m}_{\ell}$, $\ell=0,1,2,\dots$, $m=-\ell,-\ell+1,\dots,\ell$, 
the usual spherical harmonics on $\mathbb{S}^{2}$, which are an
orthonormal basis of $L^{2}(\mathbb{S}^{2})$; then 
an orthonormal basis of $L^{2}(\mathbb{S}^{2})^{4}$
is given by the family of functions
\begin{equation}\label{eq:Phimk}
  \Phi^{\pm}_{m_{j},k_{j}},
  \qquad
  \textstyle
  j=\frac12,\frac32,\frac52,\dots\qquad
  m_{j}=-j,-j+1,\dots,j,\qquad
  k_{j}=\pm(j+\frac12)
\end{equation}
defined as follows: when $k_{j}=j+1/2$ we have
\begin{equation*}
  \textstyle
  \Phi^{+}_{m_{j},k_{j}}=
   \frac{i}{\sqrt{2j+2}}
  \begin{pmatrix}
    \sqrt{j+1-m_{j}}\ \ Y^{m_{j}-1/2}_{k_{j}} \\
    -\sqrt{j+1+m_{j}}\ \ Y^{m_{j}+1/2}_{k_{j}}\\
    0 \\
    0
  \end{pmatrix}
  \quad
  \textstyle
  \Phi^{-}_{m_{j},k_{j}}=
   \frac{1}{\sqrt{2j}}
  \begin{pmatrix}
    0 \\
    0 \\
    \sqrt{j+m_{j}}\ \ Y^{m_{j}-1/2}_{k_{j}-1} \\
    \sqrt{j-m_{j}}\ \ Y^{m_{j}+1/2}_{k_{j}-1}
  \end{pmatrix}
\end{equation*}
while when $k_{j}=-(j+1/2)$ we have
\begin{equation*}
  \textstyle
  \Phi^{+}_{m_{j},k_{j}}=
   \frac{i}{\sqrt{2j}}
  \begin{pmatrix}
    \sqrt{j+m_{j}}\ \ Y^{m_{j}-1/2}_{1-k_{j}} \\
    \sqrt{j-m_{j}}\ \ Y^{m_{j}+1/2}_{1-k_{j}}\\
    0 \\
    0
  \end{pmatrix}
  \quad
  \textstyle
  \Phi^{-}_{m_{j},k_{j}}=
   \frac{1}{\sqrt{2j+2}}
  \begin{pmatrix}
    0 \\
    0 \\
    \sqrt{j+1-m_{j}}\ \ Y^{m_{j}-1/2}_{-k_{j}} \\
    -\sqrt{j+1+m_{j}}\ \ Y^{m_{j}+1/2}_{-k_{j}}
  \end{pmatrix}.
\end{equation*}
For each choice of $j,m_{j},k_{j}$ as in \eqref{eq:Phimk},
the couple $\{\Phi_{m_{j};k_{j}}^{+},\Phi_{m_{j};k_{j}}^{-}\}$
generates a 2D subspace
$H_{m_{j},k_{j}}$ of $L^{2}(\mathbb{S}^{2})^{4}$,
and we have the natural decomposition
\begin{equation*}
  L^{2}(\mathbb{R}^{3})^{4}\simeq
  \bigoplus_{j=\frac12,\frac32,\dots}^{\infty}
  \bigoplus_{m_{j}=-j}^{j}
  \bigoplus_{\genfrac{}{}{0pt}{2}{k_{j}=}{\pm(j+1/2)}}
  L^{2}(0,+\infty;dr)
  \otimes
  H_{m_{j},k_{j}}.
\end{equation*}
The isomorphism is expressed by the explicit expansion
\begin{equation}\label{eq:expans}
  \Psi(x)=\sum 
    \frac1r
    \psi^{+}_{m_{j},k_{j}}(r)
    \Phi^{+}_{m_{j},k_{j}}+
    \frac1r
    \psi^{-}_{m_{j},k_{j}}(r)
    \Phi^{-}_{m_{j},k_{j}}
\end{equation}
with
\begin{equation}\label{eq:L2exp}
  \|\Psi\|^{2}_{L^{2}}=
  \sum\int_{0}^{\infty}[
    |\psi^{+}_{m_{j},k_{j}}|^{2}+|\psi^{-}_{m_{j},k_{j}}|^{2}
  ]dr.
\end{equation}
Notice also that
\begin{equation}\label{eq:L2expS}
  \|\Psi\|^{2}_{L^{2}(\mathbb{S}^{2})}=
    \sum
    \frac{1}{r^{2}}
    |\psi^{+}_{m_{j},k_{j}}|^{2}+
    \frac{1}{r^{2}}|
    \psi^{-}_{m_{j},k_{j}}|^{2}.
\end{equation}
Each summand
$L^{2}(0,+\infty; dr)\otimes H_{m_{j},k_{j}}$ 
is an eigenspace of the
Dirac operator $\mathcal{D}=i^{-1}\sum \alpha_{j}\partial_{j}$
and the action of $\mathcal{D}$ can be written, 
in terms of the expansion \eqref{eq:expans}, as
\begin{equation*}
  \mathcal{D}\Psi=\sum 
    \left(-\frac{d}{dr}\psi^{-}_{m_{j},k_{j}}+
      \frac{k_{j}}{r}\psi^{-}_{m_{j},k_{j}}
    \right)
    \frac{\Phi^{+}_{m_{j},k_{j}}}{r}+
    \left(\frac{d}{dr}\psi^{+}_{m_{j},k_{j}}+
        \frac{k_{j}}{r}\psi^{+}_{m_{j},k_{j}}
    \right)
    \frac{\Phi^{-}_{m_{j},k_{j}}}{r}.
\end{equation*}
Note that the $\Phi^{\pm}_{m_{j}mk_{j}}$ 
are eigenvectors for $\Lambda_{\omega}$
but with different eigenvalues (satisfying $\simeq j$),
while $\mathcal{D}$ swaps them,
hence $\mathcal{D}$ amd $\Lambda_{\omega}$ do not commute.
On the other hand, the spin--orbit operator $K$ satisfies
\begin{equation*}
  K \Phi^{\pm}_{m_{j},k_{j}}
  =
  -k_{j}\Phi^{\pm}_{m_{j},k_{j}}.
\end{equation*}
Since $k_{j}\simeq\pm j$, we have obviously
\begin{equation}\label{eq:equivKL}
  \|Kv\|_{L^{2}(\mathbb{S}^{2})}
  \simeq
  \|\Lambda_{\omega} v\|_{L^{2}(\mathbb{S}^{2})}
\end{equation}
and more generally, if we define $|K|^{s}$ via
\begin{equation}\label{eq:defKs}
  |K|^{s} \Phi^{\pm}_{m_{j},k_{j}}
  =
  k_{j}^{s}\Phi^{\pm}_{m_{j},k_{j}}.
\end{equation}
we have also
\begin{equation*}
  \||K|^{s}v\|_{L^{2}(\mathbb{S}^{2})}
  \simeq
  \|\Lambda_{\omega}^{s} v\|_{L^{2}(\mathbb{S}^{2})}.
\end{equation*}
Thus the differential operator $K$ can replace 
$\Lambda_{\omega}$ 
to measure angular regularity of functions.
Moreover $K$ commutes with the Dirac matrix $\beta$:
\begin{equation*}
  [K,\beta]=0
\end{equation*}
and as a consequence, the commutator $[K,A_{0}\beta]$ is a
bounded operator on $L^{2}(\mathbb{S}^{2})$:
\begin{equation}\label{eq:commbdd}
  \|[K,A_{0}\beta]v\|_{L^{2}(\mathbb{S}^{2})}
  \lesssim
  \|\Omega A_{0}\|_{L^{\infty}(\mathbb{S}^{2})}
  \|v\|_{L^{2}(\mathbb{S}^{2})}.
\end{equation}

We turn now to the proof of Corollary \ref{cor:smooang}.
Assume first $V_{0}=0$ i.e. $V=A_{0}\beta$ only.
Then by applying $K$ to the equation we get
\begin{equation*}
  iu_{t}+(\mathcal{D}+V)u=F
  \quad\implies\quad
  i(Ku)_{t}+(\mathcal{D}+V)(Ku)=
  KF+
  [K,A_{0}\beta]u.
\end{equation*}
By estimates \eqref{eq:smooDiracV}--\eqref{eq:smooDiracVnh} we have
then
\begin{equation*}
  \|\rho|x|^{-1/2}Ku\|_{L_t^{2}L_x^{2}}
  \lesssim
  \|Ku(0)\|_{L^{2}}+
  \|\rho^{-1}|x|^{1/2}([K,A_{0}\beta]u+KF)\|_{L_t^{2}L_x^{2}}
\end{equation*}
and using \eqref{eq:commbdd},
\eqref{eq:assangA0}
and the estimates \eqref{eq:smooDiracV}, \eqref{eq:smooDiracVnh}
already proved, we obtain
\begin{equation*}
  \|\rho|x|^{-1/2}Ku\|_{L_t^{2}L_x^{2}}
  \lesssim
  \|u(0)\|_{L^{2}}+
  \|Ku(0)\|_{L^{2}}+
  \|\rho^{-1}|x|^{1/2}F\|_{L_t^{2}L_x^{2}}+
  \|\rho^{-1}|x|^{1/2}KF\|_{L_t^{2}L_x^{2}}.
\end{equation*}
Using the equivalence \eqref{eq:equivKL} on the
sphere, we obtain 
\eqref{eq:smooDiracVang}, \eqref{eq:smooDiracVangnh}
for $s=1$.
By interpolation with the case $s=0$, we have proved
\eqref{eq:smooDiracVang}, \eqref{eq:smooDiracVangnh}
for all $0\le s\le1$ under the
additional assumption $V_{0}=0$. The same argument gives
the estimate in the range $1\le s\le 2$, if $V_{0}=0$.

Assume now $V_{0}\neq0$. We have
\begin{equation*}
  iu_{t}+(\mathcal{D}+V)u=F
  \quad\implies\quad
  iu_{t}+(\mathcal{D}+A_{0}\beta)u=F-V_{0}u
\end{equation*}
and by the previous part of the proof
\begin{equation*}
  \|\rho|x|^{-1/2}\Lambda^{s}_{\omega} u\|_{L_t^{2}L_x^{2}}
  \lesssim
  \|\Lambda^{s}_{\omega}u(0)\|_{L^{2}}
  +
  \|\rho^{-1}|x|^{1/2}\Lambda^{s}_{\omega}(F-V_{0}u)\|_{L_t^{2}L_x^{2}}.
\end{equation*}
If $s>1$ we can use the product rule
\begin{equation} \label{eq:est:prod}
\| \Lambda^s_{\omega} (fg) \|_{L_{\omega}^2 (\Si^2)} \lesssim \| \Lambda^s_{\omega} f \|_{L_{\omega}^2 (\Si^2)} \| \Lambda^s_{\omega} g \|_{L_{\omega}^2 (\Si^2)}
\end{equation}
(see (4.9) in \cite{CacDAn}).
Then we have
\begin{equation*}
  \|\Lambda^{s}_{\omega}(V_{0}u)\|_{L^{2}(\mathbb{S}^{2})}
  \lesssim
  \|\Lambda^{s}_{\omega}V_{0}\|_{L^{2}(\mathbb{S}^{2})}
  \|\Lambda^{s}_{\omega}u\|_{L^{2}(\mathbb{S}^{2})}
  \lesssim
  \epsilon \rho^{2}|x|^{-1}
  \|\Lambda^{s}_{\omega}u\|_{L^{2}(\mathbb{S}^{2})}
\end{equation*}
where we used assumption \eqref{eq:assang}, and
if $\epsilon$ is sufficiently small the resulting term
can be absorbed at the left hand side, proving
\eqref{eq:smooDiracVang}, \eqref{eq:smooDiracVangnh}
also for nonzero $V_{0}$.

To prove the last estimate \eqref{eq:smooDiracVangd}
it is sufficient to differentiate the equation
(with $F=0$)
\begin{equation*}
  i(\partial _{j}u)_{t}
  +
  (\mathcal{D}+V)(\partial _{j}u)=
  -
  (\partial _{j}V)u
\end{equation*}
and apply \eqref{eq:smooDiracVang}, \eqref{eq:smooDiracVangnh},
using again the product estimate and assumption \eqref{eq:assang}
as above in order to estimate the term
$(\partial _{j}V)u$, and then estimate \eqref{eq:smooDiracVang}
already proved.

\section{Endpoint Strichartz estimates} \label{sec:str_est}

Strichartz estimates for the free Dirac equation on $\mathbb{R}^{3}$
take the form
\begin{equation}\label{eq:strfree}
  \|e^{it \mathcal{D}}f\|
  _{L^{p}_{t}L^{q}}
  \lesssim
  \||D|^{\frac2p}f\|_{L^{2}},
\end{equation}
\begin{equation}\label{eq:strfreenh}
  \textstyle
  \|
  |D|^{-\frac2p} 
  \int_{0}^{t}
  e^{i(t-t')\mathcal{D}}Fdt' \|_{L^{p}_{t}L^{q}}
  \lesssim
  \||D|^{\frac2{\widetilde{p}}}F\|
  _{L^{\widetilde{p}'}_{t}L^{\widetilde{q}'}},
\end{equation}
where $(p,q)$ and $(\widetilde{p},\widetilde{q})$
are unrelated couples of admissible indices, i.e., satisfying
\begin{equation*}
  \frac1p+\frac1q=\frac12,
  \qquad
  2\le q<\infty,
  \qquad
  \infty\ge p>2.
\end{equation*}
The estimates fail at the so--called
\emph{endpoint} $(p,q)=(2,\infty)$,
however the following replacement is true:
\begin{equation*}
  \|e^{it \mathcal{D}}f\|_{L^{2}_{t}L^{\infty}L^{2}}
  \lesssim
  \|f\|_{\dot H^{1}}.
\end{equation*}
Moreover, we have
the mixed Strichartz--smoothing endpoint estimate
\begin{equation*}
  \textstyle
  \|
  \int_{0}^{t}
  e^{i(t-t')\mathcal{D}}Fdt' \|_{L^{2}_{t}L^{\infty}L^{2}}
  \lesssim
  \|\bra{x}^{\frac12+}|D|F\|_{L^{2}_{t}L^{2}}.
\end{equation*}
Both estimates are proved in \cite{CacDAn}.
Actually, by a minor modification in the arguments
of \cite{CacDAn}, we can prove the following:

\begin{proposition}[]\label{pro:strfreesmoonew}
  Let $\rho\in \ell^{2}L^{\infty}$, $\rho>0$,
  $\rho$ radially symmetric.
  For all $s\ge0$, the flow $e^{it \mathcal{D}}$ satisfies
  the estimates
  \begin{equation}\label{eq:strfreeend}
    \|
    \Lambda^{s}_{\omega}
    e^{it \mathcal{D}}f\|_{L^{2}_{t}L^{\infty}L^{2}}
    \lesssim
    \|
    \Lambda^{s}_{\omega}
    f\|_{\dot H^{1}}
  \end{equation}
  and
  \begin{equation}\label{eq:strfreesmoonew}
    \textstyle
    \|
    \Lambda^{s}_{\omega}
    \int_{0}^{t}
    e^{i(t-t')\mathcal{D}}Fdt' \|_{L^{2}_{t}L^{\infty}L^{2}}
    \lesssim
    \|\rho^{-1}|x|^{\frac12}|D|
    \Lambda^{s}_{\omega}
    F\|_{L^{2}_{t}L^{2}}.
  \end{equation}
\end{proposition}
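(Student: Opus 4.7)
The plan is to reduce both \eqref{eq:strfreeend} and \eqref{eq:strfreesmoonew} to the corresponding $s=0$ endpoint estimates already established in \cite{CacDAn} by exploiting the spin--orbit operator $K$ introduced in the previous section, which commutes with the free Dirac flow. This is precisely the ``minor modification'' alluded to in the statement.

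First, I would record the crucial algebraic fact that $[K,\mathcal{D}]=0$. This can be read off the partial wave decomposition \eqref{eq:expans}: each two--dimensional subspace $L^{2}(0,+\infty;dr)\otimes H_{m_{j},k_{j}}$ is invariant under $\mathcal{D}$, and on $H_{m_{j},k_{j}}$ the operator $K$ reduces to multiplication by the scalar $-k_{j}$. Consequently $|K|^{s}$ (defined via \eqref{eq:defKs}) commutes with $e^{it \mathcal{D}}$ for every $s\ge 0$. Moreover, since $|K|^{s}$ acts only in the angular variables while $|D|^{2}=-\Delta$ preserves the angular momentum decomposition, we have $[|K|^{s},|D|]=0$ as well. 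Together with the norm equivalence $\||K|^{s}v\|_{L^{2}_{\omega}(\mathbb{S}^{2})}\simeq\|\Lambda^{s}_{\omega}v\|_{L^{2}_{\omega}(\mathbb{S}^{2})}$ from the previous section, this allows us to replace $\Lambda^{s}_{\omega}$ by $|K|^{s}$ in both sides of the claimed estimates and then push $|K|^{s}$ freely past $e^{it \mathcal{D}}$ and $|D|$.

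For \eqref{eq:strfreeend}, after this reduction we are left with applying the (unweighted) free endpoint Strichartz bound $\|e^{it \mathcal{D}}g\|_{L^{2}_{t}L^{\infty}L^{2}}\lesssim\|g\|_{\dot H^{1}}$ to $g=|K|^{s}f$, and the result follows at once. For \eqref{eq:strfreesmoonew}, the analogous reduction yields the mixed Strichartz--smoothing endpoint of \cite{CacDAn}, but with the weight $\rho^{-1}|x|^{1/2}$ on the source term in place of the original $\bra{x}^{1/2+}$. This weight is the one that appears naturally when one traces through the TT${}^{*}$ argument of \cite{CacDAn}: the proof there combines the homogeneous endpoint with a Kato smoothing estimate, and smoothing estimates are most naturally stated in the Agmon--H\"{o}rmander dual pairing $\dot Y^{*}\leftrightarrow \dot Y$. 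Using the elementary estimates
\begin{equation*}
  \|g\|_{\dot Y^{*}}\simeq\||x|^{1/2}g\|_{\ell^{1}L^{2}}
  \le\|\rho\|_{\ell^{2}L^{\infty}}\|\rho^{-1}|x|^{1/2}g\|_{L^{2}},
\end{equation*}
one immediately converts any bound phrased in $\dot Y^{*}$ into a bound with the weight $\rho^{-1}|x|^{1/2}$, uniformly in $\rho\in\ell^{2}L^{\infty}$. This is the direct analogue of what is done in Section \ref{sub:pro_of_thesmoo} for the perturbed flow.

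The main (and essentially only) obstacle is verifying that the proof in \cite{CacDAn} goes through verbatim with this flexible weight on the right-hand side rather than a fixed polynomial weight. The check is routine: every place where \cite{CacDAn} uses the duality between $\bra{x}^{-1/2-}L^{2}$ and $\bra{x}^{1/2+}L^{2}$ can be replaced by the pair $\rho|x|^{-1/2}L^{2}$ and $\rho^{-1}|x|^{1/2}L^{2}$, and the underlying Kato smoothing estimate (the free case of \eqref{eq:smooDiracV}) holds in exactly this form. Once this is confirmed, interpolation with the trivial $s=0$ bound is unnecessary, because the commutation argument above already delivers the estimate at arbitrary $s\ge 0$ in one shot.
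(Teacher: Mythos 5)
Your proposal takes essentially the same route as the paper: both reduce matters to re-running the spherical-harmonics argument of \cite{CacDAn} after replacing the weight $\bra{x}^{1/2+}$ by the more flexible $\rho^{-1}|x|^{1/2}$. The two differences are minor but worth recording. First, your reduction to $s=0$ via the spin--orbit operator $K$ is a valid shortcut (your commutation claims $[|K|^{s},\mathcal{D}]=[|K|^{s},|D|]=0$ hold, and the pointwise equivalence on $\mathbb{S}^{2}$ transfers the angular norm), but it is an unnecessary detour here: the paper simply cites the $s$-dependent estimate (2.36) of \cite{CacDAn} directly for \eqref{eq:strfreeend}, and the \cite{CacDAn} proof of the inhomogeneous bound already carries $\Lambda_{\omega}^{s}$ through the harmonic expansion, so there is nothing to gain by stripping it off first. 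Second, and more substantively, where you assert that the weight upgrade is ``a routine check'' justified heuristically through $\dot Y^{*}\leftrightarrow\dot Y$ duality, the paper's proof localizes the modification to a single step (the estimate following formula (2.30) in \cite{CacDAn}) and checks the one condition actually used there, namely that the reciprocal weight $w(r)^{-1}=|r|^{-1/2}\rho(|r|)$ lies in $L^{2}(\mathbb{R})$, which follows from $\rho\in\ell^{2}L^{\infty}$. Your $\dot Y^{*}$ heuristic is pointing at the right mechanism, but it does not by itself establish that the \cite{CacDAn} argument is organized so that the source term only enters through a Cauchy--Schwarz step with a free weight; pinning down exactly which inequality is used, as the paper does, is what turns the heuristic into a proof.
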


\begin{proof}%[of ...]
  The first estimate is precisely (2.36) of 
  Corollary 2.4 in \cite{CacDAn}.
  In order to prove \eqref{eq:strfreesmoonew},
  we argue exactly as in the proofs of Theorem 2.3 and
  Corollary 2.4 in \cite{CacDAn}, expanding the flow
  in spherical harmonics. The only modification
  is to replace the estimate after formula (2.30) in that paper
  with the following one:
  \begin{equation*}
    \textstyle
    \int_{0}^{t}|\widehat{G}^{\ell}_{k}|ds
    \le
    \int_{-\infty}^{+\infty}
    w(\lambda+t-s)^{-1}
    w(\lambda+t-s)
    |\widehat{G}^{\ell}_{k}(s,\lambda+t-s)|ds
    \le
    \|w(r)^{-1}\|_{L^{2}}
    Q_{k}^{\ell}(\lambda+t)
  \end{equation*}
  where the weight $w$ is now $w(r)=|r|^{1/2}\rho(|r|)^{-1}$ 
  instead of $w(r)=\bra{r}^{\frac12+}$, and
  \begin{equation*}
    \textstyle
    Q_{k}^{\ell}(\mu)
    :=
    \left(\int_{-\infty}^{+\infty}
    w(\mu-s)^{2}
    |\widehat{G}^{\ell}_{k}(s,\mu-s)|^{2}
    \right)^{\frac12}.
  \end{equation*}
  Since we have
  \begin{equation*}
    \|w(r)^{-1}\|_{L^{2}(\mathbb{R})}
    =
    \||r|^{-\frac12}\rho(|r|)\|_{L^{2}(\mathbb{R})}
    \le
    \|\rho\|_{\ell^{2}L^{\infty}}
    \||r|^{-\frac12}\|_{\ell^{\infty}L^{2}}
    <\infty,
  \end{equation*}
  this implies
  \begin{equation*}
    \textstyle
    \int_{0}^{t}|\widehat{G}^{\ell}_{k}(s,t-s+\lambda)|ds
    \lesssim
    Q^{\ell}_{k}(\lambda+t)
  \end{equation*}
  as in \cite{CacDAn}. The rest of the proof is unchanged.
\end{proof}

With the help of \eqref{eq:strfreeend}, \eqref{eq:strfreesmoonew}
we can deduce from the smoothing estimates of Theorem 
\ref{the:smoooestD} the endpoint Strichartz estimates for the 
perturbed flow:

\begin{theorem}[]\label{the:strichDV}
  Let $\rho\in \ell^{2}L^{\infty}$, radially symmetric,
  with $\rho^{-2}|x|\in A_{2}$. 
  Assume Condition (V) holds with $\sigma$ small enough.
  If in addition we assume
  \begin{equation}\label{eq:assV2}
    \rho^{-2}|x|(|V|+|\partial V|)\in L^{\infty},
  \end{equation}
  then the perturbed flow satisfies
  \begin{equation}\label{eq:strichDV}
    \|
    e^{it(\mathcal{D}+V)}f\|_{L^{2}_{t}L^{\infty}L^{2}}
    \lesssim
    \|
    f\|_{H^{1}}.
  \end{equation}
  On the other hand, if $V$ has the special form
  \begin{equation*}
    V=A_{0}\beta+V_{0}
  \end{equation*}
  and satisfies (besides Condition (V))
  the assumptions \eqref{eq:assang},
  \eqref{eq:assangA0} for some $s>1$,
  then we have
  \begin{equation}\label{eq:strichDVla}
    \|
    \Lambda^{s}_{\omega}
    e^{it(\mathcal{D}+V)}f\|_{L^{2}_{t}L^{\infty}L^{2}}
    +    \|
    \Lambda^{s}_{\omega}
    e^{it(\mathcal{D}+V)}f\|_{L^{\infty}_{t}H^{1}}
    \lesssim
    \|
    \Lambda^{s}_{\omega}
    f\|_{H^{1}}.
  \end{equation}
\end{theorem}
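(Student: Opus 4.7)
The strategy is to combine Duhamel's formula for $u(t):=e^{it(\mathcal{D}+V)}f$ with the free endpoint estimates of Proposition~\ref{pro:strfreesmoonew} and the weighted smoothing bounds already established in Theorem~\ref{the:smoooestD} and Corollary~\ref{cor:smooang}. First I would write
\[
    u(t)=e^{it\mathcal{D}}f + i\int_{0}^{t} e^{i(t-s)\mathcal{D}}\,Vu(s)\,ds
\]
and take the $L^{2}_{t}L^{\infty}L^{2}$ norm: the homogeneous term is bounded by $\|f\|_{\dot H^{1}}\le\|f\|_{H^{1}}$ via \eqref{eq:strfreeend} with $s=0$, while \eqref{eq:strfreesmoonew} reduces the Duhamel integral to $\|\rho^{-1}|x|^{1/2}|D|(Vu)\|_{L^{2}_{t,x}}$. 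The $A_{2}$ hypothesis $\rho^{-2}|x|\in A_{2}$ guarantees that the Riesz transforms are bounded on $L^{2}(\rho^{-2}|x|\,dx)$, which yields the equivalence $\||D|g\|_{L^{2}(\rho^{-2}|x|\,dx)}\sim\|\partial g\|_{L^{2}(\rho^{-2}|x|\,dx)}$; this lets me replace $|D|$ by the ordinary gradient $\partial$. Expanding $\partial(Vu)=(\partial V)u+V\partial u$ by Leibniz and using \eqref{eq:assV2} to pull out the $L^{\infty}$ factor $\|\rho^{-2}|x|(|V|+|\partial V|)\|_{L^{\infty}}<\infty$, the argument reduces to the weighted norms $\|\rho|x|^{-1/2}u\|_{L^{2}_{t,x}}$ and $\|\rho|x|^{-1/2}\partial u\|_{L^{2}_{t,x}}$, which are controlled by $\|f\|_{H^{1}}$ through \eqref{eq:smooDiracV} and \eqref{eq:smooDiracVD}. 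This closes \eqref{eq:strichDV}.

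For the angular estimate \eqref{eq:strichDVla}, the $L^{2}_{t}L^{\infty}L^{2}$ component follows by applying the same Duhamel scheme but with $\Lambda^{s}_{\omega}$ present: both \eqref{eq:strfreeend} and \eqref{eq:strfreesmoonew} already carry $\Lambda^{s}_{\omega}$, and after the Riesz reduction one must bound $\|\rho^{-1}|x|^{1/2}\partial\Lambda^{s}_{\omega}(Vu)\|_{L^{2}_{t,x}}$. By Leibniz and the spherical product estimate \eqref{eq:est:prod}, on each sphere $|x|=r$ one has
\[
    \|\Lambda^{s}_{\omega}(Wu)\|_{L^{2}_{\omega}}\lesssim\|\Lambda^{s}_{\omega}W\|_{L^{2}_{\omega}}\|\Lambda^{s}_{\omega}u\|_{L^{2}_{\omega}}\qquad\text{for}\ W\in\{V_{0},\partial V\},
\]
and the hypotheses \eqref{eq:assang}--\eqref{eq:assangA0} give the required pointwise control of $\|\Lambda^{s}_{\omega}W(r\omega)\|_{L^{2}_{\omega}}$ by the factor $\rho^{2}/|x|$. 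The scalar part $A_{0}\beta$ is handled separately: since $[\beta,\Lambda^{s}_{\omega}]=0$, one has $[\Lambda^{s}_{\omega},A_{0}\beta]=\beta[\Lambda^{s}_{\omega},A_{0}]$, which is controlled by the angular derivatives of $A_{0}$ appearing in \eqref{eq:assangA0}. The resulting weighted norms of $\Lambda^{s}_{\omega}u$ and $\partial\Lambda^{s}_{\omega}u$ are then absorbed by the angular smoothing bounds \eqref{eq:smooDiracVang}--\eqref{eq:smooDiracVangd}, closing this part by $\|\Lambda^{s}_{\omega}f\|_{H^{1}}$.

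The $L^{\infty}_{t}H^{1}$ piece of \eqref{eq:strichDVla} is the principal technical obstacle, since the Duhamel scheme above naturally produces only $L^{2}_{t}$-type norms. To handle it I would exploit the spin--orbit operator $K$ from the previous section: since $[\mathcal{D},|K|^{s}]=0$ and $[\beta,K]=0$, the function $v:=|K|^{s}u$ solves $(i\partial_{t}+\mathcal{D}+V)v=-[|K|^{s},V]u$ with $[|K|^{s},V]=[|K|^{s},A_{0}]\beta+[|K|^{s},V_{0}]$, and the commutators are estimated in the weighted $L^{2}_{t,x}$ norm by using \eqref{eq:assang}--\eqref{eq:assangA0} together with the angular smoothing bounds for $u$. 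I would then upgrade to $L^{\infty}_{t}L^{2}$ via the retarded dual-smoothing estimate
\[
    \Bigl\|\int_{0}^{t} e^{i(t-s)(\mathcal{D}+V)}G(s)\,ds\Bigr\|_{L^{\infty}_{t}L^{2}}\lesssim\|\rho^{-1}|x|^{1/2}G\|_{L^{2}_{t,x}},
\]
which follows by duality from \eqref{eq:smooDiracV} combined with the Christ--Kiselev lemma. Finally, since $\mathcal{D}+V$ is selfadjoint with domain $H^{1}$, its graph norm is equivalent to $\|\cdot\|_{H^{1}}$, so it suffices to control $\|(\mathcal{D}+V)|K|^{s}u(t)\|_{L^{\infty}_{t}L^{2}}$ in addition to $\||K|^{s}u(t)\|_{L^{\infty}_{t}L^{2}}$; writing $(\mathcal{D}+V)|K|^{s}u=|K|^{s}e^{it(\mathcal{D}+V)}(\mathcal{D}+V)f+[V,|K|^{s}]u$ and applying the $L^{\infty}_{t}L^{2}$ bound just obtained to the first term with initial datum $(\mathcal{D}+V)f$, together with the commutator estimate on the second, yields the desired $\|\Lambda^{s}_{\omega}u\|_{L^{\infty}_{t}H^{1}}\lesssim\|\Lambda^{s}_{\omega}f\|_{H^{1}}$. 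The most delicate point is the sharpness of the commutator bounds for $|K|^{s}$ against $V$, in particular producing the weight $\rho^{-1}|x|^{1/2}$ so that the commutator source can be absorbed by the smoothing norms.
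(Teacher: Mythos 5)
Your treatment of the two $L^{2}_{t}L^{\infty}L^{2}$ bounds --- Duhamel with the free flow $e^{it\mathcal{D}}$, the endpoint pair \eqref{eq:strfreeend}--\eqref{eq:strfreesmoonew}, the $A_{2}$ reduction from $|D|$ to $\partial$, Leibniz plus the spherical product rule \eqref{eq:est:prod}, and finally the smoothing bounds of Theorem \ref{the:smoooestD} and Corollary \ref{cor:smooang} --- follows the paper's argument essentially verbatim.

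Where you diverge substantially is the $L^{\infty}_{t}H^{1}$ piece, and there is a gap in your route. You propose to pass to $v=|K|^{s}u$, write a Duhamel formula for the \emph{perturbed} flow $e^{it(\mathcal{D}+V)}$ with commutator source $-[|K|^{s},V]u$, and then control $(\mathcal{D}+V)|K|^{s}u$ via the graph-norm equivalence, which forces a second appearance of $[V,|K|^{s}]$. The problem is that for non-integer $s\in(1,2]$ the fractional commutator $[|K|^{s},A_{0}]$ is genuinely nontrivial: the hypotheses \eqref{eq:assangA0} control only $x\wedge\partial A_{0}$ and $\Delta_{\mathbb{S}}A_{0}$ (first and second integer angular derivatives), and the paper never establishes a fractional Leibniz or Kato--Ponce type bound for $|K|^{s}$ against a multiplication operator. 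The only commutator the paper ever uses is the integer case $[K,A_{0}\beta]$, in Corollary \ref{cor:smooang}, and non-integer $s$ is reached there by interpolation of the whole estimate, not by estimating the fractional commutator directly. Your proposal therefore rests on an unproved commutator lemma; you flag this as ``the most delicate point'' but do not resolve it.

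The paper avoids the commutator entirely by a shorter route which you may want to adopt. It first proves the free-flow smoothing-to-energy estimate
\begin{equation*}
\textstyle
\Bigl\|\int_{0}^{t}e^{i(t-t')\mathcal{D}}F(t')\,dt'\Bigr\|_{L^{\infty}_{t}L^{2}}
\lesssim
\|\rho^{-1}|x|^{1/2}F\|_{L^{2}_{t}L^{2}},
\end{equation*}
by writing the untruncated integral as $e^{it\mathcal{D}}\int_{0}^{\infty}e^{-it'\mathcal{D}}F\,dt'$, invoking $L^{2}$--conservation plus the \emph{dual} of the free smoothing estimate, and then truncating via Christ--Kiselev. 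Since $|K|^{s}$ and $|D|$ both commute with the \emph{free} operator $\mathcal{D}$, the same bound holds with $\Lambda^{s}_{\omega}|D|$ inserted on both sides (replacing $\Lambda^{s}_{\omega}$ by the equivalent $|K|^{s}$). Plugging in $F=Vu$ from the free-flow Duhamel representation \eqref{eq:duha} then reduces the $L^{\infty}_{t}H^{1}$ bound to precisely the weighted $L^{2}_{t,x}$ quantity $\|\rho^{-1}|x|^{1/2}\Lambda^{s}_{\omega}|D|(Vu)\|_{L^{2}_{t}L^{2}}$ that you already controlled via the product rule for the $L^{2}_{t}L^{\infty}L^{2}$ part --- no commutator, no graph-norm detour.
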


\begin{proof}%[of ...]
  By Duhamel's formula we can write
  \begin{equation}\label{eq:duha}
    \textstyle
    e^{it(\mathcal{D}+V)}f
    =
    e^{it \mathcal{D}}f
    -
    i\int_{0}^{t}
    e^{i(t-t')\mathcal{D}}(Vu)dt',
  \end{equation}
  where $u= e^{it (\D +V)} f$.
  By \eqref{eq:strfreeend}, \eqref{eq:strfreesmoonew} we get
  \begin{equation*}
    \|
    \Lambda^{s}_{\omega}
    e^{it(\mathcal{D}+V)}f\|_{L^{2}_{t}L^{\infty}L^{2}}
    \lesssim
    \|
    \Lambda^{s}_{\omega}
    f\|_{\dot H^{1}}
    +
    \|\rho^{-1}|x|^{1/2}
    \Lambda^{s}_{\omega}
    |D|(Vu)\|_{L^{2}_{t}L^{2}}.
  \end{equation*}
  Since $\rho^{-2}|x|\in A_{2}$,
  we can replace $|D|$ by $\partial$ in the last term:
  \begin{equation*}
    \|\rho^{-1}|x|^{1/2}
    \Lambda^{s}_{\omega}
    |D|(Vu)\|_{L^{2}_{t}L^{2}}
    \simeq
    \|\rho^{-1}|x|^{1/2}
    \Lambda^{s}_{\omega}
    ((\partial V)u+V(\partial u))\|
    _{L^{2}_{t}L^{2}}.
  \end{equation*}
  In the case $s=0$, we continue the estimate as follows
  \begin{equation*}
    \lesssim
    \|\rho^{-2}|x|(|V|+|\partial V|)\|_{L^{\infty}}
    \|\rho|x|^{-1/2}(|u|+|\partial u|)\|_{L^{2}_{t}L^{2}}
  \end{equation*}
  and using the smoothing estimates of Theorem \ref{the:smoooestD}
  and assumption \eqref{eq:assV2}, we obtain \eqref{eq:strichDV}.
  If instead $s>1$, we estimate as follows
  \begin{equation*}
    \lesssim
    \|\rho^{-2}|x|
    (| \Lambda^{s}_{\omega} \partial V|
    +
    | \Lambda^{s}_{\omega}V|)\|_{L^{\infty}L^{2}}
    \|\rho|x|^{-1/2}(| \Lambda^{s}_{\omega} u|
    +
    | \Lambda^{s}_{\omega} \partial u|)\|_{L^{2}_{t}L^{2}}
  \end{equation*}
  thanks to the product rule \eqref{eq:est:prod},
  and using the estimates of 
  Corollary \ref{cor:smooang} we obtain the first part of
  \eqref{eq:strichDVla}.

  It remains to prove the second part of \eqref{eq:strichDVla},
  i.e., the energy estimate with angular regularity.
  First of all we note that the untruncated
  estimate for the free flow
  \begin{equation*}
    \textstyle
    \|\int_{0}^{+\infty}
    e^{i(t-t')\mathcal{D}}F(t')dt'\|_{L^{\infty}_{t}L^{2}}
    \lesssim
    \|\rho^{-1}|x|^{1/2}F\|_{L^{2}_{t}L^{2}}
  \end{equation*}
  can be proved by splitting the integral as
  \begin{equation*}
    \textstyle
    e^{it \mathcal{D}}
    \cdot
    \int_{0}^{+\infty}
        e^{-it'\mathcal{D}}F(t')dt'
  \end{equation*}
  and then using the conservation of $L^{2}$ norm for
  $e^{it \mathcal{D}}$ in combination with
  the dual of the smoothing estimate \eqref{eq:smooDiracV}
  in the case $V=0$. Then by a standard application of
  the Christ--Kiselev Lemma the same estimate holds for the
  truncated integral:
  \begin{equation*}
    \textstyle
    \|\int_{0}^{t}
    e^{i(t-t')\mathcal{D}}F(t')dt'\|_{L^{\infty}_{t}L^{2}}
    \lesssim
    \|\rho^{-1}|x|^{1/2}F\|_{L^{2}_{t}L^{2}}.
  \end{equation*}
  The corresponding estimate with angular regularity
  \begin{equation}\label{eq:angu}
    \textstyle
    \|\Lambda^{s}_{\omega}\int_{0}^{+\infty}
    e^{i(t-t')\mathcal{D}}F(t')dt'\|_{L^{\infty}_{t}L^{2}}
    \lesssim
    \|\rho^{-1}|x|^{1/2}\Lambda^{s}_{\omega}F\|_{L^{2}_{t}L^{2}}
  \end{equation}
  does not follow immediately since $\Lambda_{\omega}$ does
  not commute with $\mathcal{D}$; however, to overcome this
  difficulty, it is sufficient to replace $\Lambda_{\omega}^{s}$
  with the operator $|K|^{s}$ defined in \eqref{eq:defKs},
  which commutes with $\mathcal{D}$ and generates equivalent
  Sobolev norms on $\mathbb{S}^{2}$. 
  With the same arguments one proves
  \begin{equation*}
    \textstyle
    \|\Lambda^{s}_{\omega}\int_{0}^{+\infty}
    e^{i(t-t')\mathcal{D}}|D|F(t')dt'\|_{L^{\infty}_{t}L^{2}}
    \lesssim
    \|\rho^{-1}|x|^{1/2}\Lambda^{s}_{\omega}|D|F\|_{L^{2}_{t}L^{2}},
  \end{equation*}
  Thus we see that, using again the representation \eqref{eq:duha},
  the previous computations give also the second part of
  \eqref{eq:strichDVla} and the proof is concluded.
\end{proof}

\section{Global existence for small data}
\label{sec:GWPsmall}

We now prove Theorem \ref{the:GWPsmall}.
The proof is based on a straightforward
fixed point argument in the space
$X$ defined by the norm
\begin{equation}\label{eq:spaceX}
  \|u\|_{X}:=
  \|\Lambda^{s}_{\omega}u\|_{L^{2}_{t}L^{\infty}_{|x|}L^{2}_{\omega}}
  +
  \|\Lambda^{s}_{\omega}u\|_{L^{\infty}_{t}H^{1}_{x}}.
\end{equation}
Notice that estimate \eqref{eq:strichDVla0} can be written simply
\begin{equation}\label{eq:estX}
  \|e^{it(\mathcal{D}+V)}f\|_{X}\lesssim
  \|\Lambda^{s}_{\omega}\|_{H^{1}}.
\end{equation}
Define $u=\Phi(v)$ for $v\in X$ as the solution of the
linear problem
\begin{equation}\label{eq:linearized}
  iu_{t}+\mathcal{D}u+Vu=\langle \beta u,u \rangle \beta u,\qquad
  u(0,x)=u_{0}(x)
\end{equation}
and represent $u$ as
\begin{equation*}
  \textstyle
  u=\Phi(v)=
  e^{it(\mathcal{D}+V)}u_{0}-
    i \int_{0}^{t}e^{i(t-t')(\mathcal{D}+V)}
       \langle \beta u,u \rangle \beta u dt'.
\end{equation*}
Now by the product estimate \eqref{eq:est:prod} and
by \eqref{eq:estX} we have
\begin{equation*}
\begin{split}
  \|u\|_{X}
  &\lesssim
  \textstyle
  \|\Lambda^{s}_{\omega}f\|_{H^{1}}+
  \int_{0}^{\infty}
  \|e^{i(t-t')\mathcal{D}} \langle \beta u,u \rangle \beta u \|_{X}
  dt'
  \\
  & \lesssim
  \textstyle
  \|\Lambda^{s}_{\omega}f\|_{H^{1}}+
  \int_{0}^{\infty}
   \|\Lambda^{s}_{\omega}
    \langle \beta u,u \rangle \beta u\|_{H^{1}}dt'
  \equiv
  \|\Lambda^{s}_{\omega}f\|_{H^{1}}+
  \|\Lambda^{s}_{\omega}
    P(v,\overline{v})\|_{L^{1}H^{1}}.
\end{split}
\end{equation*}
Using again \eqref{eq:est:prod} we have
\begin{equation*}
  \|\Lambda_{\omega}^{s}(v^{3})\|_{L^{2}_{\omega}(\mathbb{S}^{2})}
  \lesssim
  \|\Lambda_{\omega}^{s}v\|^{3}_{L^{2}_{\omega}(\mathbb{S}^{2})}
\end{equation*}
so that
\begin{equation*}
  \|\Lambda_{\omega}^{s}(v^{3})\|_{L^{2}_{x}}
  \lesssim
  \|\Lambda_{\omega}^{s}v\|_{L^{2}_{x}}
  \|\Lambda_{\omega}^{s}v\|^{2}_{L^{\infty}_{|x|} L^{2}_{\omega}}
\end{equation*}
and
\begin{equation}\label{eq:prod1}
  \|\Lambda_{\omega}^{s}(v^{3})\|_{L^{1}_{t}L^{2}_{x}}
  \lesssim
  \|\Lambda_{\omega}^{s}v\|_{L^{\infty}_{t} L^{2}_{x}}
  \|\Lambda_{\omega}^{s}v\|^{2}
      _{L^{2}_{t}L^{\infty}_{|x|} L^{2}_{\omega}}
  \le \|v\|_{X}^{3}.
\end{equation}
In a similar way,
\begin{equation*}
  \|\Lambda_{\omega}^{s}\nabla(v^{3})\|
        _{L^{2}_{\omega}(\mathbb{S}^{2})}
  \lesssim
  \|\Lambda_{\omega}^{s}\nabla v\|
      _{L^{2}_{\omega}(\mathbb{S}^{2})}
  \|\Lambda_{\omega}^{s}v\|^{2}_{L^{2}_{\omega}(\mathbb{S}^{2})}
\end{equation*}
so that
\begin{equation*}
  \|\Lambda_{\omega}^{s}\nabla(v^{3})\|
        _{L^{2}_{x}}
  \lesssim
  \|\Lambda_{\omega}^{s}\nabla v\|
      _{L^{2}_{x}}
  \|\Lambda_{\omega}^{s}v\|^{2}
      _{L^{\infty}_{|x|} L^{2}_{\omega}}
\end{equation*}
and
\begin{equation}\label{eq:prod2}
  \|\Lambda_{\omega}^{s}\nabla(v^{3})\|_{L^{1}_{t}L^{2}_{x}}
  \lesssim
  \|\Lambda_{\omega}^{s}\nabla v\|_{L^{\infty}_{t} L^{2}_{x}}
  \|\Lambda_{\omega}^{s}v\|^{2}
      _{L^{2}_{t}L^{\infty}_{|x|} L^{2}_{\omega}}
  \le \|v\|_{X}^{3}.
\end{equation}
In conclusion, \eqref{eq:prod1} and \eqref{eq:prod2} imply
\begin{equation*}
   \|\Lambda^{s}_{\omega}
      P(v,\overline{v})\|_{L^{1}H^{1}}\lesssim
  \|v\|^{3}_{X}
\end{equation*}
and the estimate for $u=\Phi(v)$ is
\begin{equation*}
  \|u\|_{X}\equiv\|\Phi(v)\|_{X}\lesssim
  \|\Lambda^{s}_{\omega} f\|_{H^{1}}+\|v\|_{X}^{3}.
\end{equation*}
An analogous computation gives the estimate
\begin{equation*}
  \|\Phi(v)-\Phi(w)\|_{X}\lesssim
  \|v-w\|_{X}\cdot(\|v\|_{X}+\|w\|_{X})^{2}
\end{equation*}
and an application of the contraction mapping theorem
gives the existence and uniqueness of a global solution.
The proof of scattering is completely standard and is
omitted.

\section{Conserved quantities}
\label{sec:conserved}

We observe the conserved quantities for \eqref{eq:Dirac} (see \cite{ChaGla, OzaYam}).

\begin{lemma}\label{lem:conserved}
Let $u$ be a solution to \eqref{eq:Dirac}.
Then,
\[
\int_{\R^3} |u(t,x)|^2 dx, \quad
\int_{\R^3} (\gamma u(t,x), \overline{u(t,x)}) dx
\]
are independent of $t$.
\end{lemma}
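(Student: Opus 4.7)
My plan is to differentiate each quantity in time, substitute $\partial_t u = i(\mathcal{D}+V)u - i\langle\beta u,u\rangle\beta u$, and reduce every resulting integral to zero by combining self-adjointness of $\mathcal{D}+V$ (from Condition (V)), integration by parts, and algebraic identities for $\gamma,\beta,\alpha_j$. Throughout I will use the hypothesis from the ambient context of Theorem~\ref{the:GWPlarge}, namely $V_0(x)\in\mathcal{V}$, so that $V(x)=A_0(x)\beta+V_0(x)\in\mathcal{V}$ pointwise (this is needed only for the second quantity).

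\emph{First quantity.} From $\partial_t\|u\|_{L^2}^2=2\Re\int\langle\partial_t u,u\rangle\,dx$, the contribution of $i(\mathcal{D}+V)u$ is $2\Re\,i\int\langle(\mathcal{D}+V)u,u\rangle\,dx$, which vanishes because $\mathcal{D}+V$ is self-adjoint, making the integral real. The nonlinear contribution is $-2\Re\,i\int\langle\beta u,u\rangle^{2}\,dx$; since $\beta^{*}=\beta$, the scalar $\langle\beta u,u\rangle$ is real pointwise and the integrand is real, so again $i\cdot\mathbb{R}$ has vanishing real part.

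\emph{Second quantity.} I identify $(\gamma u,\overline{u})=u^{T}\gamma u$ by expanding the Hermitian pairing and using $\overline{\overline{u_i}}=u_i$. Since $\gamma^{T}=\gamma$, symmetry gives
\[
\frac{d}{dt}\int u^{T}\gamma u\,dx
=2i\!\int u^{T}\gamma(\mathcal{D}+V)u\,dx
-2i\!\int\langle\beta u,u\rangle\,u^{T}\gamma\beta u\,dx.
\]
I kill each piece in turn. (a) Transposing the scalar $u^{T}\gamma\beta u$ and using $\gamma\beta=-\beta\gamma$ (which is the special case $M=\beta\in\mathcal{V}$ of \eqref{eq:equivdefV}),
\[
u^{T}\gamma\beta u=u^{T}\beta^{T}\gamma^{T}u=u^{T}\beta\gamma u=-u^{T}\gamma\beta u,
\]
so $u^{T}\gamma\beta u\equiv0$. (b) Since $V\in\mathcal{V}$ pointwise, $V=V^{*}$ and $\overline{V}\gamma=-\gamma V$; the same transpose trick gives $u^{T}\gamma V u=u^{T}V^{T}\gamma u=u^{T}\overline{V}\gamma u=-u^{T}\gamma V u$, so $u^{T}\gamma V u\equiv0$. (c) For the free piece $-i\sum_{j}\!\int u^{T}(\gamma\alpha_j)\partial_j u\,dx$, integration by parts gives $\int u^{T}M\,\partial_j u\,dx=-\int u^{T}M^{T}\partial_j u\,dx$ for any constant matrix $M$; a short explicit check (or equivalently, combining $\gamma\alpha_{1,3}=\alpha_{1,3}\gamma$ and $\gamma\alpha_2=-\alpha_2\gamma$ with $\alpha_{1,3}^{T}=\alpha_{1,3}$, $\alpha_{2}^{T}=-\alpha_2$) shows $(\gamma\alpha_j)^{T}=\gamma\alpha_j$ for every $j$, so this integral vanishes as well. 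Adding the three zeros gives the claim.

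The only nontrivial ingredient is the bundle of commutation/transposition identities relating $\gamma$ to $\alpha_j$, $\beta$, and matrices of $\mathcal{V}$; all of them either follow from a short direct computation with the explicit $4\times 4$ matrices or, more conceptually, from the definition \eqref{eq:equivdefV} of $\mathcal{V}$, which was introduced precisely to enforce these cancellations. Rigour of the formal integrations by parts is secured by the standard density argument: approximate $u_{0}$ by Schwartz data so that the corresponding solutions, which exist in $C_t H^{1}$ by Theorem~\ref{the:GWPlarge}, are smooth and decaying enough for all boundary terms to vanish, then pass to the limit.
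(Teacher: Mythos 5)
Your proof is correct and essentially equivalent to the paper's. For the first quantity, you and the paper compute $\frac{d}{dt}\int|u|^2\,dx$ the same way, killing each contribution by self-adjointness and reality. For the second quantity, the paper works directly with the sesquilinear pairing $(\gamma u,\overline{u})$ and cancels the two Duhamel contributions pairwise using the identities $\beta\gamma=-\gamma\beta$, $\overline{\alpha_j}\gamma=\gamma\alpha_j$, and $\overline{V}\gamma=-\gamma V$, reducing to a total derivative $\sum_j\partial_j(\gamma\alpha_j u,\overline{u})$. You instead convert to the bilinear form $u^T\gamma u$, use $\gamma^T=\gamma$ to halve the bookkeeping, and then kill each term by a scalar-transposition trick; but the transposition identities $(\gamma\beta)^T=-\gamma\beta$, $V^T\gamma=-\gamma V$ (given $V=V^*$), and $(\gamma\alpha_j)^T=\gamma\alpha_j$ are precisely the conjugation identities the paper invokes, rewritten via $V^T=\overline{V}$ and $\alpha_j^T=\overline{\alpha_j}$. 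So the algebraic content is the same; the bilinear reformulation is a tidy organizational device rather than a different route. Your closing remarks (that $V\in\mathcal{V}$ is needed only for the second quantity, and the density/regularity justification of the integration by parts) are sensible additions the paper leaves implicit.
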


\begin{proof}
Since $V(x)$ is hermitian, we have
\begin{align*}
& \frac{d}{dt} \int_{\R^3} |u(t,x)|^2 dx \\
& = \int_{\R^3} \left\{ i (\D u(t,x) +V(x) u(t,x) - (\beta u(t,x), u(t,x)) \beta u(t,x),u(t,x)) \right. \\
& \hspace*{50pt} \left. - i (u(t,x), \D u(t,x) + V(x) u(t,x) - (\beta u(t,x),u(t,x)) \beta u(t,x)) \right\} dx \\
& = \sum _{j=1}^3 \int_{\R^3} \partial_j (\alpha_j u(t,x),u(t,x)) dx =0.
\end{align*}
From
\[
\beta \gamma  =-\gamma  \beta, \quad
\overline{\alpha_j} \gamma  = \gamma  \alpha_j, \quad
\overline{V(x)} \gamma  = - \gamma  V(x),
\]
we have
\begin{align*}
& \frac{d}{dt} \int_{\R^3} (\gamma u(t,x), \overline{u(t,x)}) dx \\
& = \int_{\R^3} \left\{ i (\gamma  \D u(t,x) + \gamma V(x) u(t,x) - (\beta  u(t,x), u(t,x)) \gamma \beta u(t,x), \overline{u(t,x)}) \right. \\
& \hspace*{50pt} \left. +i(\gamma u(t,x), \overline{\D u(t,x)} + \overline{V(x) u(t,x)} - (\beta u(t,x),u(t,x)) \beta \overline{u(t,x)}) \right\} dx \\
& = \sum _{j=1}^3 \int_{\R^3} \partial_j (\gamma  \alpha_j u(t,x), \overline{u(t,x)}) dx =0.
\end{align*}
\end{proof}

From these conserved quantities, $(\beta u, u)=0$ for any $t \in \R$ provided that $\gamma  u_0 = \overline{u_0}$;
this is called the Lochak--Majorana condition \cite{Loc, Bac}.

\begin{corollary} \label{cor:vanish}
Let $u$ be a solution to \eqref{eq:Dirac} with $\gamma u_0 = \overline{u_0}$.
Then, $(\beta u, u)=0$ for any $t \in \R$.
\end{corollary}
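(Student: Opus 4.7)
The strategy is to combine the two conservation laws of Lemma \ref{lem:conserved} with a pointwise Cauchy--Schwarz argument, avoiding the need to propagate the pointwise identity $\gamma u = \overline{u}$ directly through the evolution. First I would unpack $(\gamma u,\overline{u})$ componentwise: since $\gamma u = (u_4,-u_3,-u_2,u_1)^{\top}$ and the inner product is sesquilinear,
\[
  (\gamma u,\overline{u}) = \sum_{j=1}^{4}(\gamma u)_{j}u_{j} = 2(u_{1}u_{4} - u_{2}u_{3}),
\]
a complex-valued integrand. The LM hypothesis $\gamma u_{0}=\overline{u_{0}}$ is $u_{0,4}=\overline{u_{0,1}}$ and $u_{0,3}=-\overline{u_{0,2}}$, so $2(u_{0,1}u_{0,4}-u_{0,2}u_{0,3})=2(|u_{0,1}|^{2}+|u_{0,2}|^{2})$, while at the same time $|u_{0}|^{2}=2(|u_{0,1}|^{2}+|u_{0,2}|^{2})$ because LM forces $|u_{0,3}|=|u_{0,2}|$ and $|u_{0,4}|=|u_{0,1}|$. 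Hence the two integrands coincide at $t=0$, and Lemma \ref{lem:conserved} promotes this to
\[
  \int_{\R^{3}} 2(u_{1}u_{4} - u_{2}u_{3})\,dx = \int_{\R^{3}}|u|^{2}\,dx \qquad \text{for every } t\in\R.
\]

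Next I would run the chain of pointwise inequalities
\[
  |2(u_{1}u_{4} - u_{2}u_{3})| \le 2|u_{1}||u_{4}| + 2|u_{2}||u_{3}| \le |u_{1}|^{2}+|u_{2}|^{2}+|u_{3}|^{2}+|u_{4}|^{2} = |u|^{2},
\]
where the first step is the triangle inequality and the second is AM--GM. Integrating gives $|\int 2(u_{1}u_{4} - u_{2}u_{3})\,dx|\le\int|u|^{2}\,dx$; but the conservation identity above says that this chain is fully saturated for every $t$, so both inequalities must be equalities pointwise a.e. The AM--GM step then forces $|u_{1}(t,x)|=|u_{4}(t,x)|$ and $|u_{2}(t,x)|=|u_{3}(t,x)|$, which immediately yields
\[
  (\beta u,u) = |u_{1}|^{2}+|u_{2}|^{2}-|u_{3}|^{2}-|u_{4}|^{2} = 0
\]
pointwise a.e., as required.

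The argument is essentially algebraic, so I do not expect any serious analytic obstacle; the only point needing a little care is the saturation step. Since the complex integrand $2(u_{1}u_{4} - u_{2}u_{3})$ integrates to the nonnegative real number $\int|u|^{2}\,dx$, one must first note that this already forces the integrand to have a.e.\ constant (zero) argument, saturating the triangle inequality, and only then compare moduli to saturate AM--GM pointwise; it is the second saturation that produces the relations $|u_{1}|=|u_{4}|$, $|u_{2}|=|u_{3}|$ needed to conclude. Note that the argument uses only the two conservation laws and does not require $\gamma u(t,\cdot)=\overline{u(t,\cdot)}$ to persist (it does, but is not needed here).
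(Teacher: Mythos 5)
Your proof is correct, and it takes a genuinely different route from the paper's. The paper argues in two steps: it first deduces that the pointwise identity $\gamma u = \overline{u}$ \emph{persists} for all $t$ (by observing that $\int|\gamma u - \overline{u}|^{2}dx$ is conserved and vanishes at $t=0$), and then exploits the anticommutation $\beta\gamma = -\gamma\beta$ to show $\langle\beta u,u\rangle = -\overline{\langle\beta u,u\rangle}$; since $\langle\beta u,u\rangle$ is real (as $\beta$ is hermitian), it vanishes. You instead compare the two conserved integrals directly, note that the LM condition makes them equal at $t=0$, and then extract pointwise information from a rigidity/saturation argument: the pointwise bound $|2(u_{1}u_{4}-u_{2}u_{3})|\le|u|^{2}$ must be an equality a.e., and equality in AM--GM gives $|u_{1}|=|u_{4}|$, $|u_{2}|=|u_{3}|$, which already implies $(\beta u,u)=0$. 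What the paper's route buys is the stronger and conceptually cleaner conclusion that the Lochak--Majorana relation $\gamma u=\overline{u}$ itself is preserved by the perturbed flow; your route is more hands-on and only extracts the modulus equalities, which is all the corollary needs. (In fact, if you also track the saturation of the triangle inequality — $u_{1}u_{4}$ real nonnegative and $u_{2}u_{3}$ real nonpositive — you recover $u_{4}=\overline{u_{1}}$, $u_{3}=-\overline{u_{2}}$ wherever $u\ne0$, so the two approaches really land in the same place.) One small expository caveat: the phrase ``integrates to a nonnegative real number, forcing zero argument'' is not by itself sufficient — a complex function can have a nonnegative real integral without being nonnegative a.e. — but you do have the additional ingredient that $\int|u|^{2}$ also dominates $\int|2(u_{1}u_{4}-u_{2}u_{3})|$, and the full chain $\int|u|^{2}=\Re\int 2(u_{1}u_{4}-u_{2}u_{3})\le\int|2(u_{1}u_{4}-u_{2}u_{3})|\le\int|u|^{2}$ is what pins down the argument a.e.; you should make that chain explicit rather than gesturing at it.
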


\begin{proof}
From $| \gamma  u - \overline{u}|^2= 2 |u|^2 +2 \Re (\gamma u, \overline{u})$,
\[
\int_{\R^3} | \gamma  u(t,x) - \overline{u(t,x)}|^2 dx
\]
is also a conserved quantity.
By the assumption, $\gamma u= \overline{u}$ for any $t \in \R$.
Then,
\[
\langle \beta u,u \rangle 
= (\beta \gamma  \overline{u},u)
= -(\gamma  \beta \overline{u}, u)
= -(\beta \overline{u}, \gamma u)
= - \overline{\langle \beta u,u \rangle }.
\]
Since $\langle \beta u,u \rangle $ is real valued, we obtain $(\beta u, u)=0$.
\end{proof}

\section{Global existence for large data}
\label{sec:GWPlarge}

We now prove Theorem \ref{the:GWPlarge}.
Denote by $\chi_{0}=Pu_{0}$ the projection
of the initial data on the subspace $E$ 
(see \eqref{eq:defE}--\eqref{eq:defP}), and let
$\chi$ be a solution to
\begin{equation*}
i \partial _t \chi + \D \chi + V(x) \chi = ( \beta \chi, \chi) \beta \chi, \quad
\chi (0,x) = \chi_0(x).
\end{equation*}
From $A \chi_0 = \overline{\chi_0}$ and Corollary \ref{cor:vanish}, the nonlinear term vanishes.
In particular, $\chi$ is a solution to the linear problem
\[
i \partial _t \chi + \D \chi + V(x) \chi =0, \quad
\chi (0,x) = \chi_0(x)
\]
that is to say, $\chi=e^{it(\mathcal{D}+V)}\chi_{0}$.

Setting $v= u-\chi$, where $u$ is the solution to be
constructed,
we consider the following Cauchy problem:
\begin{equation} \label{eq:Dirac-v}
\begin{aligned}
& i \partial _t v + \D v + V(x) v = F(v, \chi), \\
& v (0,x) = v_0(x) := u_0(x)-\chi_0(x),
\end{aligned}
\end{equation}
where
\begin{align*}
F(v, \chi)
& := (\beta u, u) \beta u - (\beta \chi, \chi) \beta \chi \\
& = (\beta v, v) \beta v + (\beta \chi, v)v + (\beta v, \chi) v + (\beta v,v) \chi \\
& \quad + (\beta \chi, \chi) v + (\beta \chi, v) \chi + (\beta v, \chi) \chi.
\end{align*}
Let
\[
\| u \|_{X_I} := \| \Lambda^s_{\omega} u \|_{L^{\infty}_I H^1} + \| \Lambda^s_{\omega} u \|_{L^2_I L_{|x|}^{\infty} L_{\omega}^2}
\]
for an interval $I \subset \R$.
We define
\[
\Phi (v) (t) := e^{it(\D + V)} v_0 - i \int_0^t e^{i (t-t') (\D + V)} F(v(t'),\chi (t')) dt'.
\]
Since $\chi_0$ is not small, we shall divide the time interval into a finite number of subintervals such that the norm of $\chi$ is sufficiently small on each.

Let $C_0$ and $C_1$ be the absolute constants appearing in the estimates below.
From Theorem \ref{the:smooDV}, estimate \eqref{eq:strichDVla0},
there exists $T^{\ast}>0$ such that
\[
\| \Lambda^s_{\omega} \chi \|_{L_{[T^{\ast}, \infty)}^2L_{|x|}^{\infty} L_{\omega}^2} < \frac{1}{10(C_1 \| \Lambda^s_{\omega} \chi_0 \|_{H^1} +1)}.
\]
In addition, we can take $T_{\ast}>0$ satisfying
\[
\sup_{0 \le T \le T^{\ast}} \| \Lambda^s_{\omega} \chi \|_{L_{[T,T+T_{\ast}]}^2L_{|x|}^{\infty} L_{\omega}^2} < \frac{1}{10(C_1 \| \Lambda^s_{\omega} \chi_0 \|_{H^1} +1)}.
\]
Let $k$ be a minimum natural number satisfying $k T_{\ast} > T^{\ast}$.
We take sufficiently small $\eps >0$ with
\begin{equation} \label{eq:eps}
4 (2C_0)^{2(k+1)} C_1 \eps^2 + 2 (2C_0)^{k+1} C_1 (\| \Lambda^s_{\omega} \chi_0 \|_{H^1}+1) \eps < \frac{1}{10}.
\end{equation}

We assume that $\| \Lambda^s_{\omega} v_0 \|_{H^1} \le \eps$.
Again \eqref{eq:strichDVla0} yields
\[
\| \Phi (v) \|_{X_{[0,T]}}
\lesssim \| \Lambda^s_{\omega} v_0 \|_{H^1} + \| \Lambda^s_{\omega} F(v, \chi) \|_{L_{[0,T]}^{1} H^1}.
\]
For simplicity, we denote a cubic part with respect to $f$, $g$ and $h$ by $fgh$, e.g., $v^2 \chi$ means $(\beta \chi, v) \beta v$ or $(\beta v, \chi) \beta v$ or $(\beta v,v) \beta \chi$.
By \eqref{eq:est:prod}, we have
\[
\| \Lambda^s_{\omega} (v^3) \|_{L_{\omega}^2(\Si)} \lesssim \| \Lambda^s_{\omega} v \|_{L_{\omega}^2(\Si)}^3,
\]
\[
\| \Lambda^s_{\omega} (v^3) \|_{L_{[0,T]}^1L_x^2}
\lesssim \left\| \| \Lambda^s_{\omega} v \|_{L_x^2} \| \Lambda^s_{\omega} v\|_{L_{|x|}^{\infty} L_{\omega}^2}^2 \right\|_{L_{[0,T]}^1}
\lesssim \| \Lambda^s_{\omega} v \|_{L_{[0,T]}^{\infty} L_x^2} \| \Lambda^s_{\omega} v \|_{L_{[0,T]}^2L_{|x|}^{\infty} L_{\omega}^2}^2.
\]
Similarly, we have
\begin{align*}
\| \Lambda^s_{\omega} \nabla (v^3) \|_{L_{[0,T]}^1L_x^2}
& \lesssim \left\| \| \Lambda^s_{\omega} \nabla v \|_{L_x^2} \| \Lambda^s_{\omega} v\|_{L_{|x|}^{\infty} L_{\omega}^2}^2 \right\|_{L_{[0,T]}^1} \\
& \lesssim \| \Lambda^s_{\omega} \nabla v \|_{L_{[0,T]}^{\infty} L_x^2} \| \Lambda^s_{\omega} v \|_{L_{[0,T]}^2L_{|x|}^{\infty} L_{\omega}^2}^2.
\end{align*}
The calculation used above gives
\begin{align*}
& \| \Lambda^s_{\omega} (v^2 \chi) \|_{L_{[0,T]}^1L_x^2}
\lesssim \| \Lambda^s_{\omega} v \|_{L_{[0,T]}^{\infty} L_x^2} \| \Lambda^s_{\omega} v \|_{L_{[0,T]}^2L_{|x|}^{\infty} L_{\omega}^2} \| \Lambda^s_{\omega} \chi \|_{L_{[0,T]}^2L_{|x|}^{\infty} L_{\omega}^2},
\\
& \| \Lambda^s_{\omega} \nabla (v^2 \chi) \|_{L_{[0,T]}^1L_x^2}
\lesssim \| \Lambda^s_{\omega} \nabla \chi \|_{L_{[0,T]}^{\infty} L_x^2} \| \Lambda^s_{\omega} v \|_{L_{[0,T]}^2L_{|x|}^{\infty} L_{\omega}^2}^2 \\
& \hspace*{100pt} + \| \Lambda^s_{\omega} \nabla v \|_{L_{[0,T]}^{\infty} L_x^2} \| \Lambda^s_{\omega} \chi \|_{L_{[0,T]}^2L_{|x|}^{\infty} L_{\omega}^2} \| \Lambda^s_{\omega} v \|_{L_{[0,T]}^2L_{|x|}^{\infty} L_{\omega}^2},
\\
& \| \Lambda^s_{\omega} (v \chi^2) \|_{L_{[0,T]}^1L_x^2}
\lesssim \| \Lambda^s_{\omega} v \|_{L_{[0,T]}^{\infty} L_x^2} \| \Lambda^s_{\omega} \chi \|_{L_{[0,T]}^2L_{|x|}^{\infty} L_{\omega}^2}^2,
\\
& \| \Lambda^s_{\omega} \nabla (v \chi^2) \|_{L_{[0,T]}^1L_x^2}
\lesssim \| \Lambda^s_{\omega} \nabla \chi \|_{L_{[0,T]}^{\infty} L_x^2} \| \Lambda^s_{\omega} \chi \|_{L_{[0,T]}^2L_{|x|}^{\infty} L_{\omega}^2} \| \Lambda^s_{\omega} v \|_{L_{[0,T]}^2L_{|x|}^{\infty} L_{\omega}^2} \\
& \hspace*{100pt} + \| \Lambda^s_{\omega} \nabla v \|_{L_{[0,T]}^{\infty} L_x^2} \| \Lambda^s_{\omega} \chi \|_{L_{[0,T]}^2L_{|x|}^{\infty} L_{\omega}^2}^2.
\end{align*}

Hence, we have
\begin{align*}
\| \Phi (v) \|_{X_{[0,T_{\ast}]}}
& \le C_0 \| \Lambda^s_{\omega} v_0 \|_{H^1} + C_1 \| v \|_{X_{[0,T_{\ast}]}}^3 \\
& \quad + C_1 \left( \| \Lambda^s_{\omega} \chi_0 \|_{H^1} + \| \Lambda^s_{\omega} \chi \|_{L_{[0,T_{\ast}]}^2 L_{|x|}^{\infty} L_{\omega}^2} \right) \| v \|_{X_{[0,T_{\ast}]}}^2 \\ 
& \quad + C_1 \left( \| \Lambda^s_{\omega} \chi_0 \|_{H^1} + \| \Lambda^s_{\omega} \chi \|_{L_{[0,T_{\ast}]}^2L_{|x|}^{\infty} L_{\omega}^2} \right) \| \Lambda^s_{\omega} \chi \|_{L_{[0,T_{\ast}]}^2L_{|x|}^{\infty} L_{\omega}^2} \| v \|_{X_{[0,T_{\ast}]}} \\
& \le C_0 \| \Lambda^s_{\omega} v_0 \|_{H^1} + C_1 \| v \|_{X_{[0,T_{\ast}]}}^3 \\
& \quad + C_1 \left( \| \Lambda^s_{\omega} \chi_0 \|_{H^1} + \| \Lambda^s_{\omega} \chi \|_{L_{[0,T_{\ast}]}^2 L_{|x|}^{\infty} L_{\omega}^2} \right) \| v \|_{X_{[0,T_{\ast}]}}^2 + \frac{1}{10} \| v \|_{X_{[0,T_{\ast}]}}.
\end{align*}
Then, $\Phi$ is a mapping from $B_{1} := \{ v \in X_{[0,T_{\ast}]} : \| v \|_{X_{[0,T_{\ast}]}} \le 2 C_0 \eps \}$ into itself because of \eqref{eq:eps}.

Let $v_1$ and $v_2$ be solutions to \eqref{eq:Dirac-v}.
The difference $v_1-v_2$ satisfies
\[
i \partial _t (v_1-v_2) + \D (v_1-v_2) + V(x) \beta (v_1-v_2) = F(v_1, \chi) - F(v_2,\chi), \quad
(v_1-v_2) (0,x) = 0.
\]
Accordingly, for $v_1, v_2 \in B_{1}$, we have
\begin{align*}
& \| \Phi (v_1) - \Phi (v_2) \|_{X_{[0,T_{\ast}]}} \\
& \le C_1 \Big( (\| v_1 \|_{X_{[0,T_{\ast}]}}+ \| v_2 \|_{X_{[0,T_{\ast}]}}) (\| \Lambda^s_{\omega} v_1 \|_{L_{[0,T_{\ast}]}^2L_{|x|}^{\infty} L_{\omega}^2} + \| \Lambda^s_{\omega} v_2 \|_{L_{[0,T_{\ast}]}^2L_{|x|}^{\infty} L_{\omega}^2}) \\
& \quad + (\| v_1 \|_{X_{[0,T_{\ast}]}}+ \| v_2 \|_{X_{[0,T_{\ast}]}}) ( \| \Lambda^s_{\omega} \chi_0 \|_{H^1} + \| \Lambda^s_{\omega} \chi \|_{L_{[0,T_{\ast}]}^2L_{|x|}^{\infty} L_{\omega}^2} ) \\
& \quad + ( \| \Lambda^s_{\omega} \chi_0 \|_{H^1} + \| \Lambda^s_{\omega} \chi \|_{L_{[0,T_{\ast}]}^2L_{|x|}^{\infty} L_{\omega}^2} ) \| \Lambda^s_{\omega} \chi \|_{L_{[0,T_{\ast}]}^2L_{|x|}^{\infty} L_{\omega}^2} \Big) \| v_1-v_2 \|_{X_{[0,T_{\ast}]}} \\
& \le \frac{1}{2} \| v_1-v_2 \|_{X_{[0,T_{\ast}]}}.
\end{align*}
Therefore, $\Phi : B_1 \mapsto B_1$ is a contraction mapping, and we obtain a unique solution $v$ to \eqref{eq:Dirac-v}.
Since the existence time $T_{\ast}$ depends only on $\chi _0$, we can extend the existence time to $[0, kT_{\ast}]$.
Indeed, setting $I_n := [(n-1)T_{\ast}, n T_{\ast}]$ for $n =1,2, \dots , k$, we have
\begin{align*}
\| \Phi (v) \|_{X_{I_n}}
& \le C_0 \| \Lambda^s_{\omega} v ((n-1)T_{\ast}) \|_{H^1} + C_1 \| v \|_{X_{I_n}}^3 \\
& \quad + C_1 \left( \| \Lambda^s_{\omega} \chi_0 \|_{H^1} + \| \Lambda^s_{\omega} \chi \|_{L_{I_n}^2L_{|x|}^{\infty} L_{\omega}^2} \right) \| v \|_{X_{I_n}}^2 \\ 
& \quad + C_1 \left( \| \Lambda^s_{\omega} \chi_0 \|_{H^1} + \| \Lambda^s_{\omega} \chi \|_{L_{I_n}^2L_{|x|}^{\infty} L_{\omega}^2} \right) \| \Lambda^s_{\omega} \chi \|_{L_{I_n}^2L_{|x|}^{\infty} L_{\omega}^2} \| v \|_{X_{I_n}} \\
& \le C_0 \| \Lambda^s_{\omega} v ((n-1)T_{\ast}) \|_{H^1} + C_1 \| v \|_{X_{I_n}}^3 \\
& \quad + C_1 \left( \| \Lambda^s_{\omega} \chi_0 \|_{H^1} + \| \Lambda^s_{\omega} \chi \|_{L_{I_n}^2L_{|x|}^{\infty} L_{\omega}^2} \right) \| v \|_{X_{I_n}}^2 + \frac{1}{10} \| v \|_{X_{I_n}}.
\end{align*}
Then, $\Phi$ is a mapping from $B_n := \{ v \in X_{I_n} : \| v \|_{X_{I_n}} \le (2C_0)^n \eps \}$ into itself because of \eqref{eq:eps}.
The estimate for the difference is similarly handled.
Hence, the existence of a unique solution $v$ to \eqref{eq:Dirac-v} follows from the contraction mapping theorem.
Thus, we obtain the unique solution $u$ on the time interval $[0,kT_{\ast}]$.
Similarly, we have
\begin{align*}
& \| \Phi (v) \|_{X_{[T^{\ast},\infty)}} \\
& \le C_0 \| \Lambda^s_{\omega} v(T^{\ast}) \|_{H^1} + C_1 \| v \|_{X_{[T^{\ast},\infty)}}^3 \\
& \quad + C_1 \left( \| \Lambda^s_{\omega} \chi_0 \|_{H^1} + \| \Lambda^s_{\omega} \chi \|_{L_{[T^{\ast},\infty)}^2 L_{|x|}^{\infty} L_{\omega}^2} \right) \| v \|_{X_{[T^{\ast},\infty)}}^2 \\ 
& \quad + C_1 \left( \| \Lambda^s_{\omega} \chi_0 \|_{H^1} + \| \Lambda^s_{\omega} \chi \|_{L_{[T^{\ast},\infty)}^2L_{|x|}^{\infty} L_{\omega}^2} \right) \| \Lambda^s_{\omega} \chi \|_{L_{[T^{\ast},\infty)}^2L_{|x|}^{\infty} L_{\omega}^2} \| v \|_{X_{[T^{\ast},\infty)}} \\
& \le C_0 \| \Lambda^s_{\omega} v(T^{\ast}) \|_{H^1} + C_1 \| v \|_{X_{[T^{\ast},\infty)}}^3 \\
& \quad + C_1 \left( \| \Lambda^s_{\omega} \chi_0 \|_{H^1} + \| \Lambda^s_{\omega} \chi \|_{L_{[T^{\ast},\infty)}^2 L_{|x|}^{\infty} L_{\omega}^2} \right) \| v \|_{X_{[T^{\ast},\infty)}}^2 + \frac{1}{10} \| v \|_{X_{[T^{\ast},\infty)}}.
\end{align*}
The estimate for the difference follows in the same manner.
Then, $\Phi$ is a contraction mapping from $B_{\infty} := \{ v \in X_{[T^{\ast},\infty)} : \| v \|_{X_{[T^{\ast},\infty)}} \le (2 C_0)^{k+1} \eps \}$ into itself because of \eqref{eq:eps}.

To show the scattering, we set
\[
v_+ := v_0 - i \int_0^{\infty} e^{-it'(\D + V)} F(v(t'),\chi (t')) dt',
\]
which satisfies $\Lambda^s_{\omega} v_+ \in H^1(\R^3)$ because $\Lambda^s_{\omega} F(v,\chi) \in L^1(\R; H^1(\R^3))$.
Then,
\begin{align*}
& \| \Lambda^s_{\omega} v(t) - \Lambda^s_{\omega} e^{it(\D + V)} v_+ \|_{H^1} \\
& \lesssim \left\| \Lambda^s_{\omega} \int_t^{\infty} e^{i(t-t')(\D + V)} F(v(t'), \chi (t')) dt' \right\|_{H^1} \\
& \lesssim \| \Lambda^s_{\omega} F(v, \chi ) \|_{L^1_{[t,\infty)}H^1} \\
& \lesssim \| \Lambda^s_{\omega} v \|_{L^2_{[t,\infty)} L_{|x|}^{\infty} L_{\omega}^2}^2 \| \Lambda^s_{\omega} v \|_{L^{\infty}_{[t,\infty)} H^1} \\
& \quad + \left( \| \Lambda^s_{\omega} \chi_0 \|_{H^1} + \| \Lambda^s_{\omega} \chi \|_{L_{[t,\infty)}^2 L_{|x|}^{\infty} L_{\omega}^2} \right) \| \Lambda^s_{\omega} v \|_{L^2_{[t,\infty)} L_{|x|}^{\infty} L_{\omega}^2} \| \Lambda^s_{\omega} v \|_{L^{\infty}_{[t,\infty)} H^1} \\ 
& \quad + \left( \| \Lambda^s_{\omega} \chi_0 \|_{H^1} + \| \Lambda^s_{\omega} \chi \|_{L_{[t,\infty)}^2L_{|x|}^{\infty} L_{\omega}^2} \right) \| \Lambda^s_{\omega} \chi \|_{L_{[t,\infty)}^2L_{|x|}^{\infty} L_{\omega}^2} \| \Lambda^s_{\omega} v \|_{L^{\infty}_{[t,\infty)} H^1}
\end{align*}
for any $t>0$.
Therefore,
\[
\lim_{t \rightarrow \infty} \| \Lambda^s_{\omega} v(t) - \Lambda^s_{\omega} e^{it(\D + V)} v_+ \|_{H^1} =0.
\]
From $\chi (t)= e^{it(\D + V)} \chi_0$, setting $u_+ := v_+ + \chi_0$, we obtain the desired result.

\mbox{}

\section*{Acknowledgment}

The work of the second author was supported by JSPS KAKENHI Grant number JP 16K17624.

% b_f_post
%%% >>> BIBLATEX:
% \printbibliography
%%% >>> BIBTEX: (cancellare \usepackage{biblatex})
% \bibliography{/Users/piero/.bib/bibliodatabase.bib}
% \bibliographystyle{abbrv}
%%% >>> entrambe si possono usare contemporaneamente a:
% \begin{\thebibliography}{10}  DATI  \end{thebiblography}

\end{document}